\numberwithin{equation}{section}
\newenvironment{subroutine}[1][htb]{%
    \renewcommand{\ALG@name}{Sub-Routine}
   \begin{algorithm}[#1]%
  }{\end{algorithm}}
\numberwithin{equation}{section}
\newtheorem{theorem}{Theorem}[section]
\newtheorem{corollary}[theorem]{Corollary}
\newtheorem{definition}[theorem]{Definition}
\newtheorem{proposition}[theorem]{Proposition}
\newtheorem{lemma}[theorem]{Lemma}
\newtheorem{remark}[theorem]{Remark}
\newcommand{\keywords}[1]{\textbf{\textit{Keywords ---}} #1}
\newcommand{\R}{\mathbb{R}}
\newcommand{\N}{\mathbb{N}}
\DeclareMathOperator*{\argmin}{\arg\!\min}
\newcommand{\robd}{{\textsc{robd}}}
\newcommand{\opt}{{\textsc{opt}}}
\newcommand{\acord}{{\textsc{acord}}}
\newcommand{\lpc}{{\textsc{lpc}}}
\newcommand{\ftm}{{\textsc{ftm}}}
\newcommand{\loc}{{\textsc{local}}}
\newcommand{\dospa}{{\textsc{d-ospa}}}
\newcommand{\ftpl}{{\textsc{ftpl}}}
\newcommand{\am}{{\textsc{am}}}
\newcommand{\acordcaps}{{\textsc{ACORD}}}
\newcommand{\lpccaps}{{\textsc{LPC}}}
\newcommand{\alg}{{\textsc{alg}}}
\begin{document}

\title{Optimal Decentralized Smoothed Online Convex Optimization\footnote{This work was partially supported by the NSF grants CIF-2113027 and CPS-2240982. }}
\date{}

\author[1]{Neelkamal Bhuyan\thanks{Email: \href{mailto:nbhuyan3@gatech.edu}{nbhuyan3@gatech.edu}}\textsuperscript{,}}
\author[1]{Debankur Mukherjee}
\author[2]{Adam Wierman}

\affil[1]{Georgia Institute of Technology}
\affil[2]{California Institute of Technology}

\maketitle
\keywords{Online Algorithms, Decentralized Optimization, Dynamic Communication Network, Competitive Analysis, Worst-case Guarantees}

\begin{abstract}
We study the multi-agent Smoothed Online Convex Optimization (SOCO) problem, where $N$ agents interact through a communication graph. In each round, each agent~$i$ receives a strongly convex hitting cost function $f^i_t$  in an online fashion and selects an action $x^i_t \in \mathbb{R}^d$. The objective is to minimize the global cumulative cost, which includes the sum of individual hitting costs $f^i_t(x^i_t)$, a temporal ``switching cost'' for changing decisions, and a spatial ``dissimilarity cost’’ that penalizes deviations in decisions among neighboring agents. 
We propose the first truly decentralized algorithm \acord{} for multi-agent SOCO that provably exhibits asymptotic optimality. Our approach allows each agent to operate using only local information from its immediate neighbors in the graph. For finite-time performance, we establish that the optimality gap in the competitive ratio decreases with time horizon $T$ and can be conveniently tuned based on the per-round computation available to each agent. Our algorithm benefits from a provably scalable computational complexity that depends only logarithmically on the number of agents and almost linearly on their degree within the graph. Moreover, our results hold even when the communication graph changes arbitrarily and adaptively over time. Finally, \acord{}, by virtue of its asymptotic-optimality, is shown to be provably superior to the state-of-the-art \lpc{} algorithm, while exhibiting much lower computational complexity. Extensive numerical experiments across various network topologies further corroborate our theoretical claims.
\end{abstract}
\section{Introduction}\label{sec:intro}
We study a class of multi-agent smoothed online convex optimization (SOCO) problems where each agent $i \in \{1,\ldots, N\} = [N]$ has to take \textit{online} decision $x^i_t \in \R^d$ in response to strongly convex \textit{hitting costs} $f^i_t(\cdot)$ while keeping in mind that it is additionally penalized for a temporal \emph{switching cost} $\frac{1}{2}\|x^i_t - x^i_{t-1}\|_2^2$ and a spatial \emph{dissimilarity cost} 
$s_t^{(i,j)}\big(x^i_t, x^j_t\big)$, with respect to agent $j$'s action, when they share an edge in graph $\mathcal{G}_t = ([N],\mathcal{E}_t)$. 
The dissimilarity cost penalizes deviations in decisions among neighboring agents.
These costs emerge from the necessity for neighboring agents to coordinate their actions, and they are particularly important in formation control for unmanned aerial vehicles (UAVs) \cite{dimarogonas2008stability,krick2009stabilisation,kuriki2015formation}, dynamic multi-product pricing \cite{SongChintagunta06,LiuTimothy18}, graph based combinatorial optimization \cite{Hochba97,GamarnikGoldberg10,GamarnikGoldberg14} and economic team theory \cite{Marschak55,Marschak72}. We provide detailed explanations for some of these applications in Appendix \ref{appendix_sec: examples}.
We consider this problem over a finite time horizon $T$, where at time $t\in [T]$, agents can only communicate their actions amongst each other along the edges of the graph $\mathcal{G}_t$.


While the single-agent SOCO problem has received significant attention over the last decade due to its wide range of applications in data center management \cite{LinWiermanAndrew11,LinWierman12,LuAndrew13}, power systems \cite{NarayanaswamyGarg12,LuTuChau13,KimGiannakis17}, electrical vehicle charging \cite{GanTopcu13,KimMatthews15}, video transmission \cite{JosephVeciana12,ChenWierman2024} and chip thermal management \cite{ZaniniAtienza09,ZaniniAtienza10}, its decentralization has remained a crucial challenge for large-scale implementations.  
This dynamic multi-agent problem emerges in areas like power systems control \cite{MolzahnLow17,ShiGuannanWierman22}, formation control \cite{ChenZhongmin05,Kwang-Kyo15} and multi-product price optimization \cite{CaroGallien12,CandoganOzdaglar12,HansenDetlefsen19}. Appendix \ref{appendix_sec: lit_review} summarizes the literature in SOCO, online decentralized optimization and its applications.

Most works in offline decentralized optimization study coupling constraints ($x^i = x^j$ for an edge $(i,j)$)
\cite{ShiWotao14ADMM,MaNikolakopoulos18fast,MaNikolakopoulos18graph,MaNikolakopoulos18hybrid,lan2020communication,xin2020,Chunlei2019,kovalev2020optimal}. 
Online decentralized optimization too is focused on this idea \cite{AkbariLinder15,HosseiniChapman16,YiJohansson20,YuanProutiere21,XuanyuTamer21,JiangZhu21,CaoTamer21,ZhangLijun24} while also having other limitations in the form of static benchmarks, instead of dynamic, and bounded action spaces. We contrast these aspects in detail in Appendix \ref{appendix_sec: DOCO} by considering recent works like \cite{wang2023distributed}.

A prominent example of our multi-agent online set-up is swarm-control for Unmanned Aerial  Vehicles (UAVs) \cite{ben2008distributed,morbidi2011estimation,SherstjukUAV15,zavadskiy2018dynamics,seraj2020coordinated}. Such a problem involves a collection of $N$ UAVs traveling from point A to B in a desired formation while tackling external obstacles. The biggest difficulty here is formation control while ensuring local collision-avoidance. Simple consensus algorithms or constraints, as discussed in the previous paragraph, fail to take this into account \cite{hu2020formation}. Works like \cite{kuriki2015formation,dubay2018distributed,yasin2019formation,hu2020voronoi,yasin2020formation} explicitly use distance based penalty/thresholding in their formation control algorithms to prevent UAVs from crashing into each other. Further applications like dynamic multi-product pricing \cite{CandoganOzdaglar12}, decentralized battery management \cite{zhao2022distributed} and geographical load-balancing \cite{khalil2022renewable} have been explained in detail in Appendix \ref{appendix_sec: examples}.

The combination of dissimilarity costs and switching costs induces a complex spatio-temporal coupling of decisions across agents, which makes the design of decentralized algorithms particularly challenging.
It is not hard to see that a naive handling of this coupling, especially constraining agent actions' to be same, may lead to arbitrarily bad performance over the horizon $T$ (e.g., see Appendix~\ref{section: naive}).

To the best of our knowledge, the only work that attempts to provide a decentralized competitive algorithm in a related framework is~\cite{lin2022decentralized}, which proposed a variation of Model Predictive Control (MPC) called Localized Predictive Control (\lpc{}). However, their algorithm depends on perfect predictions of future cost functions and involves communication of infinite-dimensional hitting cost functions among all agents. This motivates the following question, which forms the basis of this paper:
\begin{quote}
    \textit{``Is it possible to design a scalable, decentralized online algorithm that matches the performance of the centralized optimal?"}
\end{quote}
\paragraph{Algorithmic Contributions.}  We design the first decentralized algorithm: \underline{A}lternating \underline{C}oupled \underline{O}nline \underline{R}egularized \underline{D}escent (\acord{}, Algorithm \ref{alg:ACORD}) that maintains a near-optimal competitive ratio (Theorem \ref{main_thm:CR}) without communication of hitting costs amongst the agents and without the use of predictions.  Further, \acord{} is computationally efficient, using only $d$-dimensional agent-local computations.
Additionally, we prove that \acord{} is \textit{asymptotically online-optimal} (Theorem \ref{main_thm: lower_bound_2}, Corollary \ref{corr: acord_asym_opt}), without requiring agents to communicate hitting costs. This makes it the first algorithm in the decentralized SOCO literature with \textit{zero inherent bias}.

We emphasize that \acord{}'s performance guarantees hold in  general and dynamic environments, including  heterogeneous hitting costs between agents, heterogeneous dissimilarity costs across edges of the graph, and \textit{dynamically changing graph} structures $\mathcal{G}_t.$ 
Our results also highlight the dependence of the performance guarantees for \acord{} on the graph properties.  In particular, the runtime complexity of \acord{} for $\mathcal{D}$-regular graphs is $\Theta\left(\mathcal{D}\log (N\mathcal{D})\right)$ (Theorem \ref{main_thm:graph_dependence}). The weak dependence on $N$ highlights the scalability of the algorithm, and the strong dependence on $\mathcal{D}$ indicates the dissimilarity's cost's influence.
\paragraph{Beating the State of the Art.} In addition to the aforementioned results, we perform an in-depth comparison to the state-of-the-art algorithm in this setting, \lpc{} \cite{lin2022decentralized}. Corollary \ref{corr:lpc_sub_opt} shows that \lpc{} can match \acord{} only in the limit of full-network access and future predictions over the entire horizon $T$. We further show that \acord{} provably utilizes much less resources in computation and in communication than \lpc{} (Corollary \ref{corr: resource_utlization}). Our numerical experiments in Section \ref{sec:num_exp} further back these results.
\paragraph{Analytical Contributions.} Analytical advancements, across multiple fronts have facilitated the aforementioned results. The key methodological contributions are the introduction of (i) ADMM-inspired auxiliary variables and (ii) block-iterative optimization, to the SOCO framework. Carefully implementing these mechanisms in an online set-up allows \acord{} to achieve arbitrarily near-optimal performance (Theorem \ref{main_thm:no_bias_CR}). As a novel algorithmic approach in the SOCO literature, we develop a broader analysis framework that quantifies performance guarantees for algorithms like \acord{} which employ numerical approximations (Theorem \ref{main_thm: approx_robd_cr}).
\section{Model and Preliminaries}\label{sec:model_prelims}
\subsection{A coupled online environment}
Consider a set of agents $[N] = \{1,\ldots,N\}$ and a finite time horizon $[T] = \{1,\ldots,T\}$. In every round $t \in [T]$, each agent $i$ receives a $\mu_i$-strongly convex hitting cost $f^i_t(\cdot)$ and in response, has to take an action $x^i_t \in \R^d$. Along with the hitting costs, agent $i$ needs to take into account its switching cost\footnote{Although, we consider squared $\ell_2$-norm, our methods easily extend to Bregman Divergences like in \cite{GoelLinWierman19}.} $c(x^i_t,x^{i}_{t-1}) = \frac{1}{2}\|x^i_t - x^i_{t-1}\|_2^2$ and a dissimilarity cost as further detailed below.  

\begin{definition}[Dissimilarity Costs]
    At each round $t$, if two agents $i$ and $j$ are \textit{coupled}, they are penalized for the mismatch between their actions $x^i_t$ and $x^j_t$ through an additional cost
    \begin{align}\label{eqn: dissimilarity_cost_defn}
        \begin{split}
            s_t&^{(i,j)}\left(x^i_t, x^j_t\right)  
            = \frac{\beta}{2}\left\|A_t^{(i,j)} x^i_t  - A^{(i,j)}_t x^j_t \right\|_2^2\qquad \ 
        \end{split}
    \end{align}
    where $A_t^{(i,j)} \in \R^{r_t \times d}$ ($r_t \geq d$) is full rank with singular values in $[\sqrt{m}$, $\sqrt{l}]$ and $\beta\geq0$. Only agents $i$ and $j$ have knowledge of $A_t^{(i,j)}$ at round $t$.
\end{definition}
The motivation to study dissimilarity costs of the form \eqref{eqn: dissimilarity_cost_defn} is to generalize such costs encountered in distributed online control \cite{DunbarMurray06}, cooperative multi-agent networks \cite{RezaMurray07}, coupled network games \cite{Grammatico17}, and multi-product pricing \cite{ChenZhiLong15}, including the vanilla squared-$\ell_2$ norm costs. Appendix \ref{appendix_sec: examples} documents the extensive use of dissimilarity costs of this form, across various domains.
\begin{remark}
    The matrix $A_t^{(i,j)}$ needs to have rank $d$ so that $s_t^{(i,j)}$ is not degenerate, that is 
    \begin{align}
        s_t^{(i,j)}\left(x^i_t, x^j_t\right) > 0 \text{ if }x^i_t \neq x^j_t.        
    \end{align}
\end{remark}
\begin{definition}[Adaptive Graph]
    Agents $[N]$ are coupled amongst each other through an undirected graph $\mathcal{G}_t = ([N],\mathcal{E}_t)$, where $\mathcal{E}_t$ is the set of edges. Note that $\mathcal{G}_t$ can change adaptively and adversarially across rounds and is not necessarily connected.
\end{definition}
For each pair of agents $(i,j)$ sharing an edge in $\mathcal{G}_t$, there is a dissimilarity cost $s^{(i,j)}_t$. Additionally, agents are allowed to interact only with their \textit{immediate} neighbors and can share only their actions (or lower dimensional entities). This can be contrasted with previous works \cite{lin2022decentralized} that involve exchange of functions across an $r$-hop neighborhood. 
\subsection{Performance measure}
The online sequence of actions $\alg{} = \left\{ \mathbf{x}_1,\ldots, \mathbf{x}_T \right\}$, where $\mathbf{x}_t = (x^1_t,\ldots,x^N_t)$ is the vector of decisions at time $t$, is aimed at minimizing the following cumulative cost over the horizon $T$,
\begin{align}\label{eqn: cumulative_obj}
    \text{Cost}_{\alg{}}[1,T] =& \sum_{t=1}^T \bigg\{\sum_{i=1}^N f^i_t(x^i_t) + \frac{1}{2}\|x^i_t - x^i_{t-1}\|_2^2 + \frac{\beta}{2}\sum_{(i,j) \in \mathcal{E}_t} \left\|A_t^{(i,j)} x^i_t - A^{(i,j)}_t x^j_t\right\|_2^2 \bigg\}
\end{align}
To evaluate the performance of the algorithm, the online sequence of actions is compared to the hindsight optimal sequence of actions $\opt{} = \left\{ \mathbf{x}^*_1,\ldots, \mathbf{x}^*_T \right\}$ that optimizes the above cumulative cost \textit{offline}.

\begin{definition}[Asymptotic Competitive Ratio]
    An online algorithm \alg{} is said to have an \textit{asymptotic competitive ratio} of $\text{CR}_{\alg{}}$ if it satisfies for all problem instances
    \begin{align}\label{eqn: CR_defn}
        \text{Cost}_{\alg{}}[1,T] \leq \text{CR}_{\alg{}} \cdot \text{Cost}_{\opt{}}[1,T] + o(1)
    \end{align}
    where the constants in $o(1)$ are instance-independent.
\end{definition}
Here the $o(1)$ term is an error that converges to $0$ with $T \to \infty$ and that does not depend on the input or the environment. It is worthwhile to emphasize that this is a stronger guarantee than the more commonly studied \emph{generalized competitive ratio}~\cite{LinWierman12,LuAndrew13,RuttenMukherjee23,AntoniadisSimon21,AntoniadisSimon23mixing,AntoniadisSimon23untrusted}, where the error term is allowed to be a non-vanishing fixed constant.

Our goal in this work is to design a decentralized algorithm that provably achieves near-optimal worst-case performance while allowing (i) communication of only \textit{local} actions across edges of the dynamic network and (ii) strictly local optimization within agents. This is a major leap from existing literature \cite{lin2022decentralized}, where one resorts to communication of entire hitting costs functions $f^i_t$, which can be infinite-dimensional, and computationally expensive neighborhood optimization. Keeping this in mind, we present our main results in the next section.
\begin{algorithm} [ht]
\caption{\acordcaps{}: Alternating Coupled Online Regularized Decent}\label{alg:ACORD}
\flushleft \textbf{Input:} strong convexity parameters $\{\mu_i\}_{i=1}^N$.
\flushleft \textbf{Initialize:} $\lambda_1^i = \frac{2}{1+\sqrt{1+\frac{4}{\mu_i}}}$ for $i \in \{1,\ldots, N\}$ 
\begin{algorithmic}
\For{$t = 1,2,\ldots,T$}
\State Receive $f_t^1(x), \ldots, f_t^N(x)$, $\beta$ and $\{A^e_t\}_{e\in \mathcal{E}_t}$ 
\For{$k \in \{1,\ldots, K_t\}$}
\State $ \left(x^i_t \right)_k \gets \argmin \limits_{x \in \R^d} \bigg\{ f_t^i(x) + \frac{\lambda_1^i}{2}\|x - x_{t-1}^{i}\|_2^2 +\beta \sum\limits_{\mathcal{E}_t \ni e \ni i}\|A_t^{e} x - A_t^{e} (z_e)_{k-1} \|_{2}^2 \bigg\}$ $\forall$ $i$ 
\State $(z_e)_k \gets \frac{(x^i_t)_k + (x^j_t)_k}{2}$ 
\EndFor
\State $x^i_t \gets \left(x^i_t \right)_{K_t}$ $\forall$ $i$
\EndFor
\end{algorithmic}
\end{algorithm}
\section{Main Results}\label{sec: main_results}
We now present the main contribution of this paper, the Alternating Coupled Online Regularized Descent (\acord{}) algorithm.
It is given above in Algorithm \ref{alg:ACORD}, and below we discuss its ramifications. 
\subsection{The \acordcaps{} algorithm}\label{subsec:acord}
Algorithm \ref{alg:ACORD} commands the following dynamics among the $N$ agents. In each round $t\in [T]$, agent $i$ receives $f_t^i(\cdot)$ along with the relevant dissimilarity costs $\{A^e_t\}_{e \ni i}$. Each agent $i$ maintains a collection of auxiliary variables $\{z_e\}_{e \ni i}$, where $e$ represents an edge that agent $i$ participates in. 

Now, each agent $i$ performs a pair of steps iteratively $K_t$ times. First it optimizes a completely local objective, with the shared auxiliary variables fixed, to get an estimate for its local action. Next, it communicates with its neighbors to update the auxiliary variables for each edge $(i,\cdot)$. This iterative local optimization and communication is much more efficient than neighborhood optimization done in \lpc{} \cite{lin2022decentralized}. We elaborate on this further in Section \ref{sec:lpc_comparison}.

We begin by addressing finite-iteration performance when there is a bound $M_f$ on the hitting costs and bounded diameter $M_s$ on the action space. While the performance is identical in the unbounded case, it requires one extra step, which we discuss at the end of this section. Consider,
\begin{align}\label{eqn:Kt_value}
    K_t &= \frac{\log \left(\frac{T^4 \cdot 128 \cdot\beta l N(M_f + M_s^2/2)(\max_i \mu_i + \lambda_1^i)}{(\sigma_t\min_i \lambda_1^i)^2} \right)}{\log\left( \frac{4\beta l}{4\beta l - \sigma_t} \right)} = \mathcal{O}(\log T)
\end{align}
where $\sigma_t$ is a function of strong-convexity parameters $\{\mu_1,\ldots,\mu_N\}$, $\beta$, and the graph $\mathcal{G}_t$. Setting $K_t$ as such gives the following asymptotic competitive ratio.
\begin{theorem}\label{main_thm:CR}
    \acord{} (Algorithm \ref{alg:ACORD}) along with $K_t$ as in \eqref{eqn:Kt_value} guarantees the following performance for any $T > 0$:
\begin{align*}
    \text{Cost}_{\acord{}}[1,T] \leq \underbrace{\left(\frac{\text{CR}_{*} + 1/2T^2}{1-1/2T^2} \right)}_{\text{CR}_\acord} \text{Cost}_{\opt{}}[1,T] + \frac{1+\beta (m^2/l)}{4T}
\end{align*}
where $\text{CR}_{*} = \frac{1}{2} + \frac{1}{2}\sqrt{1+\frac{4}{\min_i \mu_i}}$.
\end{theorem}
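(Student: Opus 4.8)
The plan is to reduce the analysis of \acord{} to two ingredients that should be developed separately: (i) a clean characterization of the ``ideal'' centralized online algorithm that \acord{} is approximating, together with its competitive ratio $\text{CR}_*$, and (ii) a quantitative perturbation bound showing that running only $K_t$ inner iterations of the ADMM-style alternating scheme produces an action vector that is within a controllably small distance of the ideal action. Concretely, I would first observe that if each agent solved, at every round $t$, the \emph{coupled} regularized problem $\mathbf{x}_t^\star = \argmin_{\mathbf{x}} \sum_i \big(f^i_t(x^i) + \tfrac{\lambda_1^i}{2}\|x^i - x^{i}_{t-1}\|_2^2\big) + \beta \sum_{e\in\mathcal{E}_t}\|A^e_t x^{e_1} - A^e_t x^{e_2}\|_2^2$ exactly, then this is precisely a multi-dimensional instance of Regularized Online Balanced Descent (\robd{}) with the per-agent regularization weights $\lambda_1^i$ chosen (as in the \texttt{Initialize} step) to be the optimal \robd{} parameter for a $\mu_i$-strongly convex hitting cost. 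The known \robd{} competitive-ratio analysis then yields exactly $\text{CR}_* = \tfrac12 + \tfrac12\sqrt{1+4/\min_i\mu_i}$, since the binding agent is the one with the smallest strong-convexity modulus. This step is essentially an invocation of the earlier ``approximate-\robd{}'' framework (Theorem~\ref{main_thm: approx_robd_cr}) in the exact case.

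Second, I would quantify the error from truncating the inner loop. The inner iteration is a Gauss--Seidel/ADMM-type fixed-point map: with the $z_e$ fixed, each agent's subproblem is strongly convex (modulus at least $\mu_i + \lambda_1^i$, plus the $\beta$-weighted dissimilarity term), and the $z_e$-update is an averaging step. The key is to show this map is a contraction in an appropriate norm with modulus $1 - \sigma_t/(4\beta l)$, where $\sigma_t$ captures how the strong convexity and graph structure (via the singular-value bounds $m,l$ on the $A^e_t$) interact — this is exactly the ratio appearing in the denominator $\log\big(4\beta l/(4\beta l - \sigma_t)\big)$ of \eqref{eqn:Kt_value}. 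Given geometric contraction, after $K_t$ iterations the iterate $(\mathbf{x}_t)_{K_t}$ satisfies $\|(\mathbf{x}_t)_{K_t} - \mathbf{x}_t^\star\|_2 \le (1-\sigma_t/4\beta l)^{K_t}\cdot(\text{initial error})$, and the boundedness assumptions ($M_f$, $M_s$) control the initial error by something polynomial in $N, \beta, l, M_f, M_s, \mu_i, \lambda_1^i$; plugging in the stated $K_t$ makes this per-round error at most of order $1/T^4$ (hence per-round action error $O(1/T^2)$).

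Third, I would feed this $O(1/T^2)$-per-round action perturbation into the approximate-\robd{} machinery (Theorem~\ref{main_thm: approx_robd_cr}) to convert a bound on action error into a bound on cost: a small deviation $\varepsilon_t$ in actions changes the hitting, switching, and dissimilarity costs by amounts controlled by $\varepsilon_t$ and $\varepsilon_t^2$ (the quadratic terms) plus cross terms that are bounded using Lipschitzness on the bounded domain; summing over $t\in[T]$ and using $\sum_t \varepsilon_t = O(1/T)$ gives the additive $\tfrac{1+\beta(m^2/l)}{4T}$ term and the multiplicative inflation of $\text{CR}_*$ to $(\text{CR}_* + 1/2T^2)/(1 - 1/2T^2)$ — the $1/2T^2$ slack being the price of approximation absorbed multiplicatively via an inequality of the form $\text{Cost}_\acord \le (1+\delta)\text{Cost}_{\mathbf{x}^\star} + (\text{err})$ and $\text{Cost}_{\mathbf{x}^\star}\le \text{CR}_*\,\text{Cost}_\opt$, rearranged.

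The main obstacle I anticipate is step two: establishing the contraction factor $1-\sigma_t/(4\beta l)$ with the \emph{correct} $\sigma_t$ and doing so uniformly over arbitrary (even disconnected, time-varying) graphs $\mathcal{G}_t$. The alternating scheme couples all agents through shared edge variables, so the contraction must be proved on the joint $(\mathbf{x}, \mathbf{z})$ space; one has to show the composition of the (block-separable) $\mathbf{x}$-minimization and the $\mathbf{z}$-averaging is a strict contraction, and track exactly how the strong-convexity moduli $\mu_i$ and the $A^e_t$-spectrum enter the rate — getting $4\beta l$ rather than a looser bound is what keeps $K_t = O(\log T)$ with a benign constant. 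A secondary subtlety is ensuring all the ``instance-independent $o(1)$'' bookkeeping genuinely holds: the initial-error bound must not hide a dependence on $T$ or on the adversary's choices, which is why the boundedness parameters $M_f, M_s$ (and, in the unbounded case, the extra projection/clipping step alluded to after the theorem) are needed.
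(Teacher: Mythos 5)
Your proposal follows essentially the same route as the paper's proof: (i) the exact coupled regularized problem is multi-agent \robd{} with per-agent weights $\lambda_1^i$ achieving $\text{CR}_*$, (ii) the alternating $(\mathbf{x},\mathbf{z})$ scheme contracts at rate $1-\sigma_t/(4\beta l)$ (the paper proves this as a function-value contraction via joint strong convexity plus $4\beta l$-smoothness in $\mathbf{z}$ alone, then converts to a distance bound), and (iii) the resulting per-round errors $\epsilon_1 = O(1/T^4)$ and $\epsilon_2 = O(1/T^2)$ are fed into the approximate-\robd{} framework of Theorem~\ref{main_thm: approx_robd_cr}. The only cosmetic deviations are that the paper controls the cost-perturbation cross terms through the potential function and Cauchy--Schwarz rather than any Lipschitzness on a bounded domain (which the paper deliberately avoids assuming), and the unbounded-case fix is a consensus sub-routine that propagates the initial objective value $\mathbbm{F}_t((\mathbf{x}_t)_0,\mathbf{z}_0)$ rather than a projection or clipping step.
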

Here, $\text{CR}_{\acord{}}$ is bounded by $(2\text{CR}_{*}+1)$, converging to $\text{CR}_{*}$ for longer horizons. We defined $K_t$ as in \eqref{eqn:Kt_value} to highlight this converging trend to $\text{CR}_{*}$, with respect to $T$. In fact, this convergence is \textit{exponentially fast} with $K_t$, as remarked below, with detailed explanation in Appendix \ref{appendix_ssec: ACORD_CR} as a part of Theorem \ref{main_thm:CR}'s proof.
\begin{remark}\label{remark: error_term_decay}
For $\text{CR}_{\acord{}}$ to be $\epsilon$-close to $\text{CR}_{*}$, the following choice suffices
\begin{align}
    K_t = \mathcal{O}\left(\log \frac{1}{\epsilon}\right) \text{ } \forall \text{ } \epsilon>0.
\end{align}
\end{remark}
\subsection{Lower Bound and Asymptotic Optimality}
While there are lower bounds established for the centralized setup \cite{GoelLinWierman19}, it remains unclear if the multi-agent setup or the presence of dissimilarity costs permit a ``lower" baseline performance bound.
\begin{theorem}\label{main_thm: lower_bound_2}
    In the decentralized setting, consider the agents' hitting costs $\{f^i_t\}_{i,t}$ to be $\mu_i$-strongly convex and the switching costs to be squared $\ell_2$-norm. Further, consider any dissimilarity cost that is convex in the actions. For weight $\beta > 0$, the competitive ratio is lower bounded as 
    \begin{align}
        \text{CR}_{ALG} \geq \frac{1}{2} + \frac{1}{2}\sqrt{1+\frac{4}{\min_i \mu_i}} = \text{CR}_{*}
    \end{align}
    with no helping effect from the dissimilarity cost.
\end{theorem}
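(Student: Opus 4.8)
The plan is to derive the bound by a reduction to the classical single-agent SOCO lower bound, leveraging the adversary's control over the graph to render every dissimilarity term inactive. Let $i^\star\in\argmin_i\mu_i$ and write $\mu_\star:=\mu_{i^\star}=\min_i\mu_i$. I would use the following adversarial instance: in each round $t$ choose $\mathcal{G}_t$ so that $i^\star$ has no incident edge (legitimate, since $\mathcal{G}_t$ need not be connected, and the remaining graph together with its dissimilarity matrices may be chosen arbitrarily subject only to the stated rank/singular-value constraints, for any fixed $\beta>0$); assign to every agent $j\neq i^\star$ the hitting cost $f^j_t(x)=\tfrac{\mu_j}{2}\|x\|_2^2$ with initial state $x^j_0=0$; and present to agent $i^\star$ the standard worst-case single-agent instance of \cite{GoelLinWierman19} for parameter $\mu_\star$, whose offline optimal cost grows unboundedly with $T$.

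The reduction then rests on two facts. First, the offline optimum of the coupled objective \eqref{eqn: cumulative_obj} on this instance equals the single-agent offline optimum of agent $i^\star$. Indeed, since $i^\star$ is isolated, \eqref{eqn: cumulative_obj} decomposes into (a) a term involving only $\{x^{i^\star}_t\}_t$, which is exactly the single-agent objective for $i^\star$, plus (b) a nonnegative term involving only the actions of the agents $j\neq i^\star$; the latter is driven to $0$ by playing $x^j_t\equiv 0$, which simultaneously zeroes every dissimilarity cost among those agents. Hence $\opt{}$ plays the single-agent optimum at $i^\star$ and $0$ elsewhere, so $\text{Cost}_{\opt{}}[1,T]$ equals agent $i^\star$'s single-agent offline optimum \emph{independently of $\beta$ and of the dissimilarity costs} — this is precisely the claimed absence of a helping effect.

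Second, the online cost is bounded below by agent $i^\star$'s hitting-plus-switching cost along a valid online trajectory. Discarding the nonnegative dissimilarity terms and the nonnegative costs of agents $j\neq i^\star$ gives $\text{Cost}_{\alg{}}[1,T]\ge\sum_{t=1}^T\big(f^{i^\star}_t(x^{i^\star}_t)+\tfrac12\|x^{i^\star}_t-x^{i^\star}_{t-1}\|_2^2\big)$; and because $i^\star$ has no neighbors it receives no communication, so $x^{i^\star}_t$ is a (possibly randomized) function of $f^{i^\star}_1,\dots,f^{i^\star}_t$ alone, i.e.\ $\{x^{i^\star}_t\}_t$ is an admissible online trajectory for $i^\star$'s single-agent problem. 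Invoking the single-agent lower bound of \cite{GoelLinWierman19} — any online trajectory on this instance family incurs hitting-plus-switching cost at least $(\text{CR}_{*}-\epsilon)$ times the single-agent offline optimum once $T\ge T_\epsilon$, with $\text{CR}_{*}=\tfrac12+\tfrac12\sqrt{1+4/\mu_\star}$ — and combining with the first fact yields $\text{Cost}_{\alg{}}[1,T]\ge(\text{CR}_{*}-\epsilon)\,\text{Cost}_{\opt{}}[1,T]$. Since $\text{Cost}_{\opt{}}[1,T]\to\infty$, any asymptotic-competitive-ratio guarantee $\text{Cost}_{\alg{}}[1,T]\le\text{CR}_{\alg{}}\,\text{Cost}_{\opt{}}[1,T]+o(1)$ forces $\text{CR}_{\alg{}}\ge\text{CR}_{*}-\epsilon$ on letting $T\to\infty$, hence $\text{CR}_{\alg{}}\ge\text{CR}_{*}$.

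I do not anticipate a deep obstacle. The two points needing care are: (i) checking that the proposed profile is genuinely offline-optimal for the \emph{coupled} objective — the substance of the ``no helping'' statement, which works because the bottleneck agent is placed outside the support of every active dissimilarity term; and (ii) the asymptotic-competitive-ratio bookkeeping, i.e.\ invoking the single-agent bound along a sequence of horizons with $\text{Cost}_{\opt{}}\to\infty$ so that the instance-independent $o(1)$ slack is absorbed. If one wants the instance to exhibit genuinely present dissimilarity costs, it suffices to additionally connect the agents $j\neq i^\star$ among themselves (their dissimilarity terms still vanish along $x^j_t\equiv 0$); and in the homogeneous case $\mu_i\equiv\mu$ one may alternatively keep the whole graph connected and take the symmetric profile $x^i_t\equiv p_t$ for a suitable single-agent hard trajectory $p$, since all dissimilarity terms vanish along it while each agent's individual cost is still bounded below by $\text{CR}_{*}$ times its share of $\opt{}$.
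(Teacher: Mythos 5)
Your proof is correct, but it takes a genuinely different route from the paper's. The paper's construction for the $\beta>0$ case is \emph{symmetric}: it sets all $\mu_i=\mu$, gives every agent the identical hard sequence ($\frac{\mu}{2}x^2$ for $T$ rounds, then a steep cost at $1$), and exploits the fact that any dissimilarity cost of the admissible form vanishes along the diagonal $x^1=\dots=x^N$, so that both the algorithm and the adversary effectively run $N$ parallel copies of the single-agent instance of Goel--Lin--Wierman; a separate construction (with $\beta=0$) handles heterogeneous $\mu_i$ by making the non-bottleneck agents' costs stationary. You instead exploit the adversary's control over $\mathcal{G}_t$ to \emph{isolate} the bottleneck agent $i^\star$ and reduce directly to the single-agent bound, which is equally legitimate (the graph is adversarial and need not be connected) and has the advantage of covering the heterogeneous-$\mu_i$, $\beta>0$ case in one construction. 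The trade-off is that the paper's symmetric instance establishes the stronger qualitative point that the lower bound persists even when the graph is fixed, dense, and every dissimilarity term is ``active,'' whereas your primary construction makes the coupling vacuous at $i^\star$ by deleting its edges (you do note the symmetric variant as a fallback). One small inaccuracy: the offline optimal cost of the standard single-agent hard instance does \emph{not} grow unboundedly with $T$ --- it converges to the constant $\frac{1}{2}\lim_T a_T=\frac{-\mu_\star+\sqrt{\mu_\star^2+4\mu_\star}}{4}>0$ --- so your final step should not appeal to $\text{Cost}_{\opt}[1,T]\to\infty$. The conclusion still closes, because the additive slack in the asymptotic-competitive-ratio definition is $o(1)$ while $\text{Cost}_{\opt}$ is bounded away from zero, so dividing and letting $T\to\infty$ (then $\mu'\to\infty$) still forces $\text{CR}_{\alg}\geq\text{CR}_*$; alternatively, concatenating independent copies of the block makes the offline cost grow linearly if one insists on that phrasing.
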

We find that the adversary can still devise a strategy to circumvent the dissimilarity cost even in the case $\beta > 0$. The worst-case instance sets all $\mu_i = \mu$ and provides the same minimizer to all agents. Creating symmetry across the agents, the adversary takes advantage of a general characteristic of the dissimilarity cost: \textit{degeneracy along} $x^1 = \ldots = x^N$. This tactic allows the adversary to have a zero dissimilarity cost.
\begin{remark}
    In absence of dissimilarity costs, constraining the adversary through heterogeneity, by enforcing $\{\mu_i\}_i$ to be distinct, does not change the lower bound. Now, the adversary exploits the agent with the smallest strong-convexity parameter and forces the other agents to remain stationary in space, forcing any \alg{} to make poor decisions.
\end{remark}
We detail the exact worst-case instances and the dynamics that lead to these two results in Appendix \ref{appendix_sec: lower_bounds}. Theorem \ref{main_thm:CR}, in light of the aforementioned lower bound, reveals \acord{}'s following asymptotic optimality,
\begin{corollary}\label{corr: acord_asym_opt}
    \acord{} (Algorithm \ref{alg:ACORD}) with $K_t \to \infty$ produces the \textbf{online optimal} sequence of actions.    
\end{corollary}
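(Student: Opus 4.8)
The plan is to combine three ingredients: (i) that letting $K_t\to\infty$ makes the inner block-iterative loop of Algorithm \ref{alg:ACORD} converge, so that \acord{} collapses to a clean online rule which at each round \emph{exactly} solves a joint regularized minimization; (ii) that the competitive-ratio guarantee of Theorem \ref{main_thm:CR} degrades gracefully in $K_t$, with both the multiplicative gap over $\text{CR}_{*}$ and the additive error vanishing as $K_t\to\infty$; and (iii) the matching lower bound of Theorem \ref{main_thm: lower_bound_2}.

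First I would make precise what the limit $K_t\to\infty$ produces. Each inner subproblem
\[
\left(x^i_t\right)_k \leftarrow \argmin_{x\in\R^d}\Big\{ f_t^i(x) + \tfrac{\lambda_1^i}{2}\norm{x - x_{t-1}^i}^2 + \beta\!\!\sum_{\mathcal{E}_t\ni e\ni i}\!\!\norm{A_t^e x - A_t^e (z_e)_{k-1}}^2\Big\}
\]
is $(\mu_i+\lambda_1^i)$-strongly convex, hence has a unique minimizer, and together with the averaging step $(z_e)_k = \tfrac12\big((x^i_t)_k+(x^j_t)_k\big)$ the map $(z_e)_{k-1}\mapsto(z_e)_k$ is a contraction with the factor $1-\sigma_t/(4\beta l)$ that appears in \eqref{eqn:Kt_value} — this is exactly the contraction estimate underlying the choice of $K_t$ in the proof of Theorem \ref{main_thm:CR}. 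Therefore $(x^i_t)_k\to x^{i,\infty}_t$ and $(z_e)_k\to z^{\infty}_e$ geometrically, and the limit satisfies the coupled stationarity condition
\[
\nabla f_t^i\big(x^{i,\infty}_t\big) + \lambda_1^i\big(x^{i,\infty}_t - x^i_{t-1}\big) + \beta\!\!\sum_{\mathcal{E}_t\ni e\ni i}\!\!\big(A_t^e\big)^\top A_t^e\big(x^{i,\infty}_t - x^{j,\infty}_t\big) = 0 \quad \forall i,
\]
which is precisely the first-order condition of the joint per-round problem $\min_{\mathbf{x}} \sum_i\{f^i_t(x^i) + \tfrac{\lambda_1^i}{2}\norm{x^i-x^i_{t-1}}^2\} + \tfrac{\beta}{2}\sum_{e\in\mathcal{E}_t}\norm{A_t^e x^i - A_t^e x^j}^2$. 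Thus \acord{} with $K_t\to\infty$ is a well-defined online algorithm: the ``coupled \robd{}'' rule that plays this joint minimizer every round.

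Next I would pass to the limit in Theorem \ref{main_thm:CR}. The displayed bound there is the instance — obtained by substituting \eqref{eqn:Kt_value} into its proof — of a more general estimate $\text{Cost}_{\acord{}}[1,T] \le \tfrac{\text{CR}_{*}+\delta(K_t)}{1-\delta(K_t)}\,\text{Cost}_{\opt{}}[1,T] + \gamma(K_t)$ in which $\delta(K_t)$ and $\gamma(K_t)$ decay geometrically in $K_t$; Remark \ref{remark: error_term_decay} records this rate, and the same mechanism is the content of Theorem \ref{main_thm: approx_robd_cr} on algorithms that employ numerical approximations. Letting $K_t\to\infty$ sends $\delta(K_t),\gamma(K_t)\to 0$, so that for every horizon $T$ and every instance $\text{Cost}_{\acord{}}[1,T] \le \text{CR}_{*}\cdot\text{Cost}_{\opt{}}[1,T]$ in the limit; i.e.\ the limiting algorithm has competitive ratio at most $\text{CR}_{*} = \tfrac12 + \tfrac12\sqrt{1+4/\min_i\mu_i}$. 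Finally, Theorem \ref{main_thm: lower_bound_2} shows every online algorithm in this setting — with $\mu_i$-strongly convex hitting costs, squared-$\ell_2$ switching costs, and any convex dissimilarity cost, $\beta>0$ — has competitive ratio at least $\text{CR}_{*}$. Combining, \acord{} with $K_t\to\infty$ attains exactly $\text{CR}_{*}$ and is therefore online-optimal, i.e.\ it produces an online-optimal action sequence.

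I expect the main obstacle to be the first step: rigorously showing the inner loop converges to the exact joint minimizer and, in tandem, that the competitive-ratio estimate of Theorem \ref{main_thm:CR} is genuinely an exact $\text{CR}_{*}$ in the limit rather than merely $\text{CR}_{*}+o(1)$. This requires extracting the explicit $K_t$-dependence hidden inside the proof of Theorem \ref{main_thm:CR} (through the perturbation/approximation analysis of Theorem \ref{main_thm: approx_robd_cr}) and checking that the additive term truly vanishes; one should also be mindful that ``the online-optimal sequence'' is meant in the minimax/competitive sense, since attaining the optimal competitive ratio need not pin down a unique trajectory a priori — here it does, because the $K_t\to\infty$ limit of Algorithm \ref{alg:ACORD} is the single well-defined coupled-\robd{} trajectory identified above.
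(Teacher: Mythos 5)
Your proposal is correct and follows essentially the paper's own route: the limit $K_t\to\infty$ of the alternating-minimization loop recovers the exact solution of the decoupled per-round objective, whose stationarity conditions coincide with those of the coupled \robd{} problem (Theorem \ref{main_thm:no_bias_CR} via Proposition \ref{prop: alt_min_covergence}), the error terms $\epsilon_1,\epsilon_2$ in the approximation framework vanish so that $\text{CR}_{\acord{}}\to\text{CR}_{*}$, and Theorem \ref{main_thm: lower_bound_2} supplies the matching lower bound. The only minor imprecision is that the contraction factor $1-\sigma_t/(4\beta l)$ applies to the objective values $\mathbbm{F}_t$ rather than directly to the iterate map on $(z_e)_k$, but strong convexity transfers this to iterate convergence, so nothing is lost.
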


The expression of $\text{CR}_\acord$ with $K_t \to \infty$ returns the lower bound we obtained in Theorem \ref{main_thm: lower_bound_2}. \acord{} is the first algorithm in decentralized SOCO literature exhibiting such tight guarantees, \textit{with the correct constants}, while relying only on local computations. Existing algorithms like \lpc{} \cite{lin2022decentralized} rely on exact future predictions and neighborhood optimization while not being able to close the gap with the online optimal (more on this in Sections \ref{sec:lpc_comparison} and \ref{sec:num_exp}). 

\subsection{Graph Dependence}
We study the effect of graph structure on \acord{} by analyzing its performance when $\mathcal{G}_t$ belongs to the class of $\mathcal{D}$-regular graphs. 
\begin{definition}
    An undirected graph $\mathcal{G}_t = ([N],\mathcal{E}_t)$ is said to $\mathcal{D}$-regular if each node has exactly $\mathcal{D}$ neighbors. Note that $\mathcal{D}$-regularity does not necessarily imply connectivity.
\end{definition}
Here, a clear and direct relationship can be established between the \textit{required} number of iterations $K_t$ and the common degree $\mathcal{D}$, as stated below.
\begin{theorem}\label{main_thm:graph_dependence}
    For $\mu_i = \mu$ and $\mathcal{D}$-regular graphs $\mathcal{G}_t$, $K_t$ scales \textbf{tightly} as $$K_t = \Theta\left(\mathcal{D}\log \left(N\mathcal{D}^2 T^4 \right)\right) \text{ } \forall \text{ } \mathcal{D} \geq 2,$$ for the performance guarantee in Theorem \ref{main_thm:CR}, where $\Theta(\cdot)$ is independent of $\mathcal{D}$, $N$ and $T$.
\end{theorem}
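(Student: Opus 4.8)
The plan is to read off the $\mathcal{D}$-dependence directly from the closed form \eqref{eqn:Kt_value} of $K_t$. When $\mu_i = \mu$ for every $i$ we have $\lambda_1^i\equiv\lambda_1 = \tfrac{2}{1+\sqrt{1+4/\mu}}$, so $\min_i\lambda_1^i=\max_i\lambda_1^i=\lambda_1$ and $\max_i\mu_i=\mu$, and \eqref{eqn:Kt_value} becomes
\[
K_t \;=\; \frac{\log\!\left(\dfrac{128\,\beta l\,N\,(M_f+M_s^2/2)\,(\mu+\lambda_1)\,T^4}{\sigma_t^2\,\lambda_1^2}\right)}{\log\!\left(\dfrac{4\beta l}{4\beta l-\sigma_t}\right)} .
\]
Hence the statement reduces entirely to showing that the graph-dependent parameter $\sigma_t$ — which, in the analysis behind Theorem \ref{main_thm:CR}, is the strong-convexity/contraction modulus of \acord{}'s block-iterative inner loop — satisfies $\sigma_t = \Theta(1/\mathcal{D})$ for every $\mathcal{D}$-regular graph $\mathcal{G}_t$, with hidden constants depending only on the fixed parameters $\mu,\beta,m,l$ and not on $\mathcal{D}$, $N$, or $T$. (The hypothesis $\mathcal{D}\ge2$ is exactly what keeps $\sigma_t$ bounded away from $4\beta l$, so that the display above is well-posed and $-\log(1-\sigma_t/(4\beta l))=\Theta(\sigma_t)$ with constants independent of $\mathcal{D}$.)

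To get $\sigma_t=\Theta(1/\mathcal{D})$, note that on a $\mathcal{D}$-regular graph each per-agent subproblem $\argmin_x\{f_t^i(x)+\tfrac{\lambda_1}{2}\|x-x_{t-1}^i\|^2+\beta\sum_{e\ni i}\|A_t^e x - A_t^e z_e\|^2\}$ aggregates exactly $\mathcal{D}$ edge terms, so the Hessian of its coupling part, $2\beta\sum_{e\ni i}(A_t^e)^\top A_t^e$, has spectrum contained in $[2\beta m\mathcal{D},\,2\beta l\mathcal{D}]$, both bounds achievable. Feeding these spectral bounds into the contraction analysis of the splitting step $z_e\gets(x^i+x^j)/2$ from the proof of Theorem \ref{main_thm:CR} yields $\sigma_t=\Theta(1/\mathcal{D})$; intuitively, the $1/\mathcal{D}$ arises because an agent's update must balance all $\mathcal{D}$ incident edges at once, diluting the per-edge effective progress. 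Since $\sigma_t\asymp1/\mathcal{D}$ forces the denominator of the display to be $\Theta(1/\mathcal{D})$ and, because $\sigma_t^{-2}\asymp\mathcal{D}^2$, makes the argument of the numerator's logarithm $\Theta(N\mathcal{D}^2T^4)$, we obtain $K_t=\Theta\!\left(\mathcal{D}\log(N\mathcal{D}^2T^4)\right)$; combined with Theorem \ref{main_thm:CR}, this choice already delivers the claimed competitive ratio, which is the ``$O$'' half.

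For the ``$\Omega$'' half (tightness) I would exhibit an explicit extremal $\mathcal{D}$-regular family — a disjoint union of cliques $K_{\mathcal{D}+1}$ with $A_t^e=I$ and equal quadratic hitting costs within each clique — on which $\sigma_t=\Theta(1/\mathcal{D})$ is attained, i.e.\ the inner loop provably contracts no faster than $1-c/\mathcal{D}$. Coupled with the error-propagation bound from the proof of Theorem \ref{main_thm:CR} — which shows that maintaining $\text{Cost}_{\acord{}}[1,T]\le\text{CR}_{\acord}\cdot\text{Cost}_{\opt{}}[1,T]+\tfrac{1+\beta(m^2/l)}{4T}$ requires driving the per-round optimization residual below $\Theta\!\big((\sigma_t\lambda_1)^2/(\beta lN(\mu+\lambda_1)T^4)\big)=\Theta(1/(N\mathcal{D}^2T^4))$ up to fixed constants — this forces the inner-loop contraction to run long enough that $K_t\ge\Omega\!\left(\mathcal{D}\log(N\mathcal{D}^2T^4)\right)$ on that family, so no asymptotically smaller $K_t$ can suffice.

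I expect the tight two-sided control of $\sigma_t$ to be the crux — in particular the lower-bound direction, where one must (i) show that some $\mathcal{D}$-regular graph genuinely slows \acord{}'s ADMM-type inner loop to contraction $1-\Theta(1/\mathcal{D})$ (so that regularity and clique symmetry do not secretly accelerate convergence), and (ii) confirm that the competitive-ratio guarantee of Theorem \ref{main_thm:CR} really does demand the residual to reach the $\Theta(1/(N\mathcal{D}^2T^4))$ scale. The ``$O$'' direction, by contrast, is essentially a corollary of Theorem \ref{main_thm:CR} once the spectral sandwich $[2\beta m\mathcal{D},2\beta l\mathcal{D}]$ on the coupling Hessian is in place.
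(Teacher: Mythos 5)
Your reduction of the theorem to the claim $\sigma_t=\Theta(1/\mathcal{D})$, and the final algebra converting that into $K_t=\Theta(\mathcal{D}\log(N\mathcal{D}^2T^4))$ via $-1/\log(1-\sigma_t/(4\beta l))=\Theta(1/\sigma_t)$, both match the paper and are correct. The gap is that you never actually establish $\sigma_t=\Theta(1/\mathcal{D})$, and the justification you offer points in the wrong direction. The spectral sandwich $[2\beta m\mathcal{D},\,2\beta l\mathcal{D}]$ on the per-agent coupling Hessian $2\beta\sum_{e\ni i}(A_t^e)^\top A_t^e$ would, if anything, suggest that curvature \emph{grows} with $\mathcal{D}$; it says nothing about the strong-convexity modulus of the \emph{joint} function $\mathbbm{F}_t(\mathbf{x},\mathbf{z})$, which is what $\sigma_t$ is. The paper's argument is an exact eigenvalue computation of the block matrix $M_1=\left[\begin{smallmatrix} (\mu+\lambda_1+2\beta m\mathcal{D})I & 2\beta m\mathbf{G}_z\\ 2\beta m\mathbf{G}_z^\top & 4\beta m I\end{smallmatrix}\right]$ via the SVD of the agent--edge incidence operator $\mathbf{G}_z$: the smallest eigenvalue is $\tfrac12\bigl(a+4\beta m-\sqrt{(4\beta m-a)^2+4(2\beta m)^2\|\mathbf{G}_z\|^2}\bigr)$ with $a=2\beta m(\mathcal{D}+\kappa)$, and the identity $\mathbf{G}_z\mathbf{G}_z^\top=\mathcal{D}I+\mathbf{A}^{G}$ plus Perron--Frobenius gives $\|\mathbf{G}_z\|^2=2\mathcal{D}$, so that after rationalizing, the $\Theta(\mathcal{D}^2)$ terms cancel exactly in the numerator ($16a\beta m-32(\beta m)^2\mathcal{D}=32(\beta m)^2\kappa$) and only the denominator retains the $\Theta(\mathcal{D})$ scaling. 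That cancellation is the entire content of $\sigma_t=\Theta(1/\mathcal{D})$; your heuristic about ``diluting per-edge progress'' does not substitute for it, and without it neither half of the $\Theta$ follows.

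Your tightness argument also diverges from what the theorem asserts. The paper's ``$\Omega$'' direction is not an adversarial instance: when $\mu_i\equiv\mu$ the bound $M\succeq M_1$ is an equality, so the same eigenvalue formula gives a matching \emph{upper} bound $\sigma_t\le 2(\mu+\lambda_1)/(\mathcal{D}+\kappa)$ for \emph{every} $\mathcal{D}$-regular graph, and hence the prescribed $K_t$ in \eqref{eqn:Kt_value} evaluates to $\Theta(\mathcal{D}\log(N\mathcal{D}^2T^4))$ on all of them. Your proposed route --- exhibiting a clique-union family and arguing that the competitive guarantee \emph{necessitates} a per-round residual of order $1/(N\mathcal{D}^2T^4)$ --- proves a different (and harder) statement: the paper's analysis only shows that residual level is \emph{sufficient} for Theorem \ref{main_thm:CR}, and no necessity argument is available in the paper or supplied by you. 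You should replace both the heuristic for $\sigma_t$ and the instance-based lower bound with the two-sided eigenvalue computation.
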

The above guarantee naturally extends to Remark \ref{remark: error_term_decay}, with $T^4$ replaced by $1/\epsilon^2$. The strong linear dependence on the graph’s edge density, indicated by the regular degree $\mathcal{D}$, and the weaker logarithmic dependence on the number of agents $N$ demonstrates that the coupling among agents via the graph $\mathcal{G}_t$ is the primary driving force behind algorithm design in this domain.
Consequently, the above analysis of \acord{} highlights the influence that the graph $\mathcal{G}_t$ has on decentralized SOCO.
\begin{remark}
    For general graphs, we find $K_t$ to loosely scale as
    \begin{align}
        K_t \propto \frac{\mathcal{D}_{\min}}{\frac{\mu + \lambda_1}{\beta} + \left(\mathcal{D}_{\min} - \sigma^{\mathcal{G}_t}_{\max}\right)}
    \end{align}
    where $\mathcal{D}_{\min}$ is the minimum degree and $\sigma^{\mathcal{G}_t}_{\max}$ is the maximum eigenvalue of unsigned Laplacian of the graph $\mathcal{G}_t$. Further ramifications on this dependence can be found in the proof of Theorem \ref{main_thm:graph_dependence} in Appendix \ref{appendix_sec: graph_dependence}.
\end{remark}
\subsection{Handling Unbounded Costs and Space}
In Section \ref{subsec:acord}, we asserted that Theorem \ref{main_thm:CR} remains unaffected by upper bounds on hitting costs and action space diameter. In this section, we formalize this.

Iterative algorithms for decentralized optimization, like ADMM \cite{ShiWotao14ADMM}, depend on the global starting point. The hyperparameter $K_t$ in Algorithm \ref{alg:ACORD} also exhibits such dependence,
\begin{align}\label{eqn: K_t_value_unbounded}
K_t = \frac{\log \left(\frac{T^4 \cdot 128 \cdot \beta l \mathbbm{F}_t((\mathbf{x}_{t})_0,\mathbf{z}_0)}{\left(\sigma_t\min_i \lambda_1^i\right)^2} \right)}{\log\left( \frac{4\beta l}{4\beta l - \sigma_t} \right)}.
\end{align}
All the agents must have the value of $\mathbbm{F}_t((\mathbf{x}_{t})_0,\mathbf{z}_0)$ to set $K_t$ at the start of round $t$. In the bounded case, this is a fixed quantity throughout the horizon and, hence, can be set preemptively, as stated in \eqref{eqn:Kt_value}. 

For the unbounded case, once each agent knows the above value of $K_t$ \eqref{eqn: K_t_value_unbounded}, the performance exhibited by the \acord{} algorithm is exactly as stated in Theorem \ref{main_thm:CR}. This is an attribute of our competitive analysis for \acord{} that leads to Theorem \ref{main_thm:CR}, which by default considers unbounded hitting costs and action space.
\setcounter{algorithm}{0}
\begin{subroutine} [ht]
\caption{NETWORK CRAWL}\label{alg:consensus}
\flushleft \textbf{Input}: Graph $\mathcal{G}_t$, $\left\{f^i_t(\mathbf{0}) + \frac{\lambda_1^i}{2}\|x^i_{t-1}\|_2^2 \right\}_{i=1}^N$
\flushleft \textbf{Initialize:} For each agent $i$, $Y^i_0 = \begin{bmatrix}
    0 & \ldots & \underbrace{\left(f^i_t(\mathbf{0}) + \frac{\lambda_1^i}{2}\|x^i_{t-1}\|_2^2 \right)}_i & \ldots & 0
\end{bmatrix}^T$
\begin{algorithmic}
\For{$m = 1,2,\ldots,D$}
    \State $Y^i_m \gets \text{Updated from }\{Y^i_{m-1}: i \in \mathcal{N}^i\}$
\EndFor
\State $K_t \gets \frac{\log \left(\frac{T^4 \cdot 32\beta l \left( \sum_{j=1}^N (Y^i_m)_j \right)\left(\max_i \mu_i + \lambda_1^i\right)}{\left(\sigma_t\min_i \lambda_1^i\right)^2} \right)}{\log\left( \frac{4\beta l}{4\beta l - \sigma_t} \right)}$ $\forall$ $i$
\end{algorithmic}
\end{subroutine}
This minor issue of $K_t$'s shared knowledge among agents can be solved by running the simple Sub-routine~\ref{alg:consensus}, as described below, before the iterative optimization starts. It will run for at most $D$ steps, where $D$ is the diameter of $\mathcal{G}_t$. It ends with all agents possessing $
    \mathbbm{F}_t((\mathbf{x}_{t})_0,\mathbf{z}_0) = \sum_{i=1}^N f^i_t(\mathbf{0}) + \frac{\lambda_1^i}{2}\|x^i_{t-1}\|_2^2,
$
and hence, $K_t$. This sub-routine handles unbounded hitting costs in unbounded decision space $\R^d$ while preserving \acord{}'s decentralized nature and the near-optimal competitive ratio in Theorem \ref{main_thm:CR}.

\section{\acordcaps{} vs. Model Predictive Approaches}\label{sec:lpc_comparison}

Our \acord{} algorithm is a significant leap from existing algorithms in decentralized online optimization literature. In this section, we highlight the drawbacks of the celebrated LPC approach via two aspects (i) \emph{achievable performance} and (ii) \emph{resource-efficiency}, by comparing it to \acord{}. We further highlight these through extensive numerical experiments in the next section.

The \lpc{} algorithm in \cite{lin2022decentralized} considers an online set-up closest to ours. The authors consider strongly convex hitting costs with Lipschitz gradients with switching costs and dissimilarity cost that are convex with Lipschitz gradients. A stationary graph $\mathcal{G}$ is assumed.
The environment considered in their work is one where each agent $i$ at round $t$ \textbf{communicates with an $\mathbf{r}$-hop neighborhood $\mathcal{N}_i^r$ around it} to access the following information, in addition to $f^i_t(\cdot)$: (i) exact future local hitting cost functions up to a prediction window of $k$ into the future, that is, $\{f^i_\tau\}_{\tau = t+1}^{t+k}$, (ii) exact future hitting cost functions of its entire $r$-hop neighborhood, that is, $\{f^j_\tau: j \in \mathcal{N}_i^r \backslash i, t \leq \tau \leq t+k\}$, where $\mathcal{N}_i^r = \{j \in [N]: d(i,j) \leq r$\} and, (iii) spatial cost functions of the entire sub-graph represented by the $r$-hop neighborhood.

Under this information access model, the \lpc{} algorithm solves the following minimization problem at each agent in each round,
\begin{align}\label{eqn: lpc formulation}
    \min \sum_{\tau = t}^{t+k-1} f_\tau^{\mathcal{N}_i^{r-1}}\left(\mathbf{x}_\tau^{\mathcal{N}_i^r}\right) + c_\tau^{\mathcal{N}_i^r}\left(\mathbf{x}_\tau^{\mathcal{N}_i^r},\mathbf{x}_{\tau-1}^{\mathcal{N}_i^r}\right)
\end{align}
over variables $\left\{\mathbf{x}_\tau^{\mathcal{N}_i^r}\right\}_{\tau=t}^{t+k-1}$, where $\mathbf{x}_\tau^{\mathcal{N}_i^r} = (x_\tau^i)_{i \in \mathcal{N}_i^r}$. The action taken by each agent is the $(t,i)^{th}$ variable, with the rest being discarded. Corollary 3.7 from \cite{lin2022decentralized}, stated below, summarizes \lpc{}'s performance and the overall effectiveness of this approach.
\begin{corollary}[3.7, \cite{lin2022decentralized}]\label{corr:lpc_sub_opt}
    Suppose the online optimal decentralized algorithm achieves a competitive ratio $c(k^*,r^*)$ with prediction horizon $k^*$ and communication radius $r^*$. As $k^* \to \infty$ and $r^* \to \infty$, \lpc{} achieves a competitive ratio at least as good as that of the online optimal decentralized algorithm when \lpc{} uses prediction horizon of $k = (4+o(1))k^*$ and communication radius $r = (16\Delta \log \Delta + o(1))r^*$, where $\Delta$ is largest degree in $\mathcal{G}$.
\end{corollary}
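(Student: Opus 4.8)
The statement is a verbatim restatement of Corollary 3.7 of \cite{lin2022decentralized}, so the plan is to reconstruct its two-part argument: isolate the sub-optimality of \lpc{} into the error caused by (i) truncating the prediction horizon at $k$ and (ii) truncating the communication radius at $r$, bound each geometrically, and then match each decay rate against what the online optimal decentralized algorithm can achieve with parameters $(k^*,r^*)$.

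First I would set up a per-step perturbation decomposition, writing $\text{Cost}_{\lpc{}}[1,T]-\text{Cost}_{\opt{}}[1,T]$ as a telescoping sum over $t$ in which each term measures how far the offline-optimal decision at $(t,i)$ moves when the cost functions outside the time window $[t,t+k-1]$ are replaced by \lpc{}'s implicit terminal penalty, and when the cost functions of agents outside the ball $\mathcal{N}_i^r$ are discarded. This reduces the corollary to two ``exponential decay of perturbations'' estimates, one temporal and one spatial, together with the observation that the online optimal decentralized algorithm's competitive ratio $c(k^*,r^*)$ is itself governed (via a matching lower bound for limited-prediction, limited-communication algorithms) by the same two phenomena.

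For the temporal estimate, strong convexity of the $f^i_\tau$ together with the quadratic switching cost implies that perturbing the inputs $\tau$ steps into the future perturbs the current optimal action by a factor $O(\gamma^\tau)$ for some $\gamma\in(0,1)$ depending only on the strong-convexity and smoothness constants; summing the geometric tail bounds \lpc{}'s temporal error by $O(\gamma^k)$, while the lower bound forces the online optimal with horizon $k^*$ to incur $\Omega(\bar\gamma^{\,k^*})$. Equating exponents yields $k=(4+o(1))k^*$, the constant $4$ being precisely the ratio between the upper- and lower-bound decay exponents.

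The spatial estimate is the crux and the step I expect to be the main obstacle. Here the perturbation propagates through the graph, and the number of agents at graph distance $s$ can grow like $\Delta^s$, so the contribution of the ``shell'' at radius $s$ is (per-hop contraction)$^s\cdot\Delta^s$, which is summable only once the per-hop contraction beats the branching factor $\Delta$. The work is to show, from the structure of the dissimilarity and switching costs in the first-order optimality conditions of \eqref{eqn: lpc formulation}, that the per-hop contraction is of the form $1-c/\Delta$ — so that $\Theta(\Delta)$ hops, not $O(1)$, are needed to contract by a constant — and that the shell count is genuinely $\Delta^s$ and no worse; carrying the constants through the resulting geometric sum produces the effective radius $r=(16\Delta\log\Delta+o(1))r^*$, where the $\log\Delta$ is the price of making the contraction dominate $\Delta$. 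Finally, combining the two estimates, with $k$ and $r$ chosen as above \lpc{}'s excess cost is no larger than that of the online optimal decentralized algorithm run at $(k^*,r^*)$ up to an $o(1)$ term, giving $\text{CR}_{\lpc{}}\le c(k^*,r^*)+o(1)$ and hence the claim in the limit $k^*,r^*\to\infty$.
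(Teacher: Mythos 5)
You should first note that the paper you are working from does not prove this statement at all: it is imported verbatim as Corollary 3.7 of \cite{lin2022decentralized} and used purely as a point of comparison against \acord{}, so there is no in-paper proof to measure your reconstruction against. Your sketch does capture the correct high-level strategy of the original proof in \cite{lin2022decentralized} --- a decomposition of \lpc{}'s excess cost into a temporal truncation error and a spatial truncation error, each controlled by an exponentially decaying perturbation bound, matched against a corresponding lower bound for any algorithm with prediction horizon $k^*$ and radius $r^*$.

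As a proof, however, the proposal has two genuine gaps. First, the entire second half of the argument --- the lower bound showing that \emph{any} decentralized online algorithm with parameters $(k^*,r^*)$ must incur error at least $\Omega(\bar\gamma^{\,k^*})$ temporally and an analogous quantity spatially --- is invoked in a single parenthetical ("via a matching lower bound") but never constructed. This is not a corollary of the upper bound; it requires building adversarial instances in which information beyond the horizon $k^*$ or beyond the ball of radius $r^*$ provably changes the optimal current action by a non-vanishing amount, and without it the comparison to $c(k^*,r^*)$ cannot be made, so the constants $4$ and $16\Delta\log\Delta$ (which are precisely the ratios of the upper-bound to lower-bound decay exponents) cannot be extracted. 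Second, the spatial estimate, which you correctly identify as the crux, is left as a conjecture: you posit that the per-hop contraction has the form $1-c/\Delta$ and that the shell at distance $s$ contributes $\Delta^s$ terms, but you do not derive the contraction rate from the first-order optimality conditions of the \lpc{} subproblem \eqref{eqn: lpc formulation}, nor verify that the product of branching and contraction is summable. Note also that this derivation relies on the hitting, switching, and dissimilarity costs all having Lipschitz gradients (an assumption the \lpc{} setting makes explicitly, as recalled in Section \ref{sec:lpc_comparison}), which your strong-convexity-only framing elides. What you have is a faithful roadmap of the argument in \cite{lin2022decentralized}, not a proof.
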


The above performance measure from \cite{lin2022decentralized} establishes that \lpc{} can only match the online-optimal when given \textit{unrestricted} information access across the network and the future horizon. Corollary \ref{corr: acord_asym_opt} already established that \acord{} is arbitrarily close to the online optimal while using only local and causal information. This suggests that \lpc{} can match \acord{}'s performance only in the limit of $k$ and $r$, more on which we will see in Section \ref{sec:num_exp}.

Next, we compare \acord{} and \lpc{} based on the resource-consumption and quantify it through the following result.

\begin{corollary}\label{corr: resource_utlization}
    \lpc{} with an $r$-hop communication radius and a future prediction window of $k$ is $\Omega \left(\frac{ |\mathcal{N}^r(\mathcal{D})|^3}{\mathcal{D}}k^3 \right)$ times more computationally expensive than \acord{}, where $|\mathcal{N}^r(\mathcal{D})|$ is the size of an $r$-hop neighborhood for any node in a $\mathcal{D}$-regular graph.
\end{corollary}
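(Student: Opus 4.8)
The plan is to compare the per-round computational cost of \lpc{} and \acord{} by counting the size of the optimization problem each agent must solve in a single round, and then amortizing over how often such a solve is needed. First I would characterize \acord{}'s cost: by Theorem~\ref{main_thm:graph_dependence} (and its proof), each agent runs $K_t = \Theta(\mathcal{D}\log(N\mathcal{D}^2 T^4))$ inner iterations, and each inner iteration consists of (i) an unconstrained $d$-dimensional strongly convex minimization — whose cost is some fixed $\mathcal{C}_d$ independent of $\mathcal{D}$, $N$, $k$, $r$ — plus (ii) averaging over the $\mathcal{D}$ incident auxiliary variables, which is $\Theta(\mathcal{D}d)$. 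So the per-agent per-round cost of \acord{} is $\Theta\!\big(\mathcal{D}\,(\mathcal{C}_d + \mathcal{D}d)\,\log(N\mathcal{D}^2 T^4)\big)$; absorbing the $d$-dependence and the $\log$ factors into constants, the dominant dependence is essentially $\widetilde{\Theta}(\mathcal{D})$ (or $\widetilde{\Theta}(\mathcal{D}^2)$ if one keeps the averaging term). For the comparison we only need a polynomial-in-$\mathcal{D}$ upper bound on \acord{}'s cost.

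Next I would lower-bound \lpc{}'s cost. From \eqref{eqn: lpc formulation}, each agent solves, every round, a strongly convex program over the variables $\{\mathbf{x}_\tau^{\mathcal{N}_i^r}\}_{\tau=t}^{t+k-1}$, i.e.\ over a block of dimension $k\cdot|\mathcal{N}^r(\mathcal{D})|\cdot d$. A strongly convex quadratic-type minimization in dimension $n$ costs $\Omega(n^3)$ by any exact method relying on a linear solve (or $\Omega(n^2)$ per iteration for first-order methods times the number of iterations needed for a comparable accuracy — either way the lower bound we want is the cubic one that \cite{lin2022decentralized} itself uses for its complexity accounting). Hence \lpc{}'s per-agent per-round cost is $\Omega\!\big(k^3\,|\mathcal{N}^r(\mathcal{D})|^3\,d^3\big)$.

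Then I would take the ratio. Dividing the \lpc{} lower bound by the \acord{} upper bound, the $d$-factors and the logarithmic/constant factors collapse into the hidden constant, leaving
\[
\frac{\text{cost}_{\lpc{}}}{\text{cost}_{\acord{}}} \;=\; \Omega\!\left(\frac{k^3\,|\mathcal{N}^r(\mathcal{D})|^3}{\mathcal{D}}\right),
\]
which is exactly the claimed bound. I would also note that \lpc{} must re-solve this program at \emph{every} round (MPC has no reuse across rounds in the adversarial analysis), while \acord{}'s $K_t$ already counts all the work it does in a round, so there is no further amortization subtlety; the per-round comparison is the honest one. A one-line remark that $|\mathcal{N}^r(\mathcal{D})|$ can itself be as large as $\Theta(\mathcal{D}^r)$ for expander-like regular graphs would underscore how dramatic the gap is, but that is not needed for the statement.

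The main obstacle — and the only place where care is genuinely required — is pinning down a defensible model of ``computational cost'' so that both the $\Omega$ for \lpc{} and the $O$ for \acord{} are simultaneously justified. The cleanest route is to fix the computational primitive to be the one implicitly used in \cite{lin2022decentralized}: solving a strongly convex program of dimension $n$ costs $\Theta(n^3)$ (a single Newton/linear-solve step, which is exact for the quadratic switching+dissimilarity structure and a constant number of steps otherwise). Under that convention, \acord{}'s inner step is an $n=d$ problem, contributing only a constant $\mathcal{C}_d=\Theta(d^3)$ per iteration, whereas \lpc{}'s is an $n = k|\mathcal{N}^r(\mathcal{D})|d$ problem; the rest is bookkeeping of the $K_t$ multiplicity from Theorem~\ref{main_thm:graph_dependence} and cancellation of the $d^3$ and logarithmic factors into the constant hidden by $\Omega(\cdot)$. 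I would state this cost convention explicitly at the start of the proof so the two bounds are on equal footing, and then the inequality is immediate.
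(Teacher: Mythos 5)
Your proposal matches the paper's proof essentially step for step: both fix the computational primitive to a cubic-cost linear solve on quadratic instances over a static $\mathcal{D}$-regular graph, charge \lpc{} an $\Omega(d^3|\mathcal{N}_i^r|^3k^3)$ inversion of a $k\cdot|\mathcal{N}_i^r|\cdot d$-dimensional system per agent per round, charge \acord{} $\Theta(\mathcal{D}\log(N\mathcal{D}^2T^4))$ inversions of $d\times d$ systems via Theorem~\ref{main_thm:graph_dependence}, and take the ratio with the $d^3$ and logarithmic factors absorbed into the hidden constant. Your explicit statement of the cost convention and the aside on the $\Theta(\mathcal{D}d)$ averaging term are slightly more careful than the paper's writeup but do not change the argument.
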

The proof, presented in Appendix \ref{proof: resource_comp}, is quite simple and uses the most basic hitting costs: quadratic, to quantify the computational load for the two algorithms. Even without future predictions ($k=1$),
\begin{align}
    \mathcal{D} < \mathcal{D}^3 \ll |\mathcal{N}_i^r|^3 = |\mathcal{N}^r(\mathcal{D})|^3
\end{align}
for large $\mathcal{D}$-regular networks.
\begin{remark}[Ring graph]
    When $\mathcal{G}_t$ is a ring graph, $\mathcal{D} = 2$ and $|\mathcal{N}^r(2)| = 2r+1$, and the aforementioned factor becomes $\mathbf{(2r+1)^3k^3}$, illustrating \lpc{$(r)$}'s computational overhead scaling polynomially with $r$.
\end{remark}

Next, we conduct a one-to-one performance comparison between \acord{} and \lpc{} to corroborate the above claims.

\section{Numerical Experiments}\label{sec:num_exp}
To further understand the performance of \acord{} relative to \lpc{}, and compare these two against naive decision techniques, we conduct empirical studies across various network topologies.
\subsection{Setup}
We build an online environment where the decision space is one-dimensional ($d=1$), the agents receive quadratic hitting costs $\alpha^i_t (x-v^i_t)^2$ and experience quadratic switching costs with homogeneous dissimilarity costs (weights are same for each edge). Each round and for each agent, the $\alpha^i_t$ is chosen randomly in $\R^+$ with sudden spikes throughout the horizon to create an adversarial environment. The minimizers $v^i_t$ are also generated in a similar fashion.

Although we can study various graph structures, we stick to $\mathcal{D}$-regular graphs. The reason is that we want to highlight the effect of the number of agents $N$, the graph density (represented by $\mathcal{D}$) and the effect of dissimilarity costs (through the weight $\beta$ on it) in these experiments. We explain the set-up in detail in Appendix \ref{appendix_sec: num_exp_descp}.

In addition to comparing \acord{} and \lpc{}, we also consider two naive algorithms: (i) $x^i_t = v^i_t$ and, (ii) $x^i_t = \argmin_x f^i_t(x) + \frac{1}{2}(x-x^i_{t-1})^2$, which we denote as Follow the Minimizer (\ftm{}) and Local (\loc{}), respectively. We compare these algorithms with total cumulative cost over the horizon $T$ as the performance metric. In all the results to follow, we consider $T$ to range from $1$ to $20$. We normalize the y-axis of all plots to $[0,1]$ as we only focus on relative performance among the algorithms.
\begin{figure}[H]
    \centering
     \subfigure[$K_t = 12$]{%
        \includegraphics[width=0.35\linewidth]{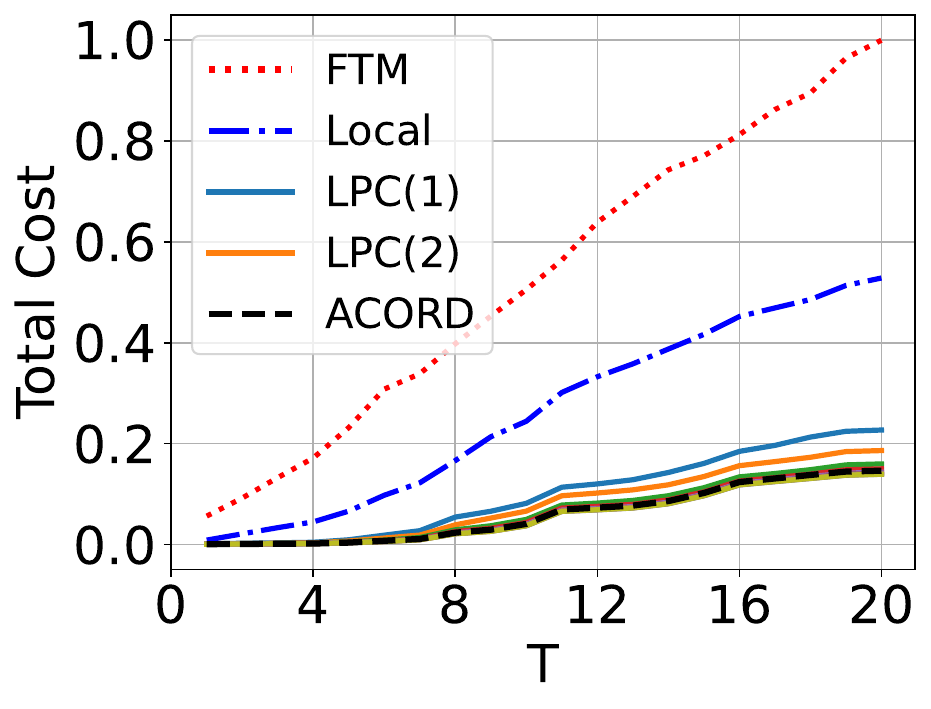}
    \label{fig:all_algs_1}}
     \subfigure[$N=20,\mathcal{D}=2$]{
        \includegraphics[width=0.35\linewidth]{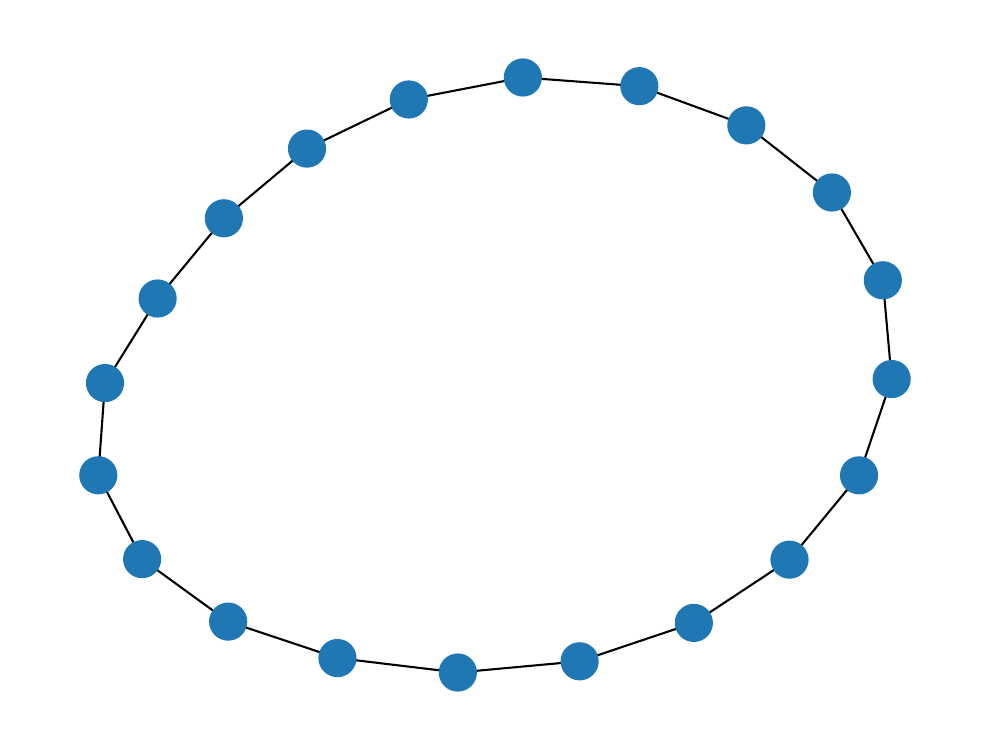}
        \label{fig:all_algs_2}}
    \caption{All algorithms' performance for  $N=20$, $\mathcal{D}=2$, $T=20$ and $\beta=50$}
    \label{fig:all_algs}
\end{figure}
From Figure \ref{fig:all_algs}, we observe that \ftm{} and \loc{} are much worse than nuanced approaches like \acord{} and \lpc{}. The poor performance of the former two can be attributed to the lack of attention to dissimilarity costs, with \ftm{} being worse by being agnostic to switching costs too. 
\subsection{\acordcaps{} vs. \lpccaps{}}

We now solely focus on the empirical performance comparison of \acord{} with the state-of-the-art \lpc{} algorithm. In the following experiments, we consider variants of \lpc{} that do not have future predictions. This is to ensure the comparison with \acord{} is fair, as the latter too does not access future hitting costs. Further, we consider \lpc{$(r)$} with per-agent network-access ranging from $r=1$-hop (immediate neighborhood) to as large as $r=d_{\max}$-hop (full graph), where $d_{\max}$ is the diameter of the graph $\mathcal{G}$.

It is worthwhile to note that only \lpc{$(1)$} and \acord{} are comparable in terms of network resources, as in both protocols, agents are restricted to contact only their \textit{immediate neighbors}. Note that for $r>1$, \lpc{$(r)$} utilizes network and agents' information \emph{not} available to \acord{}.

In all the experiments, we choose the minimum $K_t$ such that \acord{} matches the performance of \lpc{$(d_{\max})$}. We make this choice of $K_t$ to highlight the performance of \acord{} against the most powerful state-of-the-art.

\paragraph{Common observation.} Across all values of $N$, $\mathcal{D}$, $\beta$ and $T$ we see that the total cost of \acord{}, is always lower than that of \lpc{$(r)$} for any $r>0$. 
This corroborates Corollary \ref{corr:lpc_sub_opt} [3.7 from \cite{lin2022decentralized}] that \lpc{} is always worse than the online-optimal, which we proved to be \acord{} asymptotically, in Corollary \ref{corr: acord_asym_opt}.
\begin{figure}[H]
    \centering
     \subfigure[$\beta = 10, K_t = 6$]{%
        \includegraphics[width=0.35\linewidth]{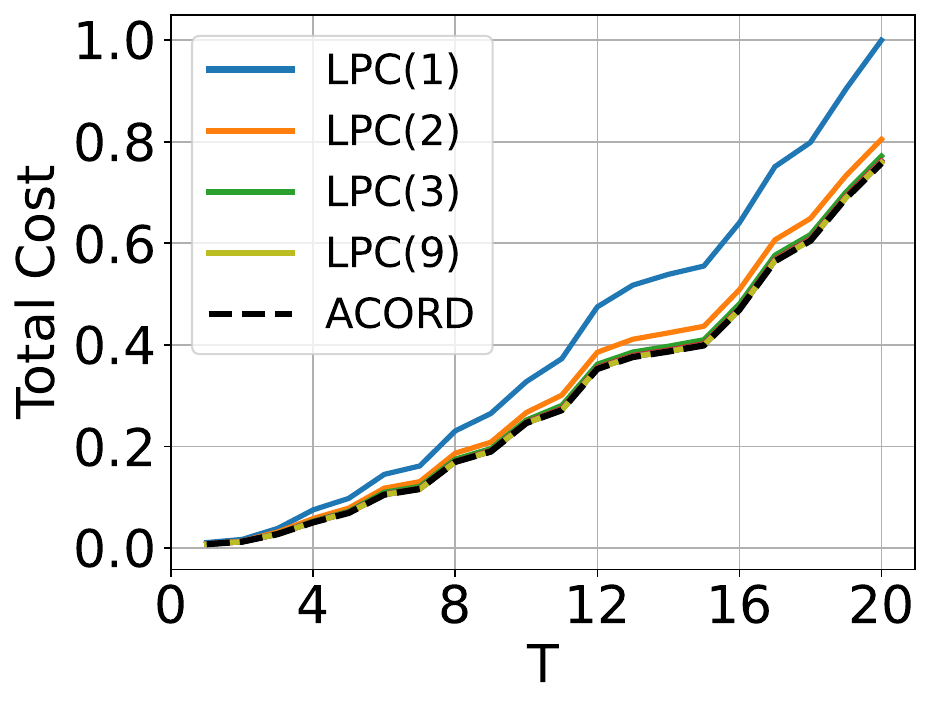}
        \label{fig:beta_sweep_1}}
     \subfigure[$\beta = 50, K_t = 12$]{
        \includegraphics[width=0.35\linewidth]{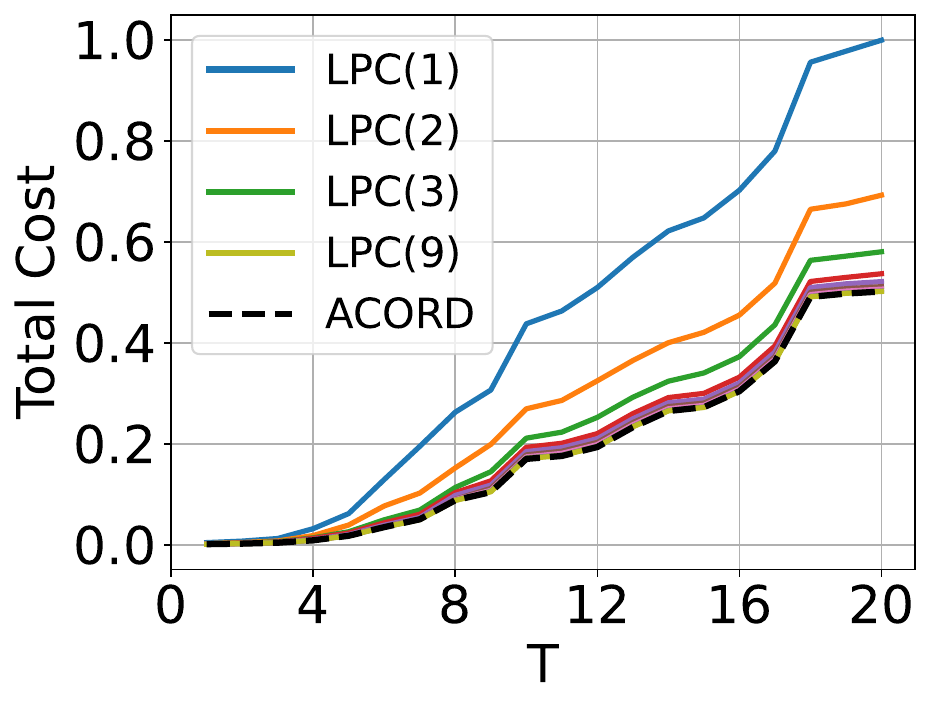}
        \label{fig:beta_sweep_2}}\\
    \subfigure[$\beta = 250, K_t = 15$]{
        \includegraphics[width=0.35\linewidth]{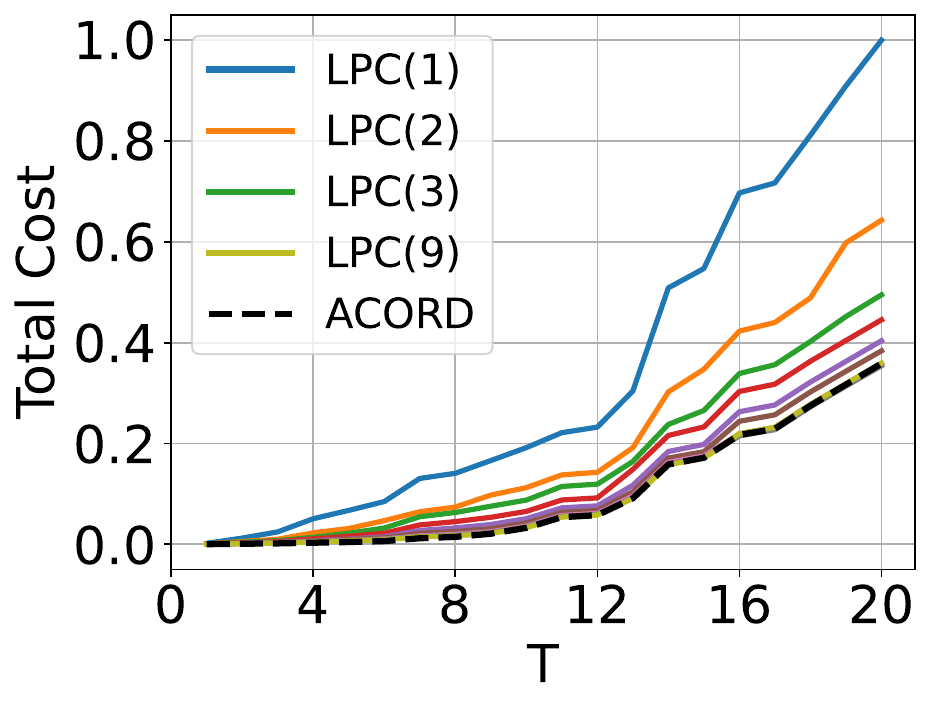}
        \label{fig:beta_sweep_3}}
     \subfigure[$\frac{\text{Cost}_{\acord{}}[1,20]}{\text{Cost}_{\lpc{(1)}}[1,20]} (\beta)$]{%
    \includegraphics[width=0.35\linewidth]{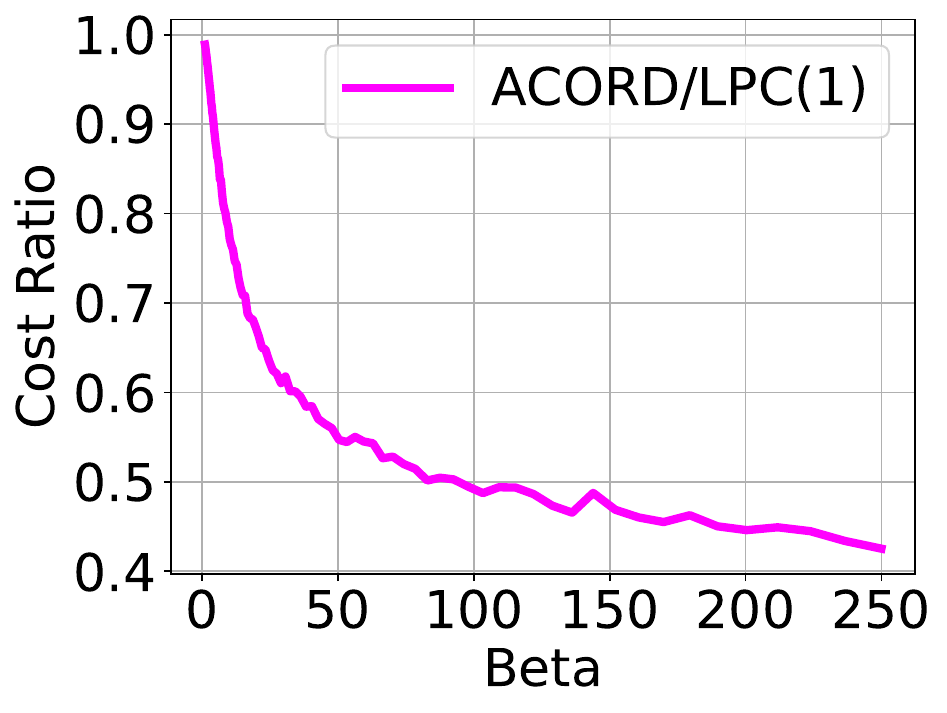}
    \label{fig:beta_sweep_trend}}
    \caption{$N=20$, $\mathcal{D}=2$, $T=20$ and different $\beta$ values.}
    \label{fig:beta_sweep}
\end{figure}
\paragraph{Trend with $\beta$.} Figures \ref{fig:beta_sweep_1}, \ref{fig:beta_sweep_2}, \ref{fig:beta_sweep_3} show that as $\beta$ increases, the gap between \lpc{(1)} and \acord{} increases. This trend is verified in Figure \ref{fig:beta_sweep_trend} where we see the progressively superior performance of \acord{} over the state-of-the-art as $\beta$ increases. Since $\beta$ is the weight on the dissimilarity cost, it highlights that \acord{} \textit{handles these costs much more efficiently} compared to \lpc{}.

Before studying the trend with graph edge density $\mathcal{D}$, we first explain the non-trivial relationship between $\mathcal{D}$ and the maximum possible $r$ ($d_{\max}$, graph diameter).

\begin{remark}\label{rem: D_and_r}
     Fixing $N$, in a $\mathcal{D}$-regular graph, $d_{\max} \downarrow$ as $\mathcal{D} \uparrow$, with maximum being $\left \lfloor \frac{N}{2} \right\rfloor $ for $\mathcal{D}=2$ and minimum being $1$ for $\mathcal{D}=N-1$. Hence, Figures \ref{fig:D_sweep_1}, \ref{fig:D_sweep_2}, \ref{fig:D_sweep_3} have progressively less number of plots.
\end{remark}
\paragraph{Trend with graph density $\mathcal{D}$.} Figures \ref{fig:D_sweep_1}, \ref{fig:D_sweep_2}, \ref{fig:D_sweep_3} indicate a decreasing gap between \lpc{(1)} and \acord{} as $\mathcal{D}$ increases, further verified by Figure \ref{fig:D_sweep_trend}. In sparser graphs, where communication-restricted decentralization is more difficult, we see that \acord{} does a much better job than \lpc{} (see lower $\mathcal{D}$ values in Figure \ref{fig:D_sweep_trend}). This further goes to show that \acord{} handles decentralization much more robustly than \lpc{}.
\begin{figure}[]
    \centering
     \subfigure[$\mathcal{D} = 2, K_t=12$]{%
        \includegraphics[width=0.35\linewidth]{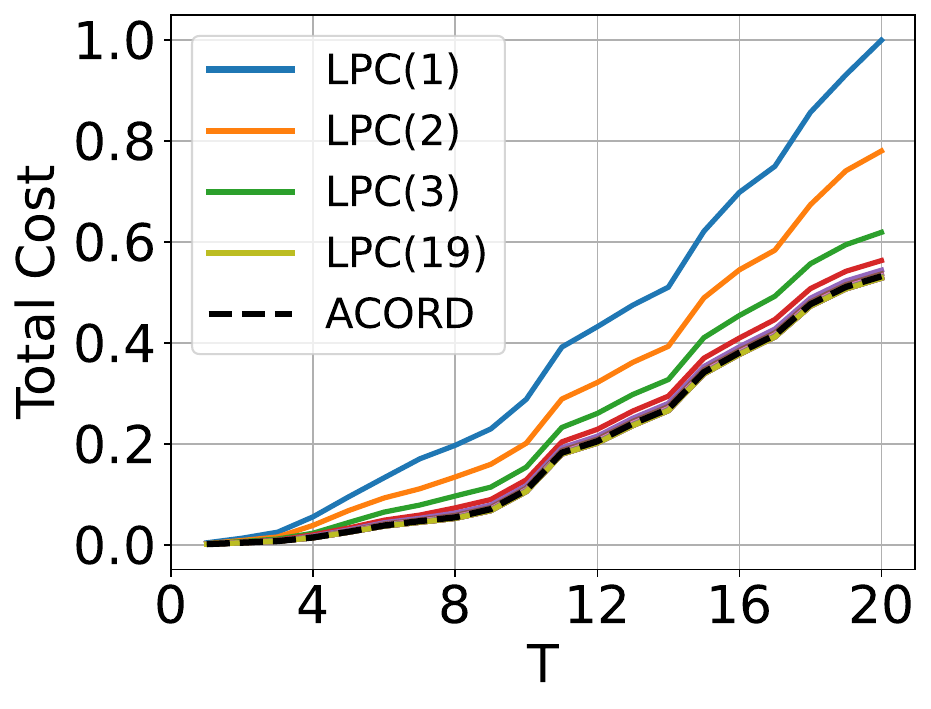}
        \label{fig:D_sweep_1}}
     \subfigure[$\mathcal{D}= 10, K_t=12$]{
        \includegraphics[width=0.35\linewidth]{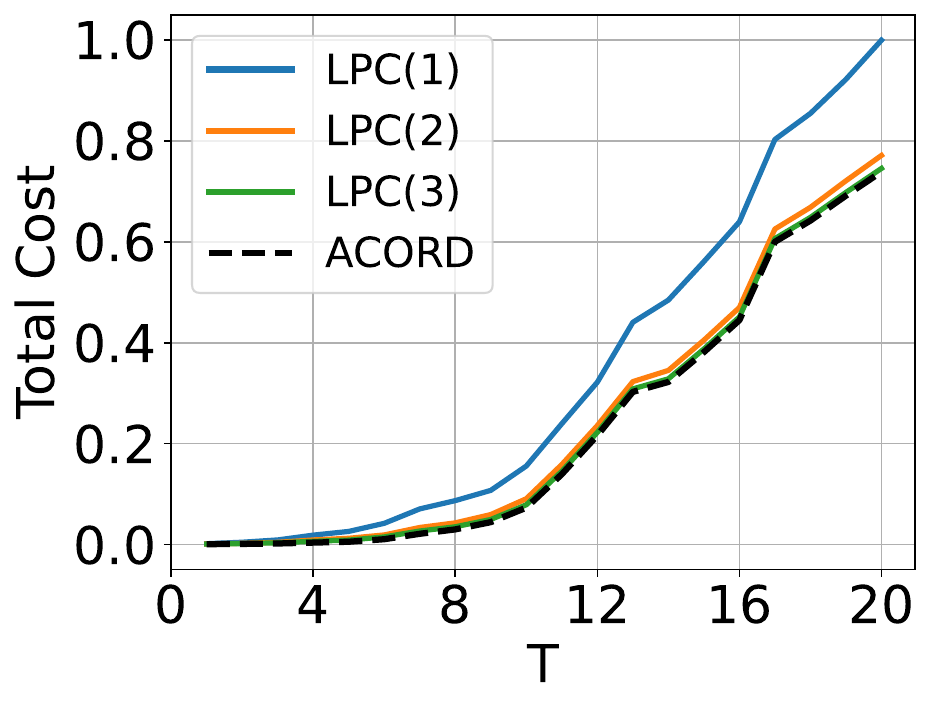}
        \label{fig:D_sweep_2}}\\
    \subfigure[$\mathcal{D}= 39, K_t=12$]{
        \includegraphics[width=0.35\linewidth]{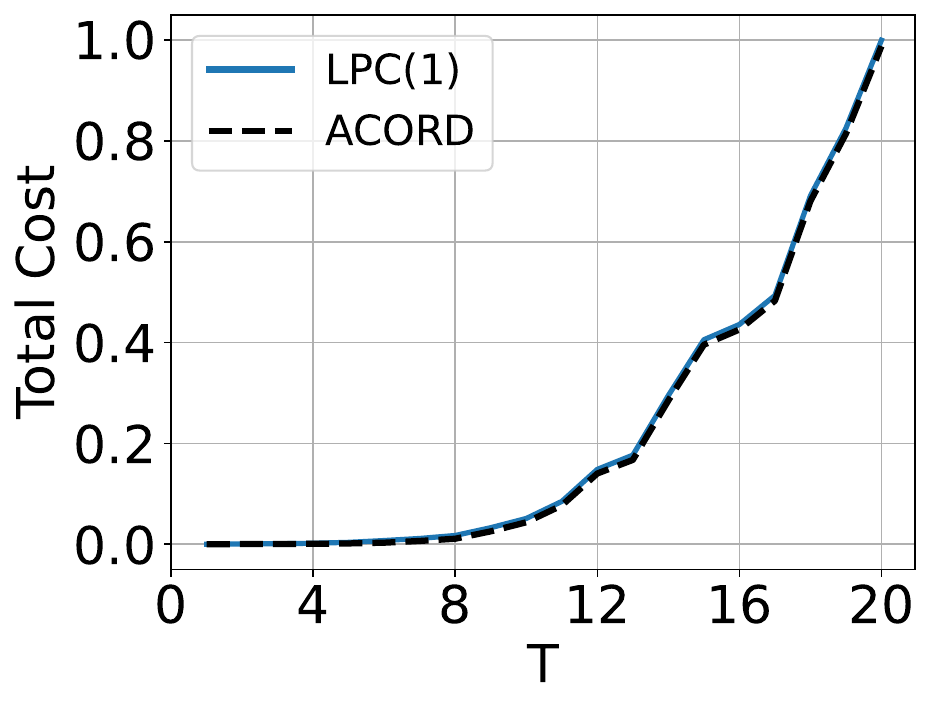}
        \label{fig:D_sweep_3}}
     \subfigure[$\frac{\text{Cost}_{\lpc{(1)}}[1,20]}{\text{Cost}_{\acord{}}[1,20]}(\mathcal{D})$]{%
    \includegraphics[width=0.35\linewidth]{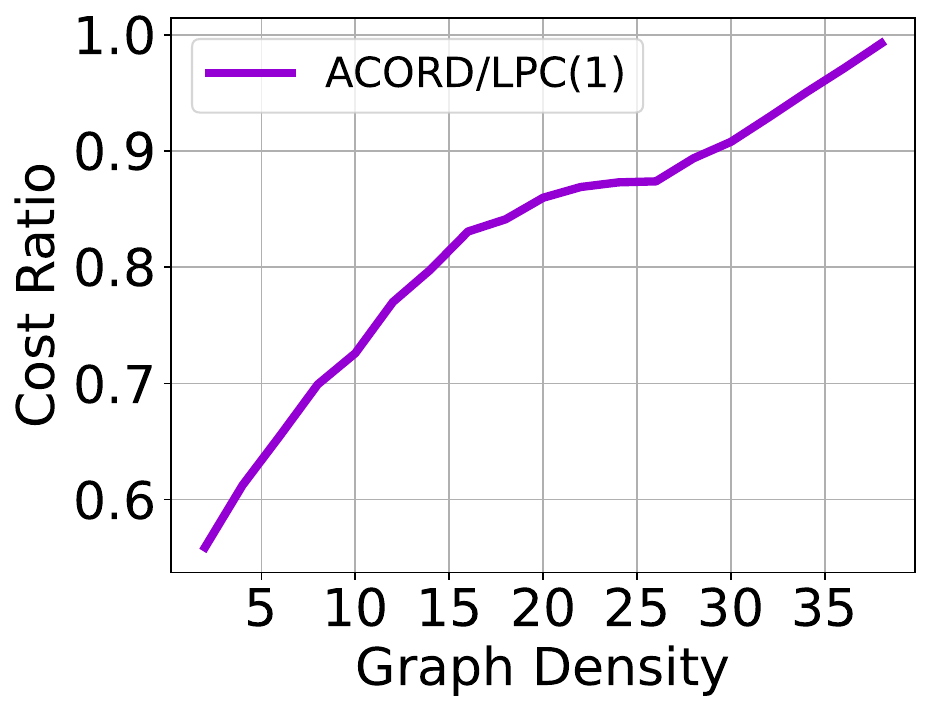}
    \label{fig:D_sweep_trend}}
    \caption{$N=40$, $\beta=50$, $T=20$ and different $\mathcal{D}$ values.}
    \label{fig:D_sweep}
\end{figure}
\paragraph{Trend with $N$.} The performance gap's trend with increasing $N$ is dependent on the graph's sparsity, in the sense, how $\mathcal{D}$ is related to $N$. We first look at Figure \ref{fig:N_sweep_trend}. Observe that for the sparsest ring graph $(\mathcal{D}=2)$, \acord{} is \textit{uniformly better} than \lpc{} irrespective of network size. On the other end, for the fully-connected case $(\mathcal{D}=N-1)$, \lpc{} is close to \acord{} for small and large networks. Finally, for various levels in between, that is $\mathcal{D} = N/4, N/2, 3N/4$, we see that \acord{} is superior to \lpc{} for \textit{smaller networks}, with the gap reducing as network size increases. Figures \ref{fig:N_sweep_1}, \ref{fig:N_sweep_2}, \ref{fig:N_sweep_3} represent how relative performance behaves for increasing $N$ with $\frac{\mathcal{D}}{N}$ set to $1/4$.

\begin{figure}[]
    \centering
     \subfigure[$N=20, \mathcal{D} = 5,K_t=6$]{%
        \includegraphics[width=0.35\linewidth]{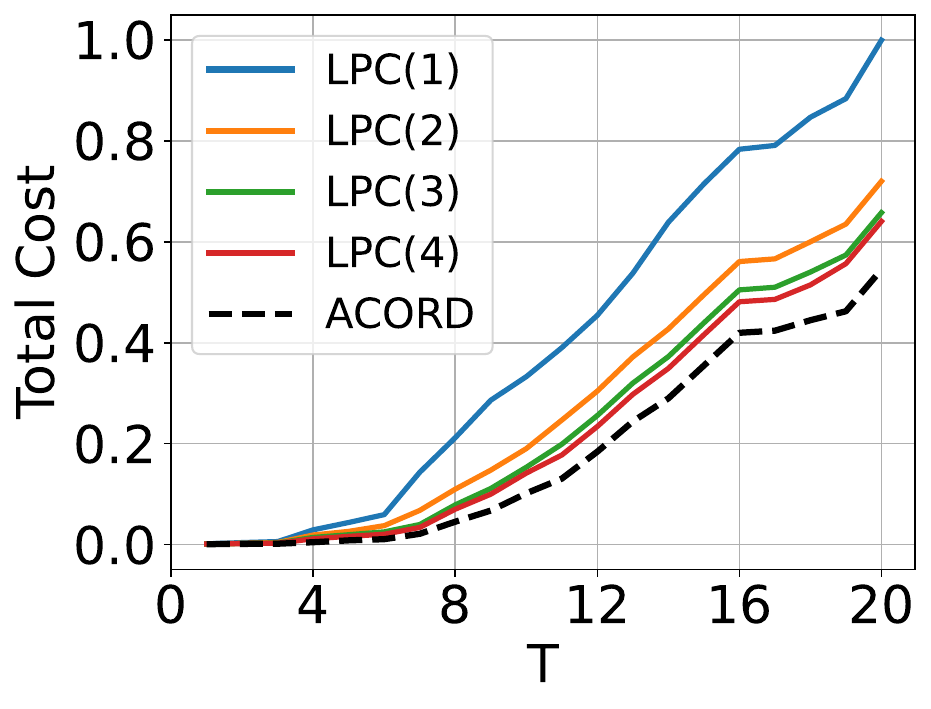}
        \label{fig:N_sweep_1}}
     \subfigure[$N=60, \mathcal{D}= 15, K_t=6$]{
        \includegraphics[width=0.35\linewidth]{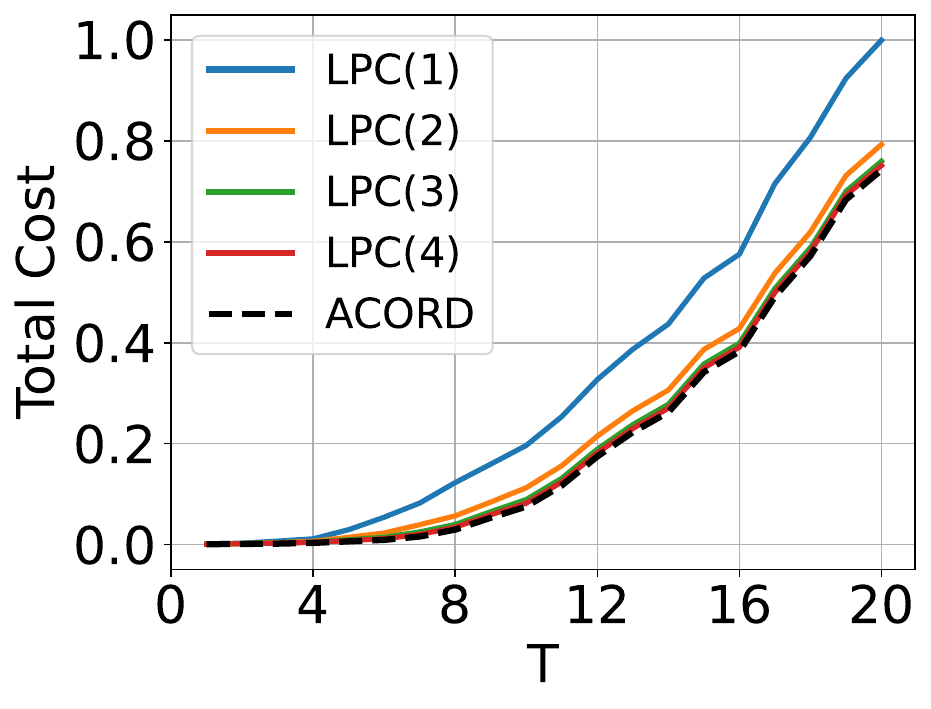}
        \label{fig:N_sweep_2}}\\
    \subfigure[$N=200, \mathcal{D}= 50,K_t=6$]{
        \includegraphics[width=0.35\linewidth]{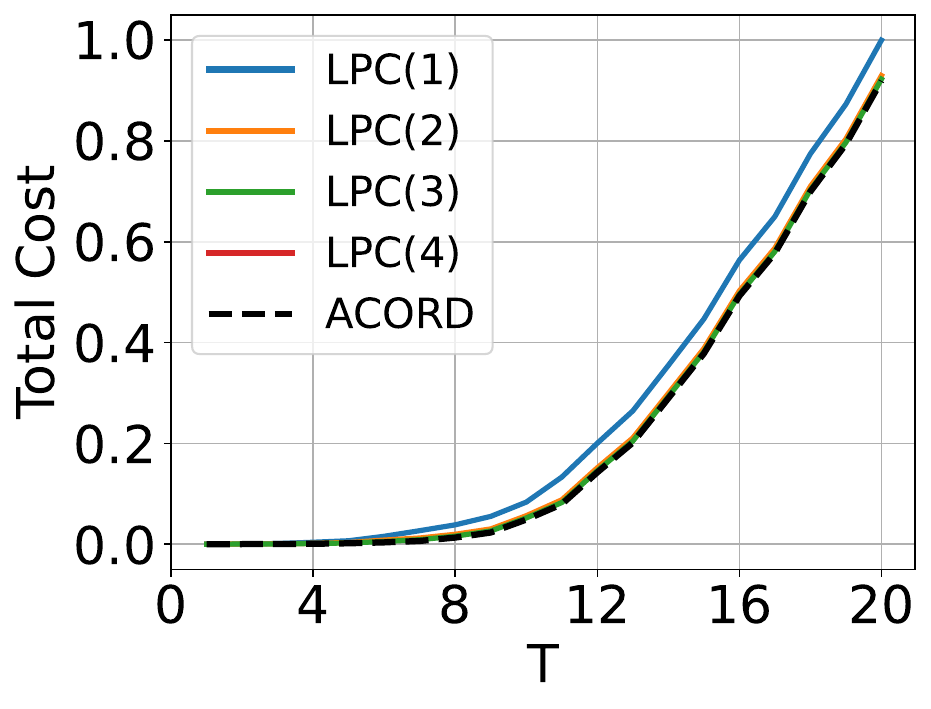}
        \label{fig:N_sweep_3}}
     \subfigure[$\frac{\text{Cost}_{\lpc{(1)}}[1,20]}{\text{Cost}_{\acord{}}[1,20]}(N,\mathcal{D})$]{%
    \includegraphics[width=0.35\linewidth]{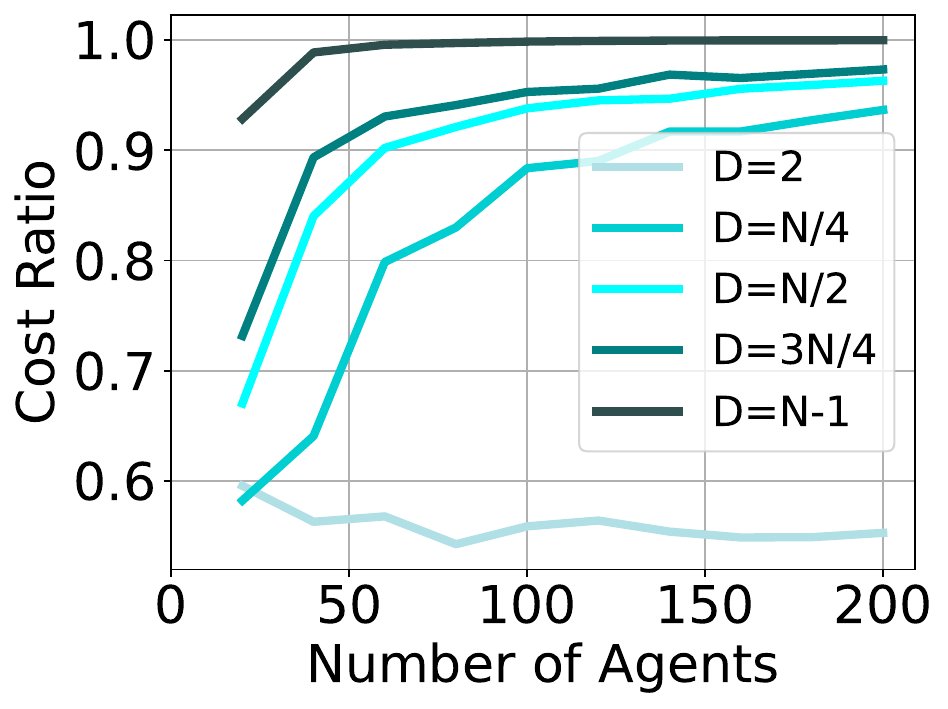}
    \label{fig:N_sweep_trend}}
    \caption{\acord{} v/s \lpc{} for $T=20$, $\beta=50$, and different $N$ and $\mathcal{D}$. The trend varies with how $\frac{\mathcal{D}}{N}$ is set.}
    \label{fig:N_sweep}
\end{figure}
\subsection{Empirical Resource efficiency} 
The two algorithms in question take two different approaches to solve the per-round decentralization, \acord{} performs an iterative scalable-optimization and \lpc{} does a one-shot neighborhood optimization. To understand which approach is faster in practice, we will now perform a timing-analysis of \acord{} against \lpc{}.

Corollary \ref{corr: resource_utlization} suggests that a denser graph highlights the resource-efficiency of \acord{}. We corroborate it here by comparing the average runtimes of \acord{} and \lpc{$(r)$} for $r \in \{1,\ldots,d_{\max}\}$ for various values of graph edge densities $\mathcal{D} \in \{2,\ldots,N-1\}$. We perform this runtime comparison on a single thread on an Apple M1 pro chip. We calculate the runtime of \acord{} by vectorizing then dividing the average run-time by $N$ for the per-agent runtime, as matrix-inversion of an $N$-entry diagonal matrix is $\mathcal{O}(N)$ on a single thread. For \lpc{}, we compute the action for each agent sequentially, so average agent runtime is calculated by again normalizing by $N$. Since we build a homogeneous graph $\mathcal{G}$ over the agents, compute-time is expected to be similar across the agents. Detailed explanation of the simulation set-up can be found in Appendix \ref{appendix_sec: num_exp_descp}.

The metric we consider here is $\frac{\tau_{\acord{}}}{\tau_{\lpc{(r)}}}$, with $\tau_{\alg{}}$ defined as \alg{}'s run-time per agent averaged across the horizon $T$ and across the sample runs. It allows us to infer the relative resource-efficiency of \acord{} to \lpc{$(r)$} per agent. We take $N=50$ and $\beta=50$ for this comparison, with $K_t = 15$ for \acord{}. Figures \ref{fig:perf_comp_3} and \ref{fig:perf_comp_4} confirm that this $K_t$ value is enough for \acord{} to beat \lpc{$(r)$} for any $r>0$.
\begin{figure}[]
    \centering
     \subfigure[Increasing $\mathcal{D}$]{%
        \includegraphics[width=0.35\linewidth]{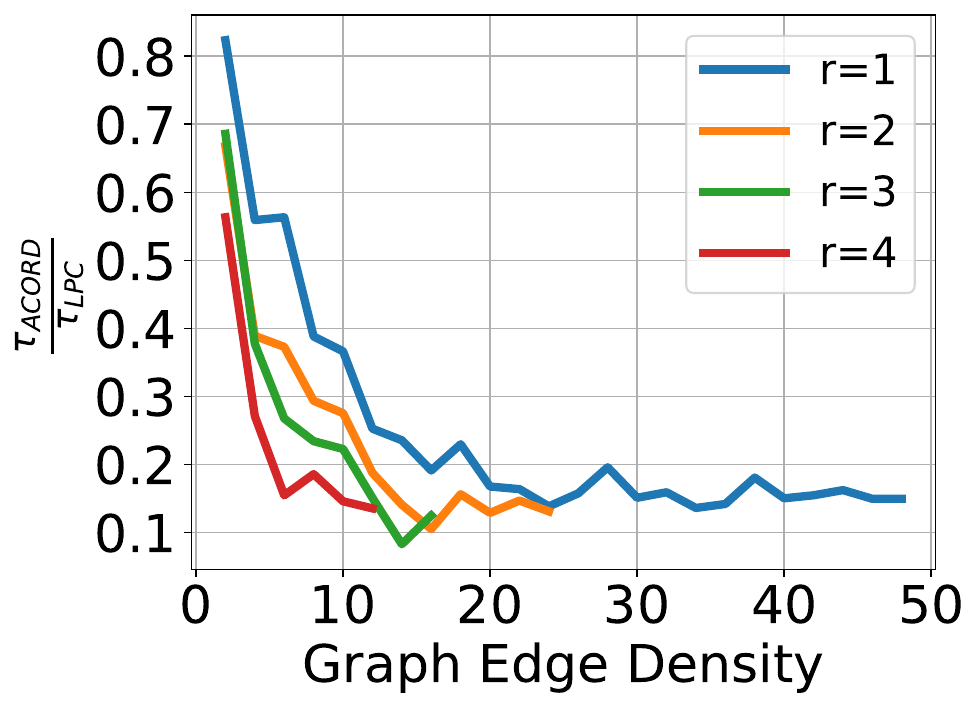}
        \label{fig:perf_comp_1}}
     \subfigure[Increasing $r$]{
        \includegraphics[width=0.35\linewidth]{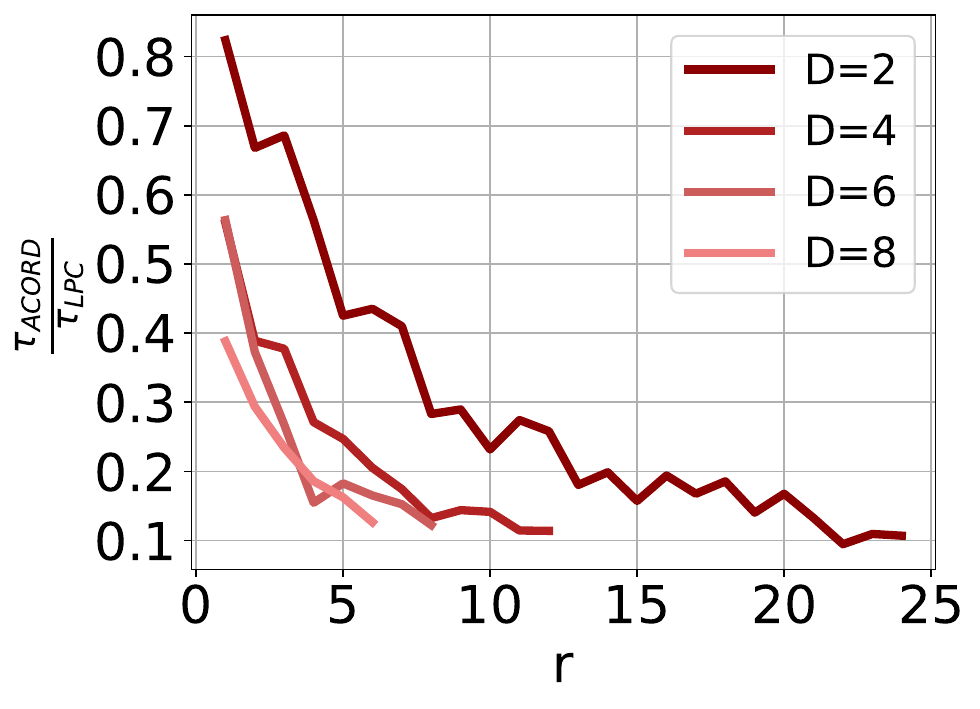}
        \label{fig:perf_comp_2}}\\
    \subfigure[$\mathcal{D}= 2,K_t=15$]{
        \includegraphics[width=0.35\linewidth]{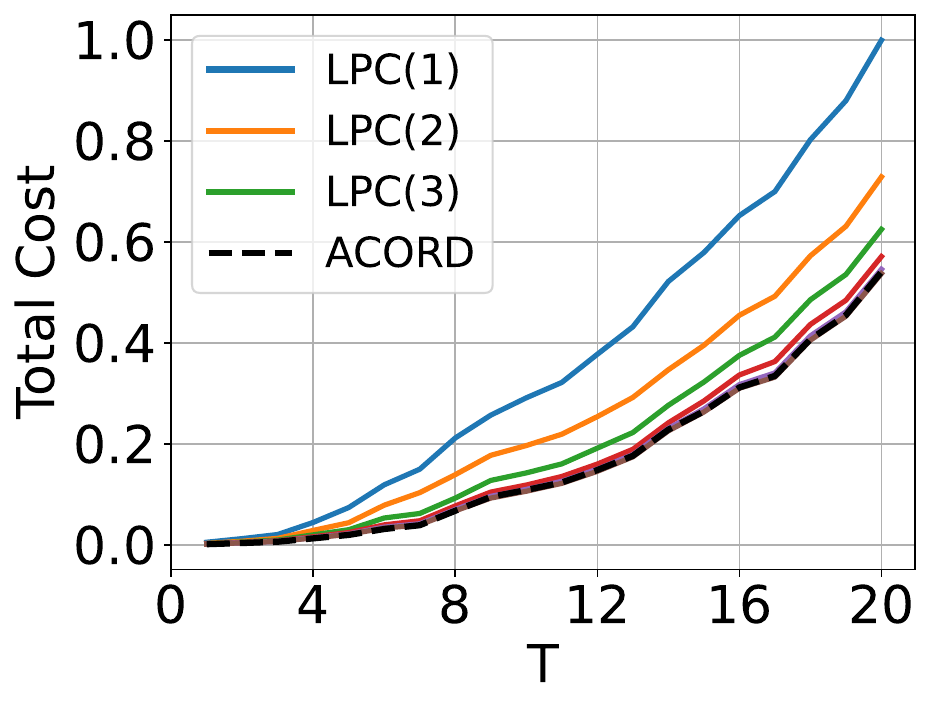}
        \label{fig:perf_comp_3}}
     \subfigure[$\mathcal{D}=8, K_t=15$]{%
    \includegraphics[width=0.35\linewidth]{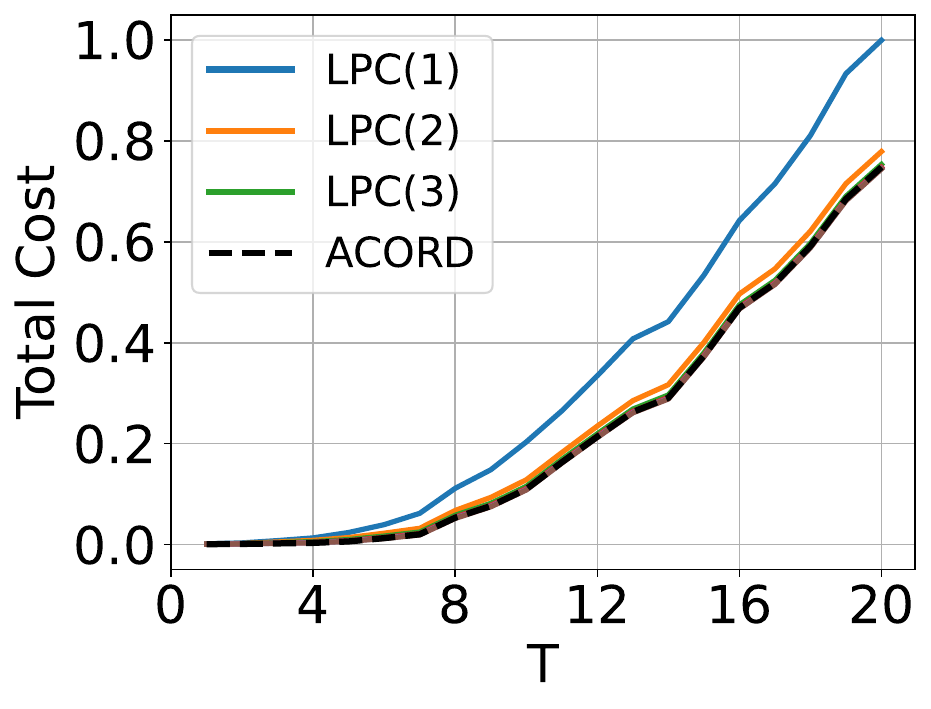}
    \label{fig:perf_comp_4}}
    \caption{\acord{} v/s \lpc{} for $N=50$, $T=20$, $\beta=50$, and different $r$ and $\mathcal{D}$.}
    \label{fig:perf_comp}
\end{figure}
Figure \ref{fig:perf_comp_1} illustrates the computational-expense ratio established in Corollary \ref{corr: resource_utlization}: we see that \lpc{(r)} is progressively slower to \acord{} as $\mathcal{D}$ increases, for any $r>0$. Figure \ref{fig:perf_comp_2}'s trend is rather obvious: \lpc{$(r)$} is progressively slower than \acord{} for increasing $r$. Finally, the varying x-axis ranges in \ref{fig:perf_comp_1} and \ref{fig:perf_comp_2} plots are reflective of the feasible $\mathcal{D}$ and $r$ values for a fixed $N$ (recall Remark \ref{rem: D_and_r}).
\subsection{Summary}
From Section \ref{sec:lpc_comparison} and the numerical experiments, it is understood that \acord{} outperforms the state-of-the-art \lpc{} across the board:
\begin{enumerate}
    \item Only \lpc{$(1)$} has the same level of network access per agent as \acord{}. In fact, \lpc{$(1)$} also employs function-level information sharing. Despite this, \acord{} performs markedly better than \lpc{$(1)$} across all settings. 
    \item For $r$ $>$ $1$, the comparison between \lpc{$(r)$} and \acord{} is in fact unfair, as \lpc{$(r)$} involves function-level information sharing across $\mathbf{r}$\textbf{-hop} neighborhoods for each agent. \acord{} on the other hand uses strictly local information and only $1$-hop vector-sharing among agents. Still \acord{} manages to beat \lpc{$(r)$} for all $r$ $>$ $0$ with the latter able to match only in the limit $r \to d_{\max}.$
    \item \acord{} is able to provide such superior performance with markedly lower runtime (Figures \ref{fig:perf_comp_1}, \ref{fig:perf_comp_2}), which we provably establish in Corollary \ref{corr: resource_utlization}.
\end{enumerate}

Such a leap of performance is only possible through a new approach to the online decentralization problem. \lpc{} involves neighborhood optimization and concepts from receding horizon control. However, it does not attempt to \textit{de-couple} the online objective each round. This is where we believe lies the fundamental error. Offline decentralization literature \cite{ShiWotao14ADMM} points out that decentralization is possible only for a decoupled objective.

Our approach to decentralization starts from this fundamental requirement: de-coupling. Once it is achieved, we design a decentralization protocol. Finally, we tie all this into the SOCO framework. This three-part design process gives rise to new analytical contributions, some of which go beyond the set-up considered in this work. We explain these in detail in Appendix \ref{appendix_sec: analytical_contribs}, right after the main paper.
\section{Conclusion}
In this work, we present the first decentralized algorithm in multi-agent SOCO that handles dynamic dissimilarity costs and adaptive networks without exchanging hitting cost functions among agents. In addition to asymptotic optimality, our algorithm is deployable in practice with guaranteed near-optimal performance. Tight guarantees on $K_t$'s network dependence give a complete picture of our algorithm's resource utilization. Better cost minimization and runtime performance than the state-of-the-art highlight the effectiveness of our approach.

The above guarantees are attributed to the use of auxiliary variables $\{z_e\}_{e\in\mathcal{E}_t}$, Alternating Minimization techniques and a unifying approximation framework, allowing convergence to the online optimal action without the exchange of hitting costs among agents.

\bibliography{refs_icml}
\bibliographystyle{abbrv}

\newpage
\onecolumn
\begin{appendices}

\section{Examples of Dissimilarity Costs in Practice}\label{appendix_sec: examples}
We present application examples to highlight the practicality of our model in this section: (i) UAV swarm control with local collision avoidance (ii) dynamic multi-product pricing, (iii) decentralized battery management and (iv) geographical server provisioning. 

\subsection{Formation Control for UAV Swarm with Local Collision Avoidance}
Formation control refers to the coordination of multiple agents, such as robots, drones, or autonomous vehicles, to achieve and maintain specific spatial arrangements while performing tasks. It involves the use of algorithms and control strategies to regulate the relative positions and orientations of the agents within the formation. This is crucial in applications like surveillance, search and rescue, and environmental monitoring, where teamwork enhances efficiency and effectiveness. Formation control typically relies on principles such as communication between agents, sensing of relative positions, and centralized or decentralized decision-making to ensure robustness and adaptability in dynamic environments.

Specifically, distance-based formation control has advantages in scenarios involving low communication bandwidth and low memory/power. The desired formation is achieved by actively controlling the distances between agents, based on the specified target inter-agent distances. Each agent is assumed to have the capability to sense the relative positions of its neighboring agents within its own local coordinate system. These local coordinate systems may have different orientations and are not required to be aligned \cite{Kwang-Kyo15}. This independence from the global coordinate system allows decentralization in formation control.

Specifically, \cite{dorfler2009formation,krick2009stabilisation,kuriki2015formation} use a distance-based penalty function of the form:
\begin{align}\label{eqn: dron_quad_diss_1}
    \gamma^a_{ij}(\|p_i-p_j\|) = k_p (\|p_i-p_j\|^2 - \|p^*_i-p^*_j\|^2)^2,
\end{align}
where $k_p>0$, $p_i,p_j$ are $i^{th}$ and $j^{th}$ agent's positions and $\|p^*_i-p^*_j\|$ is the desired distance between these two agents. Works like  \cite{dimarogonas2008stability,dimarogonas2010stability} further normalize the above penalty function by the squared inter-agent distance, that is 
\begin{align}\label{eqn: dron_quad_diss_2}
     \gamma^b_{ij}(\|p_i-p_j\|) = k_p \frac{(\|p_i-p_j\|^2 - \|p^*_i-p^*_j\|^2)^2}{\|p_i-p_j\|^2}
\end{align}
to ensure collision avoidance between neighboring agents, as the penalty blows up when $\|p^*_i-p^*_j\| \to 0$. Functions of the above form can be easily decoupled in the following manner:
\begin{align}\label{eqn: dron_diss_cost_1}
\begin{split}
    \gamma^a_{ij}(\|p_i-z_{ij}\|) &= \frac{k_p}{2} (4\|p_i-z_{ij}\|^2 - \|p^*_i-p^*_j\|^2)^2\\
    \gamma^a_{ij}(\|p_j-z_{ij}\|) &= \frac{k_p}{2} (4\|p_j-z_{ij}\|^2 - \|p^*_i-p^*_j\|^2)^2    
\end{split}
\end{align}
with $z_{ij} = \frac{p_i + p_j}{2}$. Similarly, $\gamma^b_{ij}$ can be modified without bias as follows:
\begin{align}\label{eqn: dron_diss_cost_2}
\begin{split}
    \gamma^b_{ij}(\|p_i-z_{ij}\|) &= \frac{k_p}{8} \frac{(4\|p_i-z_{ij}\|^2 - \|p^*_i-p^*_j\|^2)^2}{\|p_i-z_{ij}\|^2}\\
    \gamma^b_{ij}(\|p_j-z_{ij}\|) &= \frac{k_p}{8} \frac{(4\|p_j-z_{ij}\|^2 - \|p^*_i-p^*_j\|^2)^2}{\|p_j-z_{ij}\|^2}.
\end{split}
\end{align}

Non-linear model-predictive control (MPC) has been extensively studied in relation to various aspects of UAV or quad-rotor control \cite{ru2017nonlinear,kang2009linear,gavilan2015iterative,saccani2022multitrajectory,chao2012uav,baca2016embedded}. Simpler models in the form of Linear-Quadratic-Regulator (LQR) control has also been explored \cite{rinaldi2013linear,elkhatem2022robust,khatoon2014pid,lee2011modeling,alkhoori2017pid,budiyono2015control,kim2017dynamic}. Decentralized/distributed control of UAV swarms has also garnered attention \cite{richards2004decentralized,wehbeh2020distributed,bemporad2011decentralized,yuan2017outdoor,bassolillo2020decentralized}. However, most of these works do not have worst-case guarantees or assume the noise to be simple, like Gaussian, instead of adversarial. Further, as seen in \cite{lin2022decentralized}, predictive control based decentralization is very computationally expensive, while suffering from bias. 

\cite{LiGuannan20} and \cite{lin2021perturbation} have already shown that smoothed online convex optimization (SOCO) and predictive control are equivalent problems with general assumptions on the switching costs, like strong convexity and smoothness. Combining it with dissimilarity costs in \eqref{eqn: dron_quad_diss_1} or \eqref{eqn: dron_quad_diss_2} completes a SOCO model for UAV swarm control that avoids local collisions. With the decoupling shown in \eqref{eqn: dron_diss_cost_1} and \eqref{eqn: dron_diss_cost_2} and our proposed \acord{} method in Algorithm \ref{alg:ACORD} helps low-powered UAVS achieve formation control.


\subsection{Dynamic Multi-product Pricing}
This class of problems study an organization selling $N$ different products, that seeks to optimize its revenue by dynamically adjusting its catalog prices in response to shifts in the market over the horizon $T$. Each agent $i$ in graph $\mathcal{G}_t$ represents a product with price $x^i_t$ at time $t$. The non-stationary relationship between two complementary products $i$ and $j$ is modelled by an edge in a dynamically changing $\mathcal{G}_t$. Pricing models used in \cite{TalluriGarrett06,CandoganOzdaglar12,GallegoTopaloglu19} reflect our set-up.

Solving the global multi-product pricing problem precisely can be highly challenging in practice, particularly when dealing with large networks. For instance, major online e-commerce platforms handle millions of products, making it difficult to store the entire network, let alone perform complex computations on it. Additionally, since prices can be adjusted easily, these companies frequently implement dynamic pricing strategies, often updating prices daily or even more frequently, further increasing the computational complexity. Decentralized computation is, therefore, necessary to reduce the compute resources needed at the back-end for solving these large-scale problems.

\cite{lin2022decentralized} presents a form of this problem, similar to \cite{CandoganOzdaglar12}, where the demand model per product $i$ is given as
\begin{align}
    d^i_t = a_t^i - k_t^i x_t^i - \sum_{j \in \mathcal{N}^1_i\backslash i} \eta_t^{(j \to i)} x_t^j + b_t^i x_{t-1}^i
\end{align}
where $a_t^i, k_t^i, b_t^i > 0$, respectively representing minimum demand, negative effect of the current price and positive effect of the previous price. Further, it is common to assume to $\eta_t^{(j \to i)} < 0$ as the high price of a related item $j$ increases the demand of item $i$. The complete revenue maximization problem can be written as,
\begin{align}
    \max_{\{\mathbf{x}_t\}_t} \sum_{t=1}^T \sum_{i=1}^N x^i_t d^i_t
\end{align}
which can be translated to the following minimization problem
\begin{align}
    \min_{\{\mathbf{x}_t\}_t} \sum_{t=1}^T \left(\sum_{i=1}^N \xi_t^i \left(x^i_t - \frac{a^i_t}{\xi^i_t}\right)^2 + \frac{b^i_t}{2} (x^i_t - x^i_{t-1})^2 + \sum_{(i,j) \in \mathcal{E}_t} |\gamma_t^{(i,j)}|(x^i_t - x^j_t)^2 \right)
\end{align}
where $\gamma^{(i,j)}_t = \frac{\eta_t^{(j\to i)} + \eta_t^{(i\to j}}{2}$ and $\xi_t^i = k^i_t - \sum_{j \in \mathcal{N}_t^1\backslash i} |\gamma^{(i,j)}_t| - \frac{b_t^i + b_{t+1}^i}{2} > 0$. It is justified to consider parameters such that $\xi_t^i > 0$ as the current price of the item $i$ has the strongest effect $(k_t^i)$ on the demand. Comparing to the generalized objective considered in our model in \eqref{eqn: cumulative_obj}, we see that \acord{} is directly applicable to this set-up. Further details can be found in \cite{lin2022decentralized}.

\subsection{Decentralized Battery Management}
Although renewable energy sources like solar and wind are ideal for powering sustainable data centers, their availability is highly unpredictable due to fluctuating weather conditions. This inconsistency presents significant challenges in meeting the constant energy needs of data centers. To address this, large-scale energy storage systems, made up of multiple battery units, have become essential for capturing and utilizing intermittent renewable energy. However, managing such a complex system for maximum efficiency is no easy task. Each battery must independently regulate its charging and discharging cycles to maintain its energy levels within an optimal range (e.g., 35-75\%). At the same time, keeping the state-of-charge (SoC) levels uniform across all battery units is critical for prolonging the overall lifespan of the batteries and maximizing energy efficiency \cite{ZeraatiGolshan16,ahmadi2021optimum,zhao2022distributed}. This challenge can be effectively illustrated by adapting a standard form to our model. Each battery unit independently adjusts its state of charge (SoC) by charging or discharging, which results in a localized cost that reflects how far its SoC deviates from the desired range. Simultaneously, a temporal cost arises from the fluctuations in SoC resulting from these charging and discharging actions. 
The SoC difference between battery units $i$ and $j$, based on their physical connection in the network $\mathcal{G}_t$, can negatively impact performance and lifespan. For example, variations in battery voltage due to differing SoCs can lead to overheating or even battery damage \cite{wang2015balanced}. Spatial costs represent this need for coordination by penalizing deviations in charge levels.

As the number of coordinated devices grows, centralized power management approaches \cite{jabr2017linear} become computationally inefficient and rely significantly on expensive communication systems. Meanwhile, relying solely on locally available information results in instability and sub-optimal performance \cite{bolognani2019need}. Communication-assisted decentralized control techniques have proven to be far more effective \cite{liu2017distributed,tang2018fast,fan2019consensus,xu2020accelerated,xie2020robust}, highlighting the need for communication-efficient decentralized online optimization. Further details can be found in \cite{LiShaoleiWierman23}.

\subsection{Environmental-Aware Geographical Load Balancing}
Online service providers now rely heavily on a web of data centers strategically positioned near users to ensure low-latency services. However, the trade-off for this proximity is the considerable energy demand these facilities impose. To handle varying user loads throughout the day, it is crucial to continuously adjust the number of active servers in each data center, aligning server usage with energy efficiency goals while keeping operational expenses under control \cite{WiermanAndrew11,LinWierman12,WangHuang14,radovanovic2022carbon,kwon2020ensuring,khalil2022renewable}. Although increasing the number of active servers can enhance service responsiveness, it also escalates energy consumption, contributing to a larger environmental footprint.

Beyond merely reducing the total environmental impact of these data centers, it is imperative to consider environmental justice. This means minimizing the unequal distribution of the negative ecological effects caused by data center operations across different regions \cite{USDoEjust}. Ignoring such inequalities can pose significant risks to business stability and provoke unintended societal challenges \cite{metasustain}. Given this context, we can conceptualize the network of data centers as a set of interconnected agents $i$ within a dynamic graph $\mathcal{G}_t$. Each agent's varying local environmental impact is captured through a time-dependent hitting cost $f^i_t(\cdot)$. The energy costs associated with adjusting server capacities are quantified by the switching costs, $\frac{1}{2}\|x^i_t - x^i_{t-1}\|_2^2$, representing the costs of bringing servers online or taking them offline \cite{LinWiermanAndrew11}. Additionally, the equity in impact across locations is represented through dissimilarity costs, $\left\|A^{(i,j)} x^i_t - A^{(i,j)} x^j_t \right\|_2^2$, where $A^{(i,j)}$ models the relationship between the geographical locations $i$ and $j$. This model provides a holistic approach to balancing service quality, energy consumption, and regional environmental impacts within a distributed data center network. 

The optimization of the global objective, however, is very high dimensional given the complexity of a data-center operations \cite{buyya2010energy,sun2016optimizing} and the number of datacenters spread across the world \cite{azure,aws}. Hence, a centralized operation is not possible due to the scale and the communication costs involved, necessitating our decentralization methodology and communication model discussed in Section \ref{sec:model_prelims}.

\section{Analytical \& methodological contributions}\label{appendix_sec: analytical_contribs}
Our results in Sections \ref{sec: main_results}, \ref{sec:lpc_comparison} and \ref{sec:num_exp} of the main paper are consequence of the new methodologies and analytical tools we introduce to Decentralized SOCO and the broader area of SOCO. In this section, we take a deeper dive into these, while contrasting with existing techniques.
\subsection{Decoupling before decentralization}
Traditional work in decentralized optimization \cite{ShiWotao14ADMM,MaNikolakopoulos18fast,MaNikolakopoulos18graph,MaNikolakopoulos18hybrid} lays down a fundamental requirement for this process: \textbf{decoupling}. It means that the multi-agent objective one is looking to minimize in a \textit{localized} fashion (with communication) \textit{cannot have direct coupling} among agents' decision variables.

This is where \lpc{} algorithm takes a wrong turn, as it takes the coupled objective as is, and performs neighborhood optimization within each agent. Not only does it lead to computational and communication inefficiency, this method evidently \textit{cannot achieve optimality} even with exact future predictions, as shown in Corollary \ref{corr:lpc_sub_opt}.

Our first line of action, in the design of \acord{}, is to solve this unaddressed issue in Decentralized SOCO. We approach it from the fundamentals in offline decentralized optimization, introducing separability among agent decision variables in the minimized objective, so that local minimization is possible. Below, we present an alternative optimization objective formulation whose solution achieves the optimal performance of a fully centralized algorithm while allowing separability of decisions across agents. 
\begin{theorem}\label{main_thm:no_bias_CR}
    Denote 
    \begin{align}
        \begin{split}
    \mathbb{F}_t(\mathbf{x},\mathbf{z}) &=
    \sum_{i=1}^N \bigg\{ f_t^i(x^i) + \frac{\lambda_1^i}{2}\|x^i - x_{t-1}^{i}\|_2^2 + \beta \sum\limits_{\mathcal{E}_t \ni e \ni i}\|A_t^{e} x^i - A_t^{e}z_e\|_{2}^2 \bigg\},
\end{split}
    \end{align}
   where $\lambda_1^i = 2/\left(1 + \sqrt{1+4/\mu_i} \right)$. Then the sequence of actions $\{\Tilde{\mathbf{x}}_t\}_{t=1}^T$ obtained by solving the following at each round 
   \begin{equation}
   \label{eqn: decentralized_robd_obj}
       \Tilde{\mathbf{x}}_t, \mathbf{z}_* = \argmin_{\mathbf{x} = (x^i)_{i=1}^N, \mathbf{z} = (z_e)_{e \in \mathcal{E}_t}} \mathbb{F}_t(\mathbf{x},\mathbf{z})
   \end{equation}
   is guaranteed to have the optimal competitive ratio of
    \begin{align}
        \frac{1}{2} + \frac{1}{2}\sqrt{1 + \frac{4}{\min_i \mu_i}}.
    \end{align}
\end{theorem}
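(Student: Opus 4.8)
The plan is to connect the auxiliary-variable formulation \eqref{eqn: decentralized_robd_obj} to the known-optimal centralized regularized descent and then invoke the existing competitive-ratio analysis. The first step is to perform the inner minimization over the auxiliary variables $\mathbf{z}$ explicitly. For a fixed $\mathbf{x}$, the objective $\mathbb{F}_t(\mathbf{x},\mathbf{z})$ decomposes over edges $e = (i,j) \in \mathcal{E}_t$, and each term is a strictly convex quadratic in $z_e$ of the form $\beta\|A_t^e x^i - A_t^e z_e\|_2^2 + \beta\|A_t^e x^j - A_t^e z_e\|_2^2$. Since $A_t^e$ is full rank, the unique minimizer is $z_e^* = (x^i + x^j)/2$, and substituting back gives $\beta\|A_t^e x^i - A_t^e z_e^*\|_2^2 + \beta\|A_t^e x^j - A_t^e z_e^*\|_2^2 = \frac{\beta}{2}\|A_t^e x^i - A_t^e x^j\|_2^2$. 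Hence $\min_{\mathbf{z}} \mathbb{F}_t(\mathbf{x},\mathbf{z})$ equals
\begin{align}
\sum_{i=1}^N \Big\{ f_t^i(x^i) + \tfrac{\lambda_1^i}{2}\|x^i - x_{t-1}^i\|_2^2 \Big\} + \tfrac{\beta}{2}\sum_{(i,j)\in\mathcal{E}_t} \|A_t^{(i,j)} x^i - A_t^{(i,j)} x^j\|_2^2,
\end{align}
so that $\tilde{\mathbf{x}}_t$ is exactly the minimizer of the true per-round cost (hitting costs plus dissimilarity costs, which together form a $\mu$-strongly convex stage cost) regularized by the separable quadratic $\sum_i \frac{\lambda_1^i}{2}\|x^i - x_{t-1}^i\|_2^2$ in the switching-cost metric.

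The second step is to recognize this as a decoupled instance of Regularized Online Balanced Descent (R-OBD / ROBD) applied with regularization weight $\lambda_1^i$ chosen per coordinate block. The stacked problem over $\mathbf{x} \in \R^{Nd}$ has hitting cost $h_t(\mathbf{x}) = \sum_i f_t^i(x^i) + \frac{\beta}{2}\sum_{(i,j)}\|A_t^{(i,j)}(x^i - x^j)\|_2^2$ and switching cost $\frac12\|\mathbf{x} - \mathbf{x}_{t-1}\|_2^2$; the update rule $\tilde{\mathbf{x}}_t = \argmin_{\mathbf{x}} h_t(\mathbf{x}) + \frac{1}{2}\sum_i \lambda_1^i \|x^i - x_{t-1}^i\|_2^2$ is precisely ROBD with $\lambda_2 = 0$ and a block-diagonal $\lambda_1$. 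The key point is that the dissimilarity term only \emph{adds} to the curvature of $h_t$: the global hitting cost $h_t$ is $m_\mathbf{x}$-strongly convex with $m_\mathbf{x} \geq \min_i \mu_i$ (the dissimilarity Hessian is positive semidefinite, degenerate exactly along the consensus directions $x^1 = \cdots = x^N$, but never decreasing strong convexity below that of the $f_t^i$'s). Since each $\lambda_1^i$ is set to the optimal ROBD weight $2/(1+\sqrt{1+4/\mu_i})$ for a $\mu_i$-strongly convex block, and ROBD's competitive ratio bound is monotone in the strong-convexity parameter, the worst case over blocks is governed by $\min_i \mu_i$.

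The third step is to apply the known competitive-ratio guarantee for ROBD (from \cite{GoelLinWierman19}, the centralized result referenced in the paper) to this stacked instance, which yields competitive ratio $\frac12 + \frac12\sqrt{1 + 4/\min_i \mu_i}$, matching the claimed value and — by Theorem \ref{main_thm: lower_bound_2} — optimal. The main obstacle I anticipate is the second step: carefully arguing that the heterogeneous, per-block choice of $\lambda_1^i$ (rather than a single scalar $\lambda_1$) still fits the ROBD competitive analysis, and that the dissimilarity cost's contribution to $h_t$ genuinely cannot \emph{help} beyond the $\min_i \mu_i$ bound while also never hurting. This requires either extending the scalar ROBD potential-function argument to a block-diagonal regularizer (tracking the potential $\|\tilde{\mathbf{x}}_t - \mathbf{x}_t^*\|$ with a block-weighted norm) or reducing to the scalar case by a per-block change of variables; the degeneracy of the dissimilarity Hessian means one cannot simply treat $h_t$ as uniformly $\mu$-strongly convex with a larger $\mu$, so the bound must be driven entirely by the $f_t^i$ curvature, consistent with the lower bound's "no helping effect" phenomenon.
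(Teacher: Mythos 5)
Your proposal follows essentially the same route as the paper: the paper likewise establishes equivalence of the auxiliary-variable problem with the coupled graph-regularized update (via the stationarity condition $z_e = (x^i+x^j)/2$, equivalent to your partial minimization over $\mathbf{z}$), and then proves the competitive ratio for that coupled update by exactly the block-weighted potential argument you flag as the remaining obstacle --- the potential $\sum_i \frac{\mu_i+\lambda_1^i}{2}\|x^{*,i}_t - x^{R,i}_t\|_2^2$, with the dissimilarity term absorbed into the hitting cost of both the algorithm and the optimal so that only the $f^i_t$ curvature drives the contraction, and the per-agent choice $\lambda_1^i(1+\lambda_1^i/\mu_i)=1$ collapsing the bound to $1/\min_i \lambda_1^i$. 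Your plan is correct; the only piece left unexecuted is that heterogeneous potential computation, which the paper carries out in Lemma \ref{lemma: decoupled-robd-strong-convexity_appendix} and Theorem \ref{main_thm: decoup_robd}.
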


Recall the competitive ratio lower bound of $\left(\frac{1}{2} + \frac{1}{2}\sqrt{1 + \frac{4}{\min_i \mu_i}}\right)$ in Theorem \ref{main_thm: lower_bound_2}. That performance is in fact achieved by an existent \textit{centralized} online-optimal algorithm \robd{} \cite{GoelLinWierman19}. However, our setting does not allow its implementation due to it highly centralized and coupled nature. 

Now, The variables $\{z_e\}_e$ corresponding to each edge ${e \in \mathcal{E}_t}$ helps to decouple the agents' actions. The big question is the careful placement of these variables to ensure no inherent bias introduced when we are changing the optimization domain from $\mathbf{x} \in \R^{Nd}$ to $(\mathbf{x},\mathbf{z}) \in \R^{Nd + |\mathcal{E}_t|d}$.

We circumvent the bias-problem by focusing on the optimality condition per-round and making changes at that level, and then bring it to objective level. The centralized optimality condition for each agent $i$ is
\begin{align}\label{eqn:centralized_optimality}
    &\nabla_{x^i} f_t^i(x^i) + \lambda_1^i (x^i - x^i_{t-1}) + \beta \sum_{\{j : (i,j) \in \mathcal{E}_t\}} (A^{(i,j)}_t)^T A^{(i,j)}_t \left[ A^{(i,j)}_t x^i - A^{(i,j)}_t x^j \right]= \mathbf{0}.
\end{align}
We want to decouple this entity, which can be done as
\begin{align}
        &\nabla_{x^i} f_t^i(x^i) + \lambda_1^i (x^i - x^i_{t-1}) + 2\beta \sum_{\mathcal{E}_t \ni e \ni i} (A^{(i,j)}_t)^T A^{(i,j)}_t \left[ A^{(i,j)}_t x^i - A^{(i,j)}_t z_e \right] = \mathbf{0} \label{eqn33} \\
        &(A^{(i,j)}_t)^T A^{(i,j)}_t \left[ A^{(i,j)}_t z_e - A^{(i,j)}_t x^i \right]+ (A^{(i,j)}_t)^T A^{(i,j)}_t \left[ A^{(i,j)}_t z_e - A^{(i,j)}_t x^j \right] = \mathbf{0}.
\end{align}
where the second condition amounts to $z_e = \frac{x^i + x^j}{2}$, owing to $(A^{(i,j)}_t)^T A^{(i,j)}_t \succ 0$. Putting it back in \eqref{eqn33} gives back \eqref{eqn:centralized_optimality}. The complete proof of Theorem \ref{main_thm:no_bias_CR}, which also explains the centralized optimality condition \eqref{eqn:centralized_optimality}, can be found in Appendix \ref{appendix_sec: decoup_robd}

\subsection{Decentralization without Lipschitz gradients}
Now, that we have an objective $\mathbb{F}_t(\mathbf{x},\mathbf{z})$ where agent decisions are not directly entangled, we can proceed to the decentralization process. Let's look at the $x^i$ component of $\mathbb{F}_t(\mathbf{x},\mathbf{z})$,
\begin{equation} \label{eqn: decentral_obj_indiv}
\begin{split}
     x^i_t = \argmin_{x^i} \sum_{i=1}^N &\bigg\{ f_t^i(x^i) + \frac{\lambda_1^i}{2}\|x^i - x_{t-1}^{i}\|_2^2 + \beta \sum\limits_{\mathcal{E}_t \ni e \ni i}\|A_t^{e} x^i - A_t^{e}z_e\|_{2}^2 \bigg\}.
\end{split}
\end{equation}
and it is clear that $x^i$ still has implicit coupling with $\{x^j: j\in \mathcal{N}_i\}$ through the auxiliary variables. The only way to solve $\Tilde{\mathbf{x}}_t, \mathbf{z}_* = \argmin \mathbb{F}_t(\mathbf{x},\mathbf{z})$ locally within the agents is by separating the optimization in $\mathbf{x}$ and $\mathbf{z}$.

Here, we bring in block optimization techniques, specifically the \textit{Alternating Minimization (\am{})} method \cite{BeckTetruashvili13}. The key idea is that solving \eqref{eqn: decentralized_robd_obj} iteratively separates the minimization in $\mathbf{x}$ and $\mathbf{z}$ while ensuring convergence towards the $\Tilde{\mathbf{x}}_t$. Starting from any point $((\mathbf{x}_t)_0,\mathbf{z}_0)$, chosen at the user's convenience, the iterative minimization proceeds as follows: At the $k$-th iteration,
\begin{align}
    (\mathbf{x}_t)_k = \argmin_\mathbf{x} \mathbb{F}_t(\mathbf{x},\mathbf{z}_{k-1})\label{eqn21}\\
    \mathbf{z}_k = \argmin_\mathbf{z} \mathbb{F}_t((\mathbf{x}_t)_k,\mathbf{z})\label{eqn22}.
\end{align}
The first minimization step, given by \eqref{eqn21},  is separable in $\mathbf{x}$, thanks to $\{z_e\}_{e\in \mathcal{E}_t}$, and can be performed \textit{locally} by each agent:
\begin{align}\label{eqn23}
    \left(x^i_t \right)_k = &\argmin\limits_{x \in \R^d} \bigg\{ f_t^i(x) + \frac{\lambda_1^i}{2}\|x - x_{t-1}^{i}\|_2^2 + \beta \sum\limits_{\mathcal{E}_t \ni e \ni i}\left\|A^e_t x - A^e_t (z_e)_{k-1}\right\|_{2}^2 \bigg\} \text{ } \forall \text{ } i.
\end{align}
Now, observe that the next minimization step \eqref{eqn22} translates to
\begin{align}\label{eqn20}
\begin{split}
    (A_t^{e})^T \times \left[ A_t^{e}(z_e)_k - \frac{A_t^{e}(x^i_t)_k + A_t^{e}(x^j_t)_k }{2} \right] = \mathbf{0}
\end{split}
\end{align}
for each edge $e = (i,j) \in \mathcal{E}_t$.
Since $(A_t^{e})^T A_t^{e} \succ 0$, this has the following unique solution:
\begin{align}\label{eqn: z-block_obj}
    \left(z_{(i,j)}\right)_k = \frac{(x^i_t)_k + (x^j_t)_k}{2}.
\end{align}
Communication of $(x^i_t)_k$ and $(x^j_t)_k$ across the edge $e = (i,j)$ allows each agent to calculate this average locally.

Recall the applications of SOCO and its decentralized forms we discussed in the literature. The hitting costs observed do not necessarily exhibit Lipschitz gradients. It is for this reason, worst-case guarantees of related algorithms in the literature \cite{ChenWierman15,ChenWierman16,ChenWierman18,GoelLinWierman19,GoelWierman19,LinGoelWierman20,ChristiansonWierman22,RuttenMukherjee23,ChristiansonWierman23} do not make this assumption. 

Our decentralization goal is to accommodate for non-smooth hitting costs, and the following result reflects our choice of \am{}:
\begin{proposition}\label{prop: alt_min_covergence}
Consider $\mathbbm{F}_t(\mathbf{x}_t,\mathbf{z}) = \sum_{i=1}^N f^i_t(x^i_t) + \frac{\lambda_1^i}{2}\|x^i_t - x^i_{t-1}\|_2^2 + \beta \sum_{\mathcal{E}_t \ni e \ni i} \|A^e_t x^i_t - A^e_t z_e\|_2^2,$ where $f^i_t$ is $\mu_i$-strongly convex and \textbf{possibly non-smooth}. Performing $K_t$ iterations of 
\begin{align}
    (\mathbf{x}_t)_k &= \argmin_\mathbf{x} \mathbb{F}_t(\mathbf{x},\mathbf{z}_{k-1})\\
    \mathbf{z}_k &= \argmin_\mathbf{z} \mathbb{F}_t((\mathbf{x}_t)_k,\mathbf{z})
\end{align}

allows the following convergence guarantee
\begin{align*}
    \mathbbm{F}_t(&(\mathbf{x}_t)_{K_t},\mathbf{z}_k) - \mathbbm{F}_t(\mathbf{\Tilde{x}}_t,\mathbf{z}_*) \leq \left(1-\frac{\sigma_t}{4\beta l} \right)^{K_t-1} \left(\mathbbm{F}_t((\mathbf{x}_t)_0,\mathbf{z}_0) - \mathbbm{F}_t(\Tilde{\mathbf{x}_t},\mathbf{z}_*)\right),
\end{align*}
where $\mathbf{\Tilde{x}}_t,\mathbf{z}_* = \argmin_{\mathbf{x},\mathbf{z}} \mathbbm{F}_t(\mathbf{x},\mathbf{z})$ and $\sigma_t$ is the strong convexity parameter of $\mathbbm{F}_t$.
\end{proposition}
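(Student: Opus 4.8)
The plan is to establish the geometric convergence of Alternating Minimization for the specific objective $\mathbbm{F}_t(\mathbf{x},\mathbf{z})$ by exploiting two structural features: (i) each block sub-problem in \eqref{eqn21}–\eqref{eqn22} has a \emph{unique} minimizer (since $f^i_t$ is strongly convex and $(A^e_t)^T A^e_t \succ 0$, so $\mathbbm{F}_t$ is strongly convex in $\mathbf{x}$ for fixed $\mathbf{z}$ and strongly convex in $\mathbf{z}$ for fixed $\mathbf{x}$), and (ii) the coupling between the two blocks is \emph{smooth}, in fact quadratic, even though the hitting costs $f^i_t$ are only strongly convex and possibly non-smooth. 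The key observation is that non-smoothness of $f^i_t$ only enters the $\mathbf{x}$-block, and the $\mathbf{z}$-block minimization \eqref{eqn22} is an unconstrained quadratic whose solution \eqref{eqn: z-block_obj} can be plugged back in to reduce $\mathbbm{F}_t$ to a function of $\mathbf{x}$ alone; I would want to run the convergence argument on this reduced problem or, more cleanly, directly on the pair.

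First I would record that $\mathbbm{F}_t$ is $\sigma_t$-strongly convex jointly in $(\mathbf{x},\mathbf{z})$, where $\sigma_t$ depends on $\{\mu_i\}$, $\beta$, the singular-value bound $\sqrt m$, and the structure of $\mathcal{G}_t$ (this is exactly the $\sigma_t$ appearing in \eqref{eqn:Kt_value}). Next I would show that the partial function $\mathbf{z} \mapsto \mathbbm{F}_t(\mathbf{x},\mathbf{z})$ and the composite ``optimal-$\mathbf{x}$ given $\mathbf{z}$'' map have a Lipschitz-smoothness constant controlled by $4\beta l$: the $\mathbf{z}$-dependence enters only through the quadratic terms $\beta\sum_e \|A^e_t x^i - A^e_t z_e\|_2^2$, whose Hessian in $(\mathbf{x},\mathbf{z})$ has operator norm bounded in terms of $l = \max$ squared singular value, times the maximum degree, times $\beta$; the constant $4\beta l$ is where the degree factor and the factor of $2$ from each edge contribution accumulate. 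Then I would invoke the standard two-block AM convergence bound (e.g.\ Beck--Tetruashvili \cite{BeckTetruashvili13}, or a direct sufficient-decrease plus strong-convexity telescoping argument): each outer iteration decreases the suboptimality gap $\mathbbm{F}_t((\mathbf{x}_t)_k,\mathbf{z}_k) - \mathbbm{F}_t(\Tilde{\mathbf{x}}_t,\mathbf{z}_*)$ by a multiplicative factor $\bigl(1 - \tfrac{\sigma_t}{4\beta l}\bigr)$, which iterated $K_t-1$ times from the initialization gap gives the stated bound. The shift by one in the exponent ($K_t - 1$ rather than $K_t$) comes from the first half-step being an $\mathbf{x}$-update from the user-chosen initial $(\mathbf{x}_t)_0,\mathbf{z}_0$, so the geometric contraction only begins accumulating after that.

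The main obstacle I anticipate is the non-smoothness of $f^i_t$: the textbook AM linear-rate theorems typically assume the whole objective is $L$-smooth, which fails here. The resolution I would pursue is that the \emph{block separability plus closed-form $\mathbf{z}$-update} lets one eliminate $\mathbf{z}$ and work with the reduced objective $g_t(\mathbf{x}) = \min_{\mathbf{z}} \mathbbm{F}_t(\mathbf{x},\mathbf{z})$, which inherits strong convexity from $\mathbbm{F}_t$; AM on $(\mathbf{x},\mathbf{z})$ then corresponds to a proximal-type step on $g_t$, and one only needs smoothness of the \emph{quadratic part} coupling the blocks — not of $f^i_t$ — to get the contraction. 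Concretely I would argue: (1) $\mathbf{z}_k$ solves \eqref{eqn22} exactly, so $\mathbbm{F}_t((\mathbf{x}_t)_k,\mathbf{z}_k) = g_t((\mathbf{x}_t)_k)$; (2) the $\mathbf{x}$-update \eqref{eqn23} is the exact minimization of $\mathbf{x}\mapsto\mathbbm{F}_t(\mathbf{x},\mathbf{z}_{k-1})$, which upper-bounds $g_t$ up to the quadratic gap $\beta\sum_e\|A^e_t z_e - A^e_t \bar x\|^2$ that is $4\beta l$-smooth; (3) combining the exact minimization with $\sigma_t$-strong convexity of $g_t$ via the standard ``sufficient decrease $\ge$ strong-convexity lower bound'' inequality yields the per-iteration contraction factor $1 - \sigma_t/(4\beta l)$. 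I would also double-check the edge cases $\beta = 0$ (trivial, no iteration needed) and $\mathcal{G}_t$ with isolated vertices (those agents converge in one step), and confirm that $\sigma_t < 4\beta l$ so the factor lies in $(0,1)$.
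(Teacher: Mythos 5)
Your proposal rests on the same two pillars as the paper's proof---joint $\sigma_t$-strong convexity of $\mathbbm{F}_t$ in $(\mathbf{x},\mathbf{z})$ and $4\beta l$-smoothness in the $\mathbf{z}$-block alone (Lemmas \ref{lemma: strong-convexity} and \ref{lemma: smoothness})---and the same Beck--Tetruashvili skeleton, so the overall route is essentially the paper's. The concrete mechanism differs, however. The paper never eliminates $\mathbf{z}$: it works on the half-step iterates $\mathbf{w}_{k+\frac{1}{2}}=(\mathbf{x}_{k+1},\mathbf{z}_k)$, observes that $\nabla_{\mathbf{x}}\mathbbm{F}_t(\mathbf{w}_{k+\frac{1}{2}})=\mathbf{0}$ by exactness of the $\mathbf{x}$-update so that the full gradient there equals the $\mathbf{z}$-partial gradient, bounds that gradient's squared norm by $8\beta l$ times the decrease produced by the subsequent $\mathbf{z}$-step (using only $\mathbf{z}$-smoothness), and converts gradient norm to suboptimality gap via the strong-convexity lower bound; this is exactly what yields $1-\sigma_t/(4\beta l)$ and the $K_t-1$ exponent. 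Your alternative elaboration via the reduced objective $g_t(\mathbf{x})=\min_{\mathbf{z}}\mathbbm{F}_t(\mathbf{x},\mathbf{z})$ and a majorize--minimize interpretation is workable in principle, but it hides a computation you have not done: the surrogate gap $\mathbbm{F}_t(\mathbf{x},\mathbf{z}_k)-g_t(\mathbf{x})=2\beta\sum_{e=(i,j)}\|A^e_t z_e - A^e_t\tfrac{x^i+x^j}{2}\|_2^2$ must be shown to be smooth \emph{in $\mathbf{x}$} with the same constant $4\beta l$, and it is not---its Hessian in $\mathbf{x}$ involves degree-dependent sums over the edges incident to each node---so the contraction factor you would obtain from that route is not obviously $1-\sigma_t/(4\beta l)$. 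Relatedly, your derivation of $4\beta l$ as ``$l$ times the maximum degree times $\beta$'' is off: no degree factor enters the $\mathbf{z}$-smoothness, because each $z_e$ appears in exactly the two terms belonging to its own edge, each contributing at most $2\beta l$; the degree only matters for smoothness in $\mathbf{x}$, which the argument deliberately never uses (that is precisely what accommodates non-smooth $f^i_t$). I would drop the reduced-objective detour and run the half-step gradient argument directly on the pair.
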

It is common knowledge that iterative optimization methods require Lipschitz gradients to guarantee descent towards the optimum at every iteration. However, we avoid this requirement for the hitting costs $\{f^i_t\}_{i=1}^N$ by utilizing a unique property of \am{}, that Lipchitz gradients are required only with respect to one of the blocks $\{\mathbf{x},\mathbf{z}\}$ and,
\begin{lemma}\label{main_lemma: smoothness}
$\mathbbm{F}_t(\mathbf{x},\mathbf{z})$ is $4\beta l$ smooth in $\mathbf{z}$.    
\end{lemma}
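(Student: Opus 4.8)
The plan is to compute the Hessian of $\mathbbm{F}_t(\mathbf{x},\mathbf{z})$ with respect to the block variable $\mathbf{z} = (z_e)_{e \in \mathcal{E}_t}$ and bound its largest eigenvalue by $4\beta l$. Observe that the only terms in $\mathbbm{F}_t$ that depend on $\mathbf{z}$ are the dissimilarity-type penalties $\beta \sum_{e \ni i}\|A^e_t x^i - A^e_t z_e\|_2^2$, so with $\mathbf{x}$ held fixed the map $\mathbf{z} \mapsto \mathbbm{F}_t(\mathbf{x},\mathbf{z})$ is a (separable) quadratic in the $z_e$'s. Writing $e = (i,j)$, the edge $e$ contributes $\beta\|A^e_t x^i - A^e_t z_e\|_2^2 + \beta\|A^e_t x^j - A^e_t z_e\|_2^2$, each of which is a quadratic in $z_e$ with Hessian $2\beta (A^e_t)^T A^e_t$. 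Summing the two, the block-Hessian $\nabla^2_{z_e}\mathbbm{F}_t = 4\beta (A^e_t)^T A^e_t$, and the full $\nabla^2_{\mathbf{z}}\mathbbm{F}_t$ is block-diagonal with these blocks (distinct edges share no $z$-variable), so its spectral norm is $\max_{e} 4\beta\,\|(A^e_t)^T A^e_t\|_2 = 4\beta \max_e \sigma_{\max}(A^e_t)^2$.

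First I would make the dependence-on-$\mathbf{z}$-only observation precise and note separability across edges, so that the smoothness constant is the worst single-edge constant. Next I would invoke the assumption from the Dissimilarity Costs definition that every $A^e_t$ has singular values in $[\sqrt m, \sqrt l]$; hence $\sigma_{\max}(A^e_t)^2 \le l$ and $\|(A^e_t)^T A^e_t\|_2 \le l$ for every edge. Combining, $\nabla^2_{\mathbf{z}}\mathbbm{F}_t \preceq 4\beta l\, I$, which is exactly the statement that $\mathbbm{F}_t$ is $4\beta l$-smooth in $\mathbf{z}$ (equivalently, $\nabla_{\mathbf{z}}\mathbbm{F}_t$ is $4\beta l$-Lipschitz). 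Since this is a quadratic, the Hessian bound is tight and no further first-order-Taylor estimate is needed; if one prefers to avoid Hessians because $f^i_t$ is only assumed strongly convex (not smooth), that is fine here since $f^i_t$ simply does not appear in the $\mathbf{z}$-block at all.

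There is no real obstacle: the lemma is essentially a bookkeeping computation. The only mild subtlety is the counting of how many dissimilarity terms touch a given $z_e$ --- one must be careful that each auxiliary variable $z_e$ appears in exactly two penalty terms (once for each endpoint of the edge $e$), which is precisely what gives the factor $4 = 2 \times 2$ rather than $2$. I would state this explicitly and keep the constants honest, since the same $\sigma_t/(4\beta l)$ ratio drives the convergence rate in Proposition~\ref{prop: alt_min_covergence} and the choice of $K_t$ in \eqref{eqn:Kt_value}.
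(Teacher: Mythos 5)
Your proof is correct and takes essentially the same route as the paper: the paper verifies the smoothness inequality by expanding the quadratic in $\Delta\mathbf{z}$ and bounding the second-order remainder by $l\|\Delta z_e\|_2^2$ per term, which is exactly your block-diagonal Hessian computation written as a direct Taylor expansion. Both arguments rest on the same two facts you highlight --- each $z_e$ appears in exactly two penalty terms (giving the factor $4 = 2\times 2$) and $(A^e_t)^T A^e_t \preceq l\,I$.
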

The contraction in $\mathbbm{F}_t$ is a function of $\min \{L_1,L_2\}$ where $L_1$ and $L_2$ are smoothness coefficients of $\mathbf{x}$ and $\mathbf{z}$ respectively. Lemma \ref{main_lemma: smoothness} allows $L_1 \to \infty$, meaning, non-smooth hitting costs $\{f^i_t\}_{i,t}$. The proof of the Proposition \ref{prop: alt_min_covergence} and the above lemma can be found in Appendix \ref{appendix_sec: decental_robd_l2}.

\subsection{SOCO Approximation framework}
Theorem \ref{main_thm:no_bias_CR} and Proposition \ref{prop: alt_min_covergence} together hint that $K_t \to \infty$ allows \acord{} to have a competitive ratio of $CR_* = \frac{1}{2} + \frac{1}{2}\sqrt{1+\frac{4}{\min_i \mu_i}}$, as we note in Corollary \ref{corr: acord_asym_opt}.

In practice, $K_t \to \infty$ cannot be implemented. However, the need for worst-case guarantees in applications like power grid operations, data-center management, etc \cite{LinWierman12,NarayanaswamyGarg12,LuAndrew13,KimGiannakis17}, demands quantification of \acord{}'s performance for $K_t < \infty$. In this case, the optimization problem in Theorem \ref{main_thm:no_bias_CR} is not being solved exactly, hence, it is not known if finite $K_t$ allows for a worst-case guarantee. In fact, it requires solving an unanswered question in the broader SOCO literature:
\begin{quote}
    \textit{``What is the cumulative effect of per-round numerical approximation errors on the worst-case performance for SOCO algorithms?"}
\end{quote}


All algorithms in the SOCO literature \cite{ChenWierman15,ChenWierman16,ChenWierman18,GoelWierman19,GoelLinWierman19,LiGuannan20,ShiGuannanWierman22,lin2022decentralized,ChristiansonWierman22,RuttenMukherjee23,LiShaoleiWierman23} model the online action $x_t$, at round $t$, as a solution of an optimization sub-problem, and assume that exact computation is possible. Consequently, numerical approximation errors and its effects have not been studied so far. Our following result is the first one to address this issue,
\begin{theorem}\label{main_thm: approx_robd_cr}
Consider a sequence of $\mu$-strongly convex hitting costs $\{f^i_t(\cdot)\}_{t=1}^T$ and squared $\ell_2$-norm switching costs. The online action sequence $\{x_t\}_{t=1}^T$ \textbf{approximating}
\begin{align}
    \Tilde{x}_t = \argmin_{x \in \R^d} \underbrace{f_t(x) + \frac{\lambda_1}{2}\|x - x_{t-1}\|_2^2}_{F_t(x)}
\end{align}
such that 
\begin{align}
    F_t(x_t) - F_t(\Tilde{x}_t) &= \epsilon_1\\
    \|x_t - \Tilde{x}_t\|_2 &= \epsilon_2
\end{align}
satisfies the following
\begin{align}
    \text{Cost}_{\alg{}} \leq \Bigg( &\frac{CR_{\robd{}} + 2\epsilon_2 \frac{\sqrt{\mu + \lambda_1}}{\lambda_1}}{1 - 2\epsilon_2 \frac{\sqrt{\mu + \lambda_1}}{\lambda_1}} \Bigg) \text{Cost}_{\opt{}}+ \left(\frac{\frac{\epsilon_2 \sqrt{\mu + \lambda_1}}{2\lambda_1} + \frac{\epsilon_1}{\lambda_1}}{1 - 2\epsilon_2 \frac{\sqrt{\mu + \lambda_1}}{\lambda_1}} \right)T
\end{align}
where $\epsilon_1, \epsilon_2$ can be suitably chosen to obtain the desired asymptotic competitive ratio as per \eqref{eqn: CR_defn}.
\end{theorem}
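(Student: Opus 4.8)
\textbf{Proof proposal for Theorem \ref{main_thm: approx_robd_cr}.}

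The plan is to adapt the competitive-analysis machinery for \robd{} (from \cite{GoelLinWierman19}) to the perturbed iterates, tracking how the two error quantities $\epsilon_1,\epsilon_2$ propagate through the potential/amortization argument. The standard approach for SOCO competitive ratios is to exhibit a potential function $\Phi_t$ — typically $\Phi_t = \frac{\lambda_1}{2}\|x_t - x_t^*\|_2^2$ or a close variant measuring the gap to the offline optimal trajectory — and to show a per-round inequality of the form $\text{hit}_t + \text{switch}_t + \Phi_t - \Phi_{t-1} \le \text{CR}\cdot(\text{offline cost at }t)$. For the \emph{exact} \robd{} action $\tilde x_t$ this inequality holds with $\text{CR} = \text{CR}_{\robd{}}$. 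First I would recall that argument and identify exactly where the optimality condition $\nabla F_t(\tilde x_t) = \mathbf{0}$ is invoked — this is the only place the exactness of $\tilde x_t$ enters.

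Next I would replace $\tilde x_t$ by $x_t$ everywhere and quantify the slack. Since $x_t$ is only an $(\epsilon_1,\epsilon_2)$-approximate minimizer of the $(\mu+\lambda_1)$-strongly convex, possibly non-smooth $F_t$, I would use two consequences: (i) $F_t(x_t) = F_t(\tilde x_t) + \epsilon_1$ directly bounds the extra hitting-plus-switching cost paid at round $t$; (ii) $\|x_t - \tilde x_t\|_2 = \epsilon_2$ lets me control the displacement of the potential, via $\|x_t - x_t^*\| \le \|\tilde x_t - x_t^*\| + \epsilon_2$ and the triangle inequality, and also the change in the switching term $\|x_t - x_{t-1}\|$ versus $\|\tilde x_t - x_{t-1}\|$. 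The factor $\sqrt{\mu+\lambda_1}$ appearing in the statement strongly suggests that one converts a function-value gap into a distance gap (or vice versa) using strong convexity: from $\epsilon_1 = F_t(x_t)-F_t(\tilde x_t) \ge \frac{\mu+\lambda_1}{2}\|x_t-\tilde x_t\|^2$ one could even derive $\epsilon_2 \le \sqrt{2\epsilon_1/(\mu+\lambda_1)}$, but since both are given as free parameters I would instead keep them separate and let the cross-terms $\epsilon_2\sqrt{\mu+\lambda_1}$ emerge from bounding inner products like $\langle \nabla F_t(\tilde x_t) \text{-surrogate}, x_t - \tilde x_t\rangle$ or from $\lambda_1\langle x_t - x_{t-1}, \cdot\rangle$ terms scaled against $\|x_t-\tilde x_t\| = \epsilon_2$.

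The key technical step is to re-derive the amortized per-round bound with these perturbations, collecting all $\epsilon$-dependent slack on one side. I expect it to take the shape
\begin{align*}
    \text{hit}_t(x_t) + \text{switch}_t(x_t) + \Phi_t - \Phi_{t-1} \le \text{CR}_{\robd{}}\cdot m_t + a\,\epsilon_2 \sqrt{\mu+\lambda_1}\,\big(\text{hit}_t + \text{switch}_t\big) + b\,\epsilon_1 + c\,\epsilon_2\sqrt{\mu+\lambda_1},
\end{align*}
where $m_t$ is the offline increment and $a,b,c$ are explicit constants with $\lambda_1$ in the denominator. Moving the $\epsilon_2$-proportional fraction of $\text{Cost}_{\alg{}}$ to the left-hand side and dividing through by $\big(1 - 2\epsilon_2\frac{\sqrt{\mu+\lambda_1}}{\lambda_1}\big)$ — which requires $\epsilon_2$ small enough that this is positive — then telescoping $\Phi_t - \Phi_{t-1}$ over $t=1,\dots,T$ (with $\Phi_0 = 0$, $\Phi_T \ge 0$) yields exactly the claimed bound, the additive term picking up the factor $T$ because $b\epsilon_1 + c\epsilon_2\sqrt{\mu+\lambda_1}$ is incurred once per round.

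The main obstacle I anticipate is handling non-smoothness cleanly in step two: without Lipschitz gradients I cannot expand $F_t(x_t)$ around $\tilde x_t$ via a gradient term, so I must work with subgradients / the strong-convexity lower bound $F_t(x_t) \ge F_t(\tilde x_t) + \frac{\mu+\lambda_1}{2}\epsilon_2^2$ and the optimality inequality $\langle g, x_t - \tilde x_t\rangle \ge 0$ for $g \in \partial F_t(\tilde x_t)$ — being careful that the potential-function argument of \cite{GoelLinWierman19} only ever used these subgradient-level facts. A secondary subtlety is making sure the perturbation does not corrupt the recursive structure: the approximate action $x_t$ becomes the anchor $x_{t-1}$ for round $t+1$'s switching cost, so errors must not compound across rounds; the telescoping potential argument is precisely what prevents this, but I would verify that the constants stay uniform in $t$ and that the $o(1)$/asymptotic-competitive-ratio conclusion follows by choosing $\epsilon_1,\epsilon_2 = \Theta(1/\text{poly}(T))$ as anticipated by the $K_t = \mathcal{O}(\log T)$ choice elsewhere in the paper.
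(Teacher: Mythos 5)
Your proposal is correct and follows essentially the same route as the paper: the paper likewise starts from the exact \robd{} per-round potential inequality, uses $\epsilon_1$ as direct additive slack in the hitting-plus-switching cost, bounds the potential displacement $\phi(\tilde x_t,x^*_t)-\phi(x_t,x^*_t)$ via Cauchy--Schwarz and AM--GM to produce the $\epsilon_2\sqrt{\mu+\lambda_1}$ cross-terms proportional to the costs of \alg{} and \opt{}, and then rearranges and divides by $\bigl(1-2\epsilon_2\tfrac{\sqrt{\mu+\lambda_1}}{\lambda_1}\bigr)$ exactly as you anticipate. The only cosmetic difference is that the paper invokes the \robd{} inequality as a black box rather than re-deriving it from subgradient optimality conditions.
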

The standard potential method analysis typically used for proving competitive guarantees in SOCO \cite{ChenWierman18,GoelLinWierman19,RuttenMukherjee23,ChristiansonWierman22,ChristiansonWierman23,BhuyanMukherjee24} fails due to approximation errors at each round. To circumvent this issue, we translate this cumulative bias to the product of the approximation error $\epsilon_2$ and the total \emph{path length gap} between the online algorithm \alg{} and \opt{}, that is, $\epsilon_2 \sum_{t=1}^T \|x_t - x^*_t\|_2$. Converting it into the hitting costs of the \alg{} and \opt{}, along with some algebra, gives the result in Theorem \ref{main_thm: approx_robd_cr}.

For \acord{}, the heterogeneous parameters $\{\lambda_1^i\}_{i=1}^N$ necessitate a more delicate handling of the aforementioned error sequence, especially, converting it back into the sum of individual hitting and switching costs for \textit{heterogeneous} agents. We present the modified potential method and the analysis for \acord{} in detail in Appendix \ref{appendix_sec: approx_robd}. 


\section{Multi-agent SOCO Preliminaries}\label{appendix_sec: decoup_robd}

\begin{lemma}\label{lemma: decoupled-robd-strong-convexity_appendix}
Define $$F_t(\mathbf{x}) := \sum_{i=1}^N f_t^i(x^i) + \frac{\beta}{2}\|\mathbf{x}\|_{G_t}^2 + \sum_{i=1}^N \frac{\lambda_1^i}{2}\|x^i - x_{t-1}^{R,i}\|_2^2 .$$ It satisfies
    \begin{align*}
        F_t\left(\mathbf{x}_t^* \right) \geq F_t \left(x_t^R \right) + \sum_{i=1}^N \left(\frac{\mu_i + \lambda_1^i}{2} \right)\left\| x^{*,i}_t - x^{R,i}_t \right\|_2^2 
    \end{align*}
\end{lemma}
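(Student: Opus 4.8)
The plan is to exploit strong convexity of $F_t$ together with the first-order optimality of its minimizer $x_t^R$. First I would verify that $F_t$ is exactly $(\mu_i + \lambda_1^i)$-strongly convex ``in each block'' $x^i$: the term $f_t^i(x^i)$ contributes $\mu_i$, the regularizer $\frac{\lambda_1^i}{2}\|x^i - x_{t-1}^{R,i}\|_2^2$ contributes $\lambda_1^i$, and the coupling term $\frac{\beta}{2}\|\mathbf{x}\|_{G_t}^2$ is convex (positive semidefinite $G_t$, the unsigned Laplacian of $\mathcal{G}_t$) so it only helps. Hence for any $\mathbf{x}, \mathbf{y}$,
\begin{align}
    F_t(\mathbf{y}) \geq F_t(\mathbf{x}) + \langle \nabla F_t(\mathbf{x}), \mathbf{y} - \mathbf{x}\rangle + \sum_{i=1}^N \frac{\mu_i + \lambda_1^i}{2}\|y^i - x^i\|_2^2,
\end{align}
where at non-smooth points $\nabla F_t$ is read as any subgradient.

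Next I would instantiate this with $\mathbf{x} = x_t^R$ (the minimizer of $F_t$ over $\R^{Nd}$) and $\mathbf{y} = \mathbf{x}_t^*$. Since $x_t^R$ is an unconstrained minimizer, $\mathbf{0} \in \partial F_t(x_t^R)$, so the inner-product term vanishes for the appropriate choice of subgradient. This yields immediately
\begin{align}
    F_t(\mathbf{x}_t^*) \geq F_t(x_t^R) + \sum_{i=1}^N \frac{\mu_i + \lambda_1^i}{2}\|x_t^{*,i} - x_t^{R,i}\|_2^2,
\end{align}
which is the claim.

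The only genuine subtlety — and the step I expect to be the main obstacle — is the non-smoothness of the hitting costs $f_t^i$: one cannot literally write $\nabla F_t(x_t^R) = \mathbf{0}$. The clean way around this is to phrase strong convexity via subgradients from the start, or to invoke the standard fact that for a $\sigma$-strongly convex function $g$ with minimizer $x^\star$ one has $g(y) \geq g(x^\star) + \frac{\sigma}{2}\|y - x^\star\|_2^2$ for all $y$, with no smoothness needed. Applying this block-wise (or, more carefully, treating $F_t$ as strongly convex in the full variable $\mathbf{x}$ with the block-diagonal modulus $\mathrm{diag}((\mu_i+\lambda_1^i) I_d)_{i=1}^N$, and noting the $G_t$ term preserves this) closes the argument. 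A secondary point worth stating explicitly is why the $\beta$-weighted Laplacian term is convex and thus contributes nothing negative to the modulus — this is just $\|\mathbf{x}\|_{G_t}^2 = \sum_{(i,j)\in\mathcal{E}_t}\|x^i - x^j\|_2^2 \geq 0$ with a PSD quadratic form — but it needs a line so the reader sees the modulus is not diluted by the coupling.
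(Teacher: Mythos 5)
Your proposal is correct and follows essentially the same route as the paper: both decompose $F_t$ into the per-agent $(\mu_i+\lambda_1^i)$-strongly convex pieces $f_t^i(x^i) + \frac{\lambda_1^i}{2}\|x^i - x_{t-1}^{R,i}\|_2^2$ plus the PSD quadratic $\frac{\beta}{2}\|\mathbf{x}\|_{G_t}^2$, kill the first-order term via optimality of $x_t^R$, and apply block-wise strong convexity. Your explicit handling of non-smooth $f_t^i$ via subgradients is a small refinement the paper glosses over (it writes $\nabla f_t^i$ directly), but it does not change the argument.
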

with
\begin{align}
    G_t = \begin{bmatrix}
        \sum_{j \in \mathcal{N}_1}\Tilde{A}^{(i,j)} & -\Tilde{A}^{(1,2)} \mathbbm{1}_{(1,2) \in \mathcal{E}_t} & \ldots & -\Tilde{A}^{(1,N)} \mathbbm{1}_{(1,N) \in \mathcal{E}_t}\\
        \vdots & \ldots & \ddots & \vdots\\
        -\Tilde{A}^{(N,1)} \mathbbm{1}_{(N,1) \in \mathcal{E}_t} & \ldots & -\Tilde{A}^{(N,N-1)} \mathbbm{1}_{(N,N-1) \in \mathcal{E}_t} & \sum_{j \in \mathcal{N}_N} \Tilde{A}^{(N,j)}
    \end{bmatrix}
\end{align}
where $\Tilde{A}^{(i,j)} = \left(A^{(i,j)}_t \right)^T A^{(i,j)}_t = \left(A^{(j,i)}_t \right)^T A^{(j,i)}_t = \Tilde{A}^{(j,i)}$ and $\mathcal{N}_i = \{j\in [N] \backslash \{i\}: (i,j) \in \mathcal{E}_t \}$ for each $i \in [N]$.
\begin{proof}
Through this simple fact,
\begin{align}
    \frac{1}{\beta}\nabla^2_{x^i_t, x^j_t} \sum_{(i,j) \in \mathcal{E}_t} s_t^{(i,j)}\left(x^i_t, x^j_t\right) &= \frac{1}{\beta}\nabla^2_{x^i_t, x^j_t} s_t^{(i,j)}\left(x^i_t, x^j_t\right)\\
    &= -\left(A^{(i,j)}_t \right)^T A^{(i,j)}_t,\\
    \frac{1}{\beta}\nabla^2_{x^i_t} \sum_{(i,j) \in \mathcal{E}_t} s_t^{(i,j)}\left(x^i_t, x^j_t\right) &= \frac{1}{\beta}\nabla^2_{x^i_t} \sum_{j \in \mathcal{N}_i} s_t^{(i,j)}\left(x^i_t, x^j_t\right)\\
    &= \sum_{j \in \mathcal{N}_1} \left(A^{(i,j)}_t \right)^T A^{(i,j)}_t
\end{align}
one can see that the dissimilarity cost defined in \eqref{eqn: dissimilarity_cost_defn} can be re-written as
\begin{align}
    \sum_{(i,j) \in \mathcal{E}_t} s_t^{(i,j)}\left(x^i_t, x^j_t\right) 
    = \sum_{(i,j) \in \mathcal{E}_t} \frac{\beta}{2}\left\|A_t^{(i,j)} x^i_t  - A^{(i,j)}_t x^j_t \right\|_2^2 = \frac{\beta}{2}\|\mathbf{x}\|_{G_t}^2.
\end{align}
We consider the following analysis for any set of hyper-parameters $\{\lambda_1^i\}_{i=1}^N \in (0,1]^N$. Since $x^R_t$ is the minimum of $F_t(x)$,
\begin{align}
    0 = \nabla F_t\left(\mathbf{x}_t^R \right) = \sum_{i=1}^N \left(\nabla f_t^i\left(x^{R,i}_t \right) + \lambda_1^i\left(x^{R,i}_t - x_{t-1}^{R,i} \right) \right) + \beta G_t x^R_t
\end{align}
Now, let $F_t^i(x^i):= f_t^i(x^i) + \frac{\lambda_1^i}{2}\|x^i - x_{t-1}^{R,i}\|_2^2$. This function is $(\mu_i + \lambda_1^i)$-strongly convex. Now,
\begin{align}
    F_t(x^*_t) &= F_t(x^*_t) - \left \langle \nabla F_t\left(x_t^R \right), x^*_t - x^R_t \right \rangle\\
    &= \sum_{i=1}^N \left(F_t^i\left(x^{*,i}_t \right) - \left \langle \nabla F_t^i\left(x_t^{R,i} \right), x^{*,i}_t - x^{R,i}_t \right \rangle \right) + \frac{\beta}{2}\|x^*_t\|_{G_t}^2 - \left \langle \beta G_t x^R_t, x^*_t - x^R_t \right \rangle\\
    &= \sum_{i=1}^N \left(F_t^i\left(x^{*,i}_t \right) - \left \langle \nabla F_t^i\left(x_t^{R,i} \right), x^{*,i}_t - x^{R,i}_t \right \rangle \right) + \frac{\beta}{2}\|x^R_t\|_{G_t}^2 \label{eqn1}\\ 
    &\geq \sum_{i=1}^N F_t^i\left(x^{R,i}_t \right) + \left(\frac{\mu_i + \lambda_1^i}{2} \right) \left\| x^{*,i}_t - x^{R,i}_t \right\|_2^2 + \frac{\beta}{2}\left\|x^R_t \right\|_{G_t}^2 \label{eqn2}\\ 
    &= F_t\left(\mathbf{x}^R_t\right) +  \sum_{i=1}^N \left(\frac{\mu_i + \lambda_1^i}{2} \right)\left\| x^{*,i}_t - x^{R,i}_t \right\|_2^2 
\end{align}
\end{proof}

\begin{theorem}\label{main_thm: decoup_robd}
    For hitting cost sequence $\{f^i_t\}_{i,t}$, squared $\ell_2$-norm switching costs and dissimilarity costs \eqref{eqn: dissimilarity_cost_defn}, $\beta \geq 0$ and graph $\{\mathcal{G}_t\}_{t=1}^T = \left\{\left([N],\mathcal{E}_t \right)\right\}_{t=1}^T$, the following action sequence,
    \begin{align}
        \mathbf{x}^R_t = \argmin_{\mathbf{x} = (x^1,\ldots, x^N)} &\sum_{i=1}^N \bigg[f_t^i(x^i) + \frac{\lambda_1^i}{2}\|x^i - x_{t-1}^{R,i}\|_2^2 + \frac{\beta}{2}\sum_{(i,j) \in \mathcal{E}_t}\left\|A^{(i,j)}_t x^i_t - A^{(i,j)}_t x^j_t \right\|_2^2\bigg]
    \end{align}
    has the optimal competitive ratio
    \begin{align}
        \text{CR} = \frac{1}{2} + \frac{1}{2}\sqrt{1+\frac{4}{\min_i \mu_i}}
    \end{align}
    with \textbf{local} hyperparameters $\{\lambda_1^i\}_{i=1}^N$,
    \begin{align}
        \lambda_1^i = \frac{2}{1+\sqrt{1+\frac{4}{\mu_i}}}.
    \end{align}
\end{theorem}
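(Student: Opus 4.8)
The plan is an amortized (potential‑function) competitive analysis, of the kind used for \robd{} in \cite{GoelLinWierman19}, with Lemma~\ref{lemma: decoupled-robd-strong-convexity_appendix} as the engine: it folds the entire dissimilarity cost $\tfrac\beta2\|\mathbf{x}\|_{G_t}^2$ into the regularized objective and delivers an \emph{agent‑separable} strong‑convexity correction. Since the offline objective \eqref{eqn: cumulative_obj} is strongly convex, $\opt{}$ is well defined; normalize $x_0^{R,i}=x_0^{*,i}$. Write $\text{CR}_{*}^{(i)}:=\tfrac12+\tfrac12\sqrt{1+4/\mu_i}$, so $\text{CR}_{*}=\max_i\text{CR}_{*}^{(i)}$, and introduce the potential $\Phi_t=\sum_{i=1}^N\tfrac{\eta_i}{2}\|x_t^{R,i}-x_t^{*,i}\|_2^2$ with $\eta_i=\mu_i(\text{CR}_{*}^{(i)})^2$. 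The goal is the per‑round inequality $\text{cost}_t(\mathbf{x}^R)+\Phi_t-\Phi_{t-1}\le\text{CR}_{*}\cdot\text{cost}_t(\mathbf{x}^*)$, where $\text{cost}_t(\cdot)$ is the round‑$t$ hitting $+$ switching $+$ dissimilarity cost; summing over $t\in[T]$ and telescoping (using $\Phi_0=0$, $\Phi_T\ge 0$) then yields $\text{Cost}_{\alg{}}[1,T]\le\text{CR}_{*}\,\text{Cost}_{\opt{}}[1,T]$ for $\alg{}=\{\mathbf{x}_t^R\}_{t=1}^T$, and together with the matching lower bound of Theorem~\ref{main_thm: lower_bound_2} this gives the claimed optimal ratio.

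For the per‑round inequality, plug $\mathbf{x}_t^*$ into Lemma~\ref{lemma: decoupled-robd-strong-convexity_appendix} and set $u_i:=x_t^{R,i}-x_{t-1}^{R,i}$, $w_i:=x_t^{*,i}-x_t^{R,i}$, $\delta_i:=x_t^{*,i}-x_{t-1}^{*,i}$, so $x_t^{*,i}-x_{t-1}^{R,i}=u_i+w_i$ and $x_{t-1}^{*,i}-x_{t-1}^{R,i}=u_i+w_i-\delta_i$. Expanding the lemma in these variables, and rewriting $\tfrac12\|u_i\|_2^2=\tfrac{\lambda_1^i}{2}\|u_i\|_2^2+\tfrac{1-\lambda_1^i}{2}\|u_i\|_2^2$ to pass from the regularizer weight to the true switching weight, gives on one hand
\begin{align*}
\sum_i f_t^i(x_t^{R,i})+\tfrac12\sum_i\|u_i\|_2^2+\tfrac\beta2\|\mathbf{x}_t^R\|_{G_t}^2\;\le\;\Big(\sum_i f_t^i(x_t^{*,i})+\tfrac\beta2\|\mathbf{x}_t^*\|_{G_t}^2\Big)+\sum_i\Big(\tfrac{1-\lambda_1^i}{2}\|u_i\|_2^2+\tfrac{\lambda_1^i}{2}\|u_i+w_i\|_2^2-\tfrac{\mu_i+\lambda_1^i}{2}\|w_i\|_2^2\Big),
\end{align*}
and, on the other hand (discarding the nonnegative $\sum_i f_t^i(x_t^{R,i})+\tfrac\beta2\|\mathbf{x}_t^R\|_{G_t}^2$), the first‑order consequence
\begin{align*}
\sum_i f_t^i(x_t^{*,i})+\tfrac\beta2\|\mathbf{x}_t^*\|_{G_t}^2\;\ge\;\sum_i\Big(\tfrac{\mu_i}{2}\|w_i\|_2^2-\lambda_1^i\langle u_i,w_i\rangle\Big),
\end{align*}
which, agent by agent, plays the role $\opt{}$'s hitting cost plays in the single‑agent \robd{} analysis (the cross‑agent coupling being reabsorbed via convexity of $s_t$). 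Using $\text{CR}_{*}\ge 1$ to keep a $(\text{CR}_{*}-1)$ fraction of $\opt{}$'s hitting and dissimilarity cost in reserve, and writing $\phi_{t-1}^i=\tfrac{\eta_i}{2}\|u_i+w_i-\delta_i\|_2^2$, the round inequality follows from the per‑agent bound
\begin{align*}
&\tfrac{1-\lambda_1^i}{2}\|u_i\|_2^2+\tfrac{\lambda_1^i}{2}\|u_i+w_i\|_2^2-\tfrac{\mu_i+\lambda_1^i}{2}\|w_i\|_2^2+\tfrac{\eta_i}{2}\|w_i\|_2^2-\tfrac{\eta_i}{2}\|u_i+w_i-\delta_i\|_2^2 \\
&\qquad\qquad\le\;(\text{CR}_{*}^{(i)}-1)\Big(\tfrac{\mu_i}{2}\|w_i\|_2^2-\lambda_1^i\langle u_i,w_i\rangle\Big)+\tfrac{\text{CR}_{*}^{(i)}}{2}\|\delta_i\|_2^2
\end{align*}
for every $i$, together with $\text{CR}_{*}^{(i)}\le\text{CR}_{*}$ and the nonnegativity of the individual hitting costs when aggregating over $i$.

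The last display is a quadratic form in $(u_i,w_i,\delta_i)$ whose worst case is attained when these vectors are collinear, so it reduces to checking that a $3\times3$ symmetric matrix is negative semidefinite. This is exactly where the prescribed $\lambda_1^i=2/(1+\sqrt{1+4/\mu_i})$ enters: it is equivalent to the identities $(\lambda_1^i)^2=\mu_i(1-\lambda_1^i)$, $\mu_i+\lambda_1^i=\mu_i/\lambda_1^i$, $\lambda_1^i\,\text{CR}_{*}^{(i)}=1$, and $\mu_i(\text{CR}_{*}^{(i)})^2=\text{CR}_{*}^{(i)}/(\text{CR}_{*}^{(i)}-1)=\eta_i$. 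Eliminating $\delta_i$ by completing the square and applying these identities, every surviving coefficient vanishes, i.e.\ the per‑agent inequality holds with equality in the worst case — which is precisely why $\text{CR}_{*}$ is not merely an upper bound but the optimal ratio, with the extremal agent being the one attaining $\min_i\mu_i$. The main obstacle is the bookkeeping of this amortized argument: reconciling the true switching weight (which is $1$, not $\lambda_1^i$) with the regularizer, tracking $\opt{}$'s hitting and dissimilarity costs through the heterogeneous weights $\{\lambda_1^i\}_i$ and per‑agent ratios $\{\text{CR}_{*}^{(i)}\}_i$ so that they aggregate correctly to the global $\text{CR}_{*}$, and absorbing the cross‑agent coupling via Lemma~\ref{lemma: decoupled-robd-strong-convexity_appendix} and convexity of the dissimilarity cost; once these are in place, the argument is substitution and summation.
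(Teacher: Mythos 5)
Your proposal follows the same architecture as the paper's proof: Lemma~\ref{lemma: decoupled-robd-strong-convexity_appendix} supplies the agent-separable strong-convexity surplus, a quadratic potential in $\|x_t^{R,i}-x_t^{*,i}\|_2$ is telescoped, and the choice $\lambda_1^i=2/(1+\sqrt{1+4/\mu_i})$ makes the per-round quadratic form degenerate. Your per-agent inequality is in fact correct: with $\eta_i=\mu_i(\text{CR}_*^{(i)})^2=\mu_i\text{CR}_*^{(i)}+1$ the difference of the two sides collapses to $\frac{1}{2\mu_i\text{CR}_*^{(i)}}\|\mu_i\text{CR}_*^{(i)}(u_i+w_i)-\eta_i\delta_i\|_2^2\ge 0$, so the $3\times3$ form is rank-one PSD exactly as you claim.

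The gap is in the aggregation over heterogeneous agents. After summing your per-agent bounds, the right-hand side carries $H_t^*+\sum_i(\text{CR}_*^{(i)}-1)a_i$ with $a_i=\frac{\mu_i}{2}\|w_i\|_2^2-\lambda_1^i\langle u_i,w_i\rangle$, and you propose to absorb the weighted sum into the reserved $(\text{CR}_*-1)H_t^*$ using your ``first-order consequence.'' But that consequence only controls the \emph{unweighted} sum $\sum_i a_i\le H_t^*$. The individual $a_i$ can be negative (take $\langle u_i,w_i\rangle$ large and positive), and any attempt at a per-agent version leaves behind the dissimilarity-gradient term $\beta\langle [G_t\mathbf{x}_t^R]_i,w_i\rangle$, which has no sign agent-by-agent and only telescopes into $\frac{\beta}{2}\|\cdot\|_{G_t}^2$ after the unweighted sum over $i$. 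Since the weights $\text{CR}_*^{(i)}-1$ differ across agents precisely when the $\mu_i$ do, the needed inequality $\sum_i(\text{CR}_*^{(i)}-1)a_i\le(\text{CR}_*-1)H_t^*$ does not follow and can fail (an agent with large $\mu_i$, hence small weight, can carry a large negative $a_i$ that subsidizes $\sum_i a_i\le H_t^*$ without subsidizing the weighted sum). Your argument is therefore complete only in the homogeneous case $\mu_i\equiv\mu$. The paper avoids this trap by never weighting \opt{}'s hitting cost per agent: it keeps \alg{}'s switching costs with their $\lambda_1^i$ weights, uses the identity $\lambda_1^i(1+\lambda_1^i/\mu_i)=1$ so that \opt{}'s switching costs appear with coefficient exactly $1$, and only at the end rescales the \emph{entire} inequality by $1/\min_i\lambda_1^i$ — a coarser step that needs only $H_t^R\ge0$ and $\lambda_1^i\le1$, never a sign on any per-agent quantity. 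To repair your route you would need a per-agent bound whose $a_i$-coefficient is the common $\text{CR}_*-1$ (with $\eta_i$ re-tuned so the form stays PSD when $\text{CR}_*\lambda_1^i>1$), or simply adopt the paper's uniform rescaling. One minor point: the tightness of your quadratic form shows your analysis cannot be sharpened, but optimality of $\text{CR}_*$ is supplied by the adversarial instance of Theorem~\ref{main_thm: lower_bound_2}, which you correctly invoke earlier.
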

\begin{proof}
\begin{align}
    x^R_t = \argmin_{x = (x^1,\ldots, x^N)} \sum_{i=1}^N \left(f_t^i(x^i) + \frac{\lambda_1^i}{2}\|x^i - x_{t-1}^i\|_2^2 \right) + \frac{\beta}{2}\|x\|_{G_t}^2
\end{align}
and
\begin{align}
    F_t\left(\mathbf{x}^R_t \right) = \underbrace{\sum_{i=1}^N f_t^i(x^{R,i}_t) + \frac{\beta}{2}\|\mathbf{x}^R_t\|_{G_t}^2}_{H^R_t} + \sum_{i=1}^N \lambda_1^i \underbrace{\frac{1}{2}\|x^{R,i}_t - x_{t-1}^{R,i}\|_2^2}_{M^{R,i}_t}.
\end{align}
For the global hindsight action at $t$, $x_t^* = \left(x_t^{*,1},\ldots,x_t^{*,N} \right)$, define $H^*_t$ as the total hitting plus dissimilarity cost and $M^{i,*}_t$ as the switching cost of the $i^{th}$ agent. We have,
\begin{align}
    F_t\left(\mathbf{x}_t^* \right) \geq F_t \left(\mathbf{x}_t^R \right) + \sum_{i=1}^N \left(\frac{\mu_i + \lambda_1^i}{2} \right)\left\| x^{*,i}_t - x^{R,i}_t \right\|_2^2 
\end{align}
which means
\begin{align}
    H_t^R + \sum_{i=1}^N \lambda_1^i M^{R,i}_t + \sum_{i=1}^N \left(\frac{\mu_i + \lambda_1^i}{2} \right)\left\| x^{*,i}_t - x^{R,i}_t \right\|_2^2 
    \leq H^*_t + \sum_{i=1}^N \frac{\lambda_1^i}{2} \left\| x^{*,i}_t - x^{R,i}_{t-1} \right\|_2^2  
\end{align}
Now, consider the potential function 
\begin{align}\label{eqn: potential_fn}
    \phi\left((x^R_t,x^*_t\right) = \sum_{i=1}^N \left(\frac{\mu_i + \lambda_1^i}{2} \right)\left\| x^{*,i}_t - x^{R,i}_t \right\|_2^2 
\end{align}
and subtract $\phi_{t-1}\left(x^R_{t-1},x^*_{t-1}\right)$ to have
\begin{align}\label{eqn:potential_eqn}
\begin{split}
    H_t^R &+ \sum_{i=1}^N \lambda_1^i M^{R,i}_t + \Delta \phi_t\\
    &\leq H^*_t + \underbrace{\sum_{i=1}^N \frac{\lambda_1^i}{2} \left\| x^{*,i}_t - x^{R,i}_{t-1} \right\|_2^2 - \sum_{i=1}^N \left(\frac{\mu_i + \lambda_1^i}{2} \right)\left\| x^{*,i}_{t-1} - x^{R,i}_{t-1} \right\|_2^2}_{X}
\end{split}
\end{align}
We take a closer look at $X$,
\begin{align}
    X &= \sum_{i=1}^N \frac{\lambda_1^i}{2} \left\| x^{*,i}_t - x^{R,i}_{t-1} \right\|_2^2 -  \left(\frac{\mu_i + \lambda_1^i}{2} \right)\left\| x^{*,i}_{t-1} - x^{R,i}_{t-1} \right\|_2^2\\ 
    &\leq \sum_{i=1}^N \frac{\lambda_1^i}{2} \left\| x^{*,i}_t - x^{*,i}_{t-1} \right\|_2^2 + \lambda_1^i\left\| x^{*,i}_t - x^{*,i}_{t-1} \right\|\cdot \left\|x^{*,i}_{t-1} - x^{R,i}_{t-1} \right\|  -  \frac{\mu_i}{2} \left\| x^{*,i}_{t-1} - x^{R,i}_{t-1} \right\|_2^2 
    \end{align}

    \begin{align}
    &= \sum_{i=1}^N \frac{\lambda_1^i}{2} \left\| x^{*,i}_t - x^{*,i}_{t-1} \right\|_2^2 + \frac{\lambda_1^i\left\| x^{*,i}_t - x^{*,i}_{t-1} \right\|}{\sqrt{\mu_i}}\cdot \sqrt{\mu_i}\left\|x^{*,i}_{t-1} - x^{R,i}_{t-1} \right\|  -  \frac{\mu_i}{2} \left\| x^{*,i}_{t-1} - x^{R,i}_{t-1} \right\|_2^2 \\
    &\leq \sum_{i=1}^N \frac{\lambda_1^i}{2} \left\| x^{*,i}_t - x^{*,i}_{t-1} \right\|_2^2 + \frac{\left(\lambda_1^i\right)^2\left\| x^{*,i}_t - x^{*,i}_{t-1} \right\|_2^2}{2\mu_i}\\
    &= \sum_{i=1}^N \lambda_1^i \left( 1+\frac{\lambda_1^i}{\mu_i} \right) M^{*,i}_t
\end{align}
Therefore, we have that
\begin{align}
    H_t^R + \sum_{i=1}^N \lambda_1^i M^{R,i}_t + \Delta \phi_t &\leq H^*_t + \sum_{i=1}^N \lambda_1^i \left( 1+\frac{\lambda_1^i}{\mu_i} \right) M^{*,i}_t
\end{align}
leading to
\begin{align}
    \left(\min_i \lambda_1^i \right) (H_t^R + m^R) + \Delta \phi_t &\leq H^R_t + \left( \max_i \left\{\lambda_1^i \left( 1+\frac{\lambda_1^i}{\mu_i} \right) \right\} \right) M^{*,i}_t
\end{align}
and finally,
\begin{align}
    H^R_t + M^R_t + \frac{\Delta \phi_t}{\min_i \lambda_1^i} \leq \max \left\{ \frac{1}{\min_i \lambda_1^i}, \max_i \left\{\lambda_1^i \left( 1+\frac{\lambda_1^i}{\mu_i} \right) \right\} \right\} \left( H^*_t + M^*_t \right).
\end{align}
Therefore, the general competitive ratio is
\begin{align}
    CR = \max \left\{ \frac{1}{\min_i \lambda_1^i}, \max_i \left\{\lambda_1^i \left( 1+\frac{\lambda_1^i}{\mu_i} \right) \right\} \right\}
\end{align}
Now, we find the optimal values of $\left\{\lambda_1^i \right\}_{i=1}^N$. First observe that
\begin{align}
    \max \left\{ \frac{1}{\min_i \lambda_1^i}, \max_i \left\{\lambda_1^i \left( 1+\frac{\lambda_1^i}{\mu_i} \right) \right\} \right\} \geq \max \left\{ \frac{1}{\lambda_1^i}, \lambda_1^i \left( 1+\frac{\lambda_1^i}{\mu_i} \right) \right\} \text{ } \forall \text{ } i \in \{1,\ldots, N\}
\end{align}
and
\begin{align}
    \max \left\{ \frac{1}{\lambda_1^i}, \lambda_1^i \left( 1+\frac{\lambda_1^i}{\mu_i} \right) \right\} \geq 1+\frac{1}{2}\left(\sqrt{1+\frac{4}{\mu_i}}-1\right).
\end{align}
This means 
\begin{align}
    CR \geq 1+\frac{1}{2}\left(\sqrt{1+\frac{4}{\min_i \mu_i}}-1\right).
\end{align}
Keeping this lower bound in mind, let's consider the values of $\lambda_1^i = \frac{2}{1 + \sqrt{1 + \frac{4}{\mu_i}}} = \frac{\mu_i}{2}\left(\sqrt{1 + \frac{4}{\mu_i}}-1\right)$. Observe that $$\lambda_1^i \left( 1+\frac{\lambda_1^i}{\mu_i} \right) = \frac{\mu_i}{2}\left(\sqrt{1 + \frac{4}{\mu_i}}-1\right) \frac{1}{2}\left(\sqrt{1+\frac{4}{\mu_i}}+1 \right) = 1 \text{ } \forall \text{ } i\in \{1,\ldots,N\}.$$
This means the competitive ratio is
\begin{align}
    CR = \frac{1}{\min_i \lambda_1^i} = \frac{1}{\frac{2}{1 + \sqrt{1 + \frac{4}{\min_i \mu_i}}}} = \frac{1}{2} + \frac{1}{2} \sqrt{1 + \frac{4}{\min_i \mu_i}}
\end{align}
which proves the best competitive ratio to be $CR = \frac{1}{2} + \frac{1}{2} \sqrt{1 + \frac{4}{\min_i \mu_i}}$.
\end{proof}

\section{Performance of \acordcaps{}}\label{appendix_sec: decental_robd_l2}
\subsection{Proof of Theorem \ref{main_thm:no_bias_CR}}\label{appendix_ssec: 3.1 proof}
The online action in Theorem \ref{main_thm: decoup_robd} is
\begin{align}
    \mathbf{x}^R_t &= \argmin_{\mathbf{x} = (x^1,\ldots,x^N)} \sum_{i=1}^N F_t^i(x^i) + \frac{\beta}{2}\|x\|_{G_t}^2\\
    &= \argmin_{\mathbf{x} = (x^1,\ldots,x^N)} \sum_{i=1}^N \left\{f_t^i(x^i) + \frac{\lambda_1^i}{2}\|x^i - x^i_{t-1}\|_2^2 \right\} + \frac{\beta}{2}\sum_{(i,j) \in \mathcal{E}_t} \|x^i - x^j\|_{(A^{(i,j)}_t)^T A^{(i,j)}_t}^2 \label{eqn:d-robd_onj}
\end{align}
but the presence of $G_t$ makes the sub-problem still coupled. We decouple this completely replacing the dissimilarity cost with one allows for decentralization, albeit creating extra variables,
\begin{align}
    x_t^i = \argmin_{x \in \R^d} f_t^i(x) + \frac{\lambda_1^i}{2}\|x - x^i_{t-1}\|_2^2 + \beta \sum_{\mathcal{E}_t \ni e \ni i} \|x - z_e\|_{(A^e_t)^T A^e_t}^2 \text{ } \forall \text{ } i \in \{1,\ldots,N\}
\end{align}
where $z_e$ is shared between the sub-problems for both $x_i^t$ and $x_j^t$ is $e = (i,j)$. If we combine all these into one,
\begin{align}\label{eqn:decentralized-robd-obj_appendix}
    \mathbf{x}_t,\mathbf{z} = \argmin_{\substack{\mathbf{x} = \left(x^1,\ldots,x^N \right)\\ \mathbf{z} = (z_e)_{e\in \mathcal{E}_t}}} \sum_{i=1}^N \left\{ f_t^i(x^i) + \frac{\lambda_1^i}{2}\|x^i - x^i_{t-1}\|_2^2 + \beta \sum_{\mathcal{E}_t \ni e \ni i} \|x^i - z_e\|_{(A^e_t)^T A^e_t}^2 \right\}
\end{align}
Now observe what the solution to this optimization sub-problem should satisfy
\begin{gather}
    \nabla_{x^i} f_t^i(x^i) + \lambda_1^i (x^i - x^i_{t-1}) + 2\beta \sum_{\mathcal{E}_t \ni e \ni i} (A^e_t)^T A^e_t (x^i - z_e) \text{ } \forall \text{ } i \in \{1,\ldots,N\}\\
    z_e = \frac{x^i + x^j}{2} \text{ where } e = (i,j)
\end{gather}
and putting these two together,
\begin{align}
    \nabla_{x^i} f_t^i(x^i) + \lambda_1^i (x^i - x^i_{t-1}) + 2\beta &\sum_{j \text{ s.t } (i,j) \in \mathcal{E}_t} (A^e_t)^T A^e_t\left(x^i - \frac{x^i + x^j}{2} \right) = 0 \text{ } \forall \text{ } i \in \{1,\ldots,N\}\label{eqn3}\\
    \implies \nabla_{x^i} f_t^i(x^i) + \lambda_1^i (x^i - x^i_{t-1}) + \beta &\sum_{j \text{ s.t } (i,j) \in \mathcal{E}_t} (A^e_t)^T A^e_t\left(x^i - x^j \right) = 0 \text{ } \forall \text{ } i \in \{1,\ldots,N\}.\label{eqn4}
\end{align}
Now, observe the optimality condition for the objective in \eqref{eqn:d-robd_onj}, 
\begin{align}\label{eqn5}
    \nabla_{x^i} f_t^i(x^i) + \lambda_1^i(x^i - x^i_{t-1}) + \beta\sum_{j \text{ s.t } (i,j) \in \mathcal{E}_t} (A^{(i,j)}_t)^T A^{(i,j)}_t (x^i - x^j) = 0
\end{align}
This proves that our method of decoupling the dissimilarity cost retains the optimality of condition for the \textit{still coupled} objective of Theorem \ref{main_thm: decoup_robd}.

\[
\pushQED{\qed} 
\qedhere
\popQED
\]  

\subsection{Proof of Proposition \ref{prop: alt_min_covergence}}

\begin{lemma}\label{lemma: strong-convexity}
    $\mathbbm{F}_t(\mathbf{x},\mathbf{z})$ is strongly convex in both $\mathbf{x}$ and $\mathbf{z}$. 
\end{lemma}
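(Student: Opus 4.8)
The plan is to prove the two curvature statements separately, in each case peeling off the part of $\mathbbm{F}_t$ that supplies the modulus of strong convexity and leaving a remainder that is merely convex. A preliminary bookkeeping step is to collapse the double sum $\sum_{i=1}^N\sum_{\mathcal{E}_t\ni e\ni i}$ into one sum over edges, so that for each $e=(i,j)\in\mathcal{E}_t$ both $\beta\|A^e_t x^i - A^e_t z_e\|_2^2$ (contributed by $i$) and $\beta\|A^e_t x^j - A^e_t z_e\|_2^2$ (contributed by $j$) are accounted for.

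For strong convexity in $\mathbf{x}$ with $\mathbf{z}$ fixed, I would note that $\mathbbm{F}_t(\cdot,\mathbf{z})$ is separable across the blocks $x^1,\dots,x^N$: the block-$i$ summand is $g^i(x^i):=f^i_t(x^i)+\tfrac{\lambda_1^i}{2}\|x^i-x^i_{t-1}\|_2^2+\beta\sum_{\mathcal{E}_t\ni e\ni i}\|A^e_t x^i - A^e_t z_e\|_2^2$. Since $f^i_t$ is $\mu_i$-strongly convex, adding the quadratic $\tfrac{\lambda_1^i}{2}\|\cdot\|_2^2$ raises the modulus to $\mu_i+\lambda_1^i$ — I would argue this from the subgradient form of strong convexity rather than from Hessians, since $f^i_t$ may be non-smooth — and the remaining squared-norm terms are convex in $x^i$ (indeed, using that $A^e_t$ is full rank so $(A^e_t)^\top A^e_t\succeq m I_d$, they add a further $2\beta m\,\mathcal{D}_i$ to the modulus, with $\mathcal{D}_i$ the degree of $i$). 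Hence each $g^i$ is strongly convex, and a separable sum of block-wise strongly convex functions is strongly convex; this settles the $\mathbf{x}$-direction, with the precise modulus recorded for later use in Proposition~\ref{prop: alt_min_covergence}.

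For strong convexity in $\mathbf{z}$ with $\mathbf{x}$ fixed, the key observation is that $z_e$ occurs only in the two terms attached to edge $e$, so the $\mathbf{z}$-dependent part of $\mathbbm{F}_t$ is separable over edges: it equals $\sum_{e=(i,j)\in\mathcal{E}_t}\psi_e(z_e)$ with $\psi_e(z_e)=\beta\|A^e_t x^i - A^e_t z_e\|_2^2+\beta\|A^e_t x^j - A^e_t z_e\|_2^2$, plus a $\mathbf{z}$-independent remainder. Each $\psi_e$ is a quadratic in $z_e$ with Hessian $4\beta(A^e_t)^\top A^e_t$, and the lower singular-value bound $\sqrt m$ on $A^e_t$ gives $(A^e_t)^\top A^e_t\succeq m I_d$, hence $\nabla^2\psi_e\succeq 4\beta m I_d$. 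The Hessian of $\mathbbm{F}_t$ in $\mathbf{z}$ is therefore block-diagonal with every block at least $4\beta m I_d$, so $\mathbbm{F}_t$ is $4\beta m$-strongly convex in $\mathbf{z}$.

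The argument is a routine curvature computation, so the only points that need attention are: (i) correctly collapsing the over-counted edge sum, so that each $z_e$ is seen to pick up curvature $4\beta(A^e_t)^\top A^e_t$ rather than $2\beta(A^e_t)^\top A^e_t$; (ii) handling the possibly non-smooth $f^i_t$ via the definition of strong convexity in the $\mathbf{x}$-block instead of second derivatives; and (iii) noting that the $\mathbf{z}$-statement genuinely uses $\beta>0$, in line with the standing hypothesis. The lower bound $4\beta m$ found here is the natural companion of the upper bound $4\beta l$ in Lemma~\ref{main_lemma: smoothness}, both coming from the singular-value window $[\sqrt m,\sqrt l]$ of $A^e_t$.
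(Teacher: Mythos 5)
There is a genuine gap: you prove strong convexity of $\mathbbm{F}_t$ \emph{in each block separately} (in $\mathbf{x}$ with $\mathbf{z}$ frozen, and in $\mathbf{z}$ with $\mathbf{x}$ frozen), but the lemma, as the paper uses it, asserts \emph{joint} strong convexity on $\R^{(N+|\mathcal{E}_t|)d}$. In the proof of Proposition~\ref{prop: alt_min_covergence} the parameter $\sigma$ is inserted into the inequality $\mathbbm{F}_t(\mathbf{w}) \geq \mathbbm{F}_t(\mathbf{w}_{k+\frac{1}{2}}) + \nabla \mathbbm{F}_t(\mathbf{w}_{k+\frac{1}{2}})^{T}(\mathbf{w}-\mathbf{w}_{k+\frac{1}{2}}) + \frac{\sigma}{2}\|\mathbf{w}-\mathbf{w}_{k+\frac{1}{2}}\|_2^2$ over the \emph{full} variable $\mathbf{w}=(\mathbf{x},\mathbf{z})$, and in Appendix~\ref{appendix_sec: graph_dependence} $\sigma_t$ is explicitly the smallest eigenvalue of the full block Hessian. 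Blockwise strong convexity does not imply joint strong convexity: the function $(x,z)\mapsto (x-z)^2$ has modulus $2$ in $x$ for fixed $z$ and modulus $2$ in $z$ for fixed $x$, yet is flat along $x=z$. This is exactly the danger here, because each coupling term $\beta\|A^e_t x^i - A^e_t z_e\|_2^2$ is degenerate along $x^i=z_e$; your marginal moduli $\mu_i+\lambda_1^i+2\beta m\mathcal{D}_i$ and $4\beta m$ say nothing about the curvature along such mixed directions.

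The paper's proof works with the joint Hessian, lower-bounding it by $A+2\beta m B$ where $B$ is the (only positive \emph{semi}-definite) graph coupling matrix and $A$ carries the $(\mu_i+\lambda_1^i)$ blocks, and then argues by cases on the eigenvector: directions supported purely on the $\mathbf{z}$-coordinates pick up $4\beta m$ from $B$, while any direction with a nonzero $\mathbf{x}$-component picks up curvature from $A$. The essential point your argument misses is that the degenerate directions of the coupling quadratic are rescued by the hitting and switching costs acting on $\mathbf{x}$ — this interplay between the two blocks is the whole content of the lemma, and it also explains why the resulting joint $\sigma_t$ is generally much smaller than either of your marginal moduli (e.g.\ $\sigma_t \leq \frac{2(\mu+\lambda_1)}{\mathcal{D}+\kappa}$ for regular graphs, not $4\beta m$). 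Your marginal computations are correct and the $\mathbf{x}$-block modulus is genuinely needed elsewhere, but to close the gap you must bound the smallest eigenvalue of the full two-by-two block Hessian, cross terms included.
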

\begin{proof}
In this result, we only prove a strictly positive strong-convexity parameter. For the exact dependence of this parameter on the graph $\mathcal{G}_t$, please refer to Appendix \ref{appendix_sec: graph_dependence}. Now, continuing with the aforementioned lemma's proof. Consider the Hessian of $\sum_{i=1}^N \sum_{\mathcal{E}_t \ni e \ni i}\|A_t^{e} x^i - A_t^{e}z_e\|_{2}^2$ which can be written as
\begin{align}
    2\begin{bmatrix}
        \Tilde{\mathbf{D}}^G_{Nd \times Nd} & \Tilde{\mathbf{G}}_z\\
        \Tilde{\mathbf{G}}_z^T & 2 \Tilde{\mathbf{D}}^Z_{|\mathcal{E}_t|d}
    \end{bmatrix}
\end{align}
The matrix $\Tilde{\mathbf{G}}_z$ is $-\left(A^{q}_t \right)^T A^{q}_t$ at $(i,q)^{th}$ block if the $q^{th}$ edge has one end as agent $i$. All other $d \times d$ blocks of $\Tilde{\mathbf{G}}_z$ are zero. The matrix $\Tilde{\mathbf{D}}^G_{Nd \times Nd}$ is a block diagonal matrix with the $i^{th}$ block being $\sum_{\mathcal{E}_t \ni e \ni i} (A^e_t)^T A^e_t$. Lastly, $\Tilde{\mathbf{D}}^Z_{|\mathcal{E}_t|d}$ is also a diagonal matrix where the $q^{th}$ diagonal block is $\left(A^{q}_t \right)^T A^{q}_t$. 

Now, for any $\begin{bmatrix}
    \mathbf{x}\\ \mathbf{z}
\end{bmatrix}$, the following holds as $(A^e_t)^T A^e_t \succ mI$ $\forall$ $e \in \mathcal{E}_t$,
\begin{align}
    \begin{bmatrix}
    \mathbf{x} & \mathbf{z}
\end{bmatrix}\left(2\begin{bmatrix}
        \Tilde{\mathbf{D}}^G_{Nd \times Nd} & \Tilde{\mathbf{G}}_z\\
        \Tilde{\mathbf{G}}_z^T & 2 \Tilde{\mathbf{D}}^Z_{|\mathcal{E}_t|d}
    \end{bmatrix}\right) \begin{bmatrix}
    \mathbf{x}\\ \mathbf{z}
\end{bmatrix} &= 2\sum_{i=1}^N \sum_{\mathcal{E}_t \ni e \ni i}\|A_t^{e} x^i - A_t^{e}z_e\|_{2}^2\\
&\geq 2m \sum_{i=1}^N \sum_{\mathcal{E}_t \ni e \ni i}\|x^i - z_e\|_{2}^2\\
&= \begin{bmatrix}
    \mathbf{x} & \mathbf{z}
\end{bmatrix}\left(2m\begin{bmatrix}
        \mathbf{D}^G_{Nd \times Nd} & \mathbf{G}_z\\
        \mathbf{G}_z^T & 2\mathbf{I}_{|\mathcal{E}_t|d}
    \end{bmatrix}\right) \begin{bmatrix}
    \mathbf{x}\\ \mathbf{z}
\end{bmatrix}
\end{align}
The matrix $\mathbf{G}_z$ is $-\mathbf{I}_d$ at $(i,q)^{th}$ block if the $q^{th}$ edge has one end as agent $i$. All other $d \times d$ blocks of $\mathbf{G}_z$ are zero. The matrix $\mathbf{D}^G_{Nd \times Nd}$ is again a block diagonal matrix with the $i^{th}$ block being $d_i \mathbf{I}_d$ where $d_i$ is the $i^{th}$ agent's degree. This means
\begin{align}
    \begin{bmatrix}
        \Tilde{\mathbf{D}}^G_{Nd \times Nd} & \Tilde{\mathbf{G}}_z\\
        \Tilde{\mathbf{G}}_z^T & 2 \Tilde{\mathbf{D}}^Z_{|\mathcal{E}_t|d}
    \end{bmatrix} \succeq  2m\begin{bmatrix}
        \mathbf{D}^G_{Nd \times Nd} & \mathbf{G}_z\\
        \mathbf{G}_z^T & 2\mathbf{I}_{|\mathcal{E}_t|d}
    \end{bmatrix}
\end{align}
which is tight as the above holds for any $\{A^e_t\}_{e \in \mathcal{E}_t}$.
Now, consider the Hessian of $\mathbbm{F}_t(\mathbf{x},\mathbf{z})$. It will be tightly lower bounded as
\begin{align}\label{eqn: hessian}
    \nabla^2 \mathbbm{F}_t(\mathbf{x},\mathbf{z}) \succeq 
    \underbrace{\begin{bmatrix}
        \mathbf{D}^{f}_{Nd\times Nd} & \mathbf{0}_{Nd \times |\mathcal{E}_t|d}\\
        \mathbf{0}_{|\mathcal{E}_t|d \times Nd} & \mathbf{0}_{|\mathcal{E}_t|d \times |\mathcal{E}_t|d}
    \end{bmatrix}}_A + 2\beta m
    \underbrace{\begin{bmatrix}
        \mathbf{D}^G_{Nd \times Nd} & \mathbf{G}_z\\
        \mathbf{G}_z^T & 2\mathbf{I}_{|\mathcal{E}_t|d}
    \end{bmatrix}}_B
\end{align}
where $\mathbf{D}^f_{Nd \times Nd}$ is the diagonal block matrix where the $i^{th}$ $d \times d$ block is $(\mu_i + \lambda_1^i)\mathbf{I}_d$. Note that the matrix $B$ is positive semi-definite as $ \sum_{i=1}^N \sum_{\mathcal{E}_t \ni e \ni (i,j)} \|x^i - z_e\|_2^2 \geq 0$ for any $(x,z) \in \R^{(N+|\mathcal{E}_t|)d}$.

Now, we need to find the smallest eigenvalue of $(A+2\beta mB)$ to get the strong-convexity parameter. Suppose the associated eigenvector is $v \in \R^{(N+|\mathcal{E}_t|)d}$. We need to express this in some orthonormal basis for $\R^{(N+|\mathcal{E}_t|)d}$. Let's take the eigenbasis of the first matrix in \eqref{eqn: hessian}. One can easily see that the basis $\{y_i\}_{i=1}^{(N+|\mathcal{E}_t|)d}$ is of the form
\begin{align}
    \underbrace{\begin{bmatrix}
        u_1\\
        \mathbf{0}_{|\mathcal{E}_t|d}
    \end{bmatrix}, \ldots, 
    \begin{bmatrix}
        u_{Nd}\\
        \mathbf{0}_{|\mathcal{E}_t|d}
    \end{bmatrix}}_{S_1 = \{y_i\}_{i=1}^{Nd}}, 
    \underbrace{\begin{bmatrix}
        \mathbf{0}_{Nd}\\
        \mathbf{e}_{1}
    \end{bmatrix}, \ldots,
    \begin{bmatrix}
        \mathbf{0}_{Nd}\\
        \mathbf{e}_{|\mathcal{E}_t|d}
    \end{bmatrix}}_{S_2 = \{y_i\}_{i=Nd+1}^{(N+|\mathcal{E}_t|)d}}
\end{align}
where $\{u_i\}_{i=1}^{Nd}$ is the eigenbasis of $\mathbf{D}^{f}_{Nd\times Nd}$. Suppose $v = \sum_{i=1}^{(N+|\mathcal{E}_t|)d} c_i y_i$. Then the strong convexity parameter of $\mathbbm{F}_t(x,z)$ satisfies
\begin{align}
        \sigma &= v^T (A+2\beta mB) v\\
        &= v^TA v  + 2\beta m v^T B v\\
        &= \sum_{i=1}^{Nd} c_i^2 \mu_{1+\left \lfloor \frac{i-1}{N} \right \rfloor}    + 2\beta m v^T B v
    \end{align}

Now, there are two cases:
\begin{enumerate}
    \item $v \in Span(S_2)$ : This implies the first part of the sum above is zero and $v = \begin{bmatrix}
        \mathbf{0}_{Nd} \\ \mathbf{c} 
    \end{bmatrix}$. Therefore,
    \begin{align}
       \sigma = 2\beta v^TB v &= 2\beta m\left( \sum_{i=Nd+1}^{(N+|\mathcal{E}_t|)d} c_i y_i \right)^T B v\\
       &= 2\beta m\begin{bmatrix}
           \mathbf{c}^T G_z^T & 2\mathbf{c}^T 
       \end{bmatrix}\begin{bmatrix}
        \mathbf{0}_{Nd} \\ \mathbf{c} 
    \end{bmatrix}\\
    &= 4\beta m\| \mathbf{c}\|_2^2\\
    &= 4\beta m> 0
    \end{align}
    \item $v \not \in Span(S_2)$: This means there is $i \in \{1,\ldots,Nd\}$ such that $c_i > 0$. Since $B$ is positive semi-definite, $v^T B b \geq 0$ and,
    \begin{align}
        \sigma \geq c_i^2 \mu_{1+\left \lfloor \frac{i-1}{N} \right \rfloor} > 0
    \end{align}
\end{enumerate}
proving that $\mathbbm{F}_t(x,z)$ is strongly convex is both $x$ and $z$. Also note that $\sigma$ is at most $4\beta m$. The argument for this is similar to what we just saw,
\begin{align}
    \sigma &= v^T (A+2\beta mB)v\\
    &\leq \left(\begin{bmatrix}
        \mathbf{0}_{Nd} \\ \mathbf{c} 
    \end{bmatrix} \right)^T (A + 2\beta mB) \begin{bmatrix}
        \mathbf{0}_{Nd} \\ \mathbf{c} 
    \end{bmatrix}\\
    &= 2\beta m\cdot 2 \|\mathbf{c}\|_2^2\\
    &= 4\beta m
\end{align}
where $\mathbf{c} \in \R^{|\mathcal{E}_t|d}$.
\end{proof}

\begin{lemma}\label{lemma: smoothness}
$\mathbbm{F}_t(\mathbf{x},\mathbf{z})$ is $4\beta l$ smooth in $\mathbf{z}$.    
\end{lemma}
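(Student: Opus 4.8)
The plan is to fix $\mathbf{x}$ and compute the Hessian of $\mathbf{z} \mapsto \mathbbm{F}_t(\mathbf{x},\mathbf{z})$ explicitly, then bound its largest eigenvalue by $4\beta l$. First I would isolate the part of $\mathbbm{F}_t$ that depends on $\mathbf{z}$: the hitting-cost terms $f_t^i(x^i)$ and the switching terms $\tfrac{\lambda_1^i}{2}\|x^i - x_{t-1}^i\|_2^2$ are constant in $\mathbf{z}$, so only $\beta \sum_{i=1}^N \sum_{\mathcal{E}_t \ni e \ni i}\|A_t^e x^i - A_t^e z_e\|_2^2$ survives. Re-indexing this double sum by edges, each edge $e=(i,j)\in\mathcal{E}_t$ contributes exactly twice — once from the inner sum of endpoint $i$ and once from that of endpoint $j$ — giving the edge-local term $\beta\|A_t^e x^i - A_t^e z_e\|_2^2 + \beta\|A_t^e x^j - A_t^e z_e\|_2^2$, which is the only place $z_e$ appears.

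Next I would observe that the variables $\{z_e\}_{e\in\mathcal{E}_t}$ do not interact with one another, so $\nabla^2_{\mathbf{z}}\mathbbm{F}_t(\mathbf{x},\mathbf{z})$ is block diagonal with one $d\times d$ block per edge. For edge $e=(i,j)$, differentiating the edge-local term twice in $z_e$ gives
\begin{align}
\nabla^2_{z_e}\mathbbm{F}_t(\mathbf{x},\mathbf{z}) = 2\beta (A_t^e)^T A_t^e + 2\beta (A_t^e)^T A_t^e = 4\beta (A_t^e)^T A_t^e.
\end{align}
Since $A_t^e$ has singular values in $[\sqrt{m},\sqrt{l}]$, we have $(A_t^e)^T A_t^e \preceq l\, \mathbf{I}_d$, hence each block satisfies $\nabla^2_{z_e}\mathbbm{F}_t \preceq 4\beta l\,\mathbf{I}_d$. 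Assembling the block-diagonal Hessian, $\nabla^2_{\mathbf{z}}\mathbbm{F}_t(\mathbf{x},\mathbf{z}) \preceq 4\beta l\,\mathbf{I}_{|\mathcal{E}_t|d}$ uniformly in $\mathbf{x}$ and $\mathbf{z}$, which is exactly the statement that $\mathbbm{F}_t$ is $4\beta l$-smooth in $\mathbf{z}$.

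There is no genuine obstacle here; the only point requiring care is the bookkeeping that yields the constant $4$ rather than $2$ — namely that every edge's auxiliary variable $z_e$ enters the objective through \emph{both} of its endpoints' penalty sums, doubling the naive single-penalty Hessian $2\beta(A_t^e)^TA_t^e$. (This also matches the block $2\Tilde{\mathbf{D}}^Z_{|\mathcal{E}_t|d}$ appearing in the Hessian computation in the proof of Lemma \ref{lemma: strong-convexity}, where the diagonal blocks $2(A^q_t)^TA^q_t$ are exactly the $\mathbf{z}$-curvature contributions before scaling by $\beta$.) One may optionally note the bound is tight: taking all $A_t^e$ with a singular value equal to $\sqrt{l}$ and choosing the perturbation direction supported on a single $z_e$ along the corresponding singular vector attains $4\beta l$.
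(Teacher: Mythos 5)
Your proof is correct and follows essentially the same route as the paper: both isolate the edge-local quadratic terms in $z_e$ (noting each edge appears once from each endpoint's inner sum, which is the source of the factor $4$ rather than $2$) and bound the resulting curvature $4\beta (A_t^e)^T A_t^e \preceq 4\beta l\,\mathbf{I}_d$ via the singular-value assumption. The paper phrases this as a direct expansion of the second-order remainder $\mathbbm{F}_t(\mathbf{x},\mathbf{z}+\Delta\mathbf{z}) - \nabla_{\mathbf{z}}\mathbbm{F}_t(\mathbf{x},\mathbf{z})^T\Delta\mathbf{z}$ while you read the same bound off the block-diagonal Hessian, but the content is identical.
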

\begin{proof}
\begin{align}
\begin{split}
    \mathbbm{F}_t&(\mathbf{x},\mathbf{z}+\Delta \mathbf{z}) - \nabla \mathbbm{F}_t(\mathbf{x},\mathbf{z})^T ((\mathbf{x},\mathbf{z}+\Delta \mathbf{z})-(\mathbf{x},\mathbf{z})) = \mathbbm{F}_t(\mathbf{x},\mathbf{z}+\Delta \mathbf{z}) - \nabla \mathbbm{F}_t(\mathbf{x},\mathbf{z})^T ((0,\Delta \mathbf{z}))\\
    =& \mathbbm{F}_t(\mathbf{x},\mathbf{z}+\Delta \mathbf{z}) - \nabla_\mathbf{z} \mathbbm{F}_t(\mathbf{x},\mathbf{z}) ^T\Delta \mathbf{z}
\end{split}\\
\begin{split}
    =& \sum_{i=1}^N \left\{ f_t^i(x^i) + \frac{\lambda_1^i}{2}\|x^i - x^i_{t-1}\|_2^2 \right\} + \beta \sum_{e (=(i,j)) \in \mathcal{E}_t} \|z_e + \Delta z_e - x^i \|_{(A^e_t)^T A^e_t}^2 + \|z_e + \Delta z_e - x^j \|_{(A^e_t)^T A^e_t}^2\\
    & - \beta \sum_{e (=(i,j)) \in \mathcal{E}_t} 2(z_e  - x^i)^T (A^e_t)^T A^e_t \Delta z_e + 2(z_e - x^j)^T (A^e_t)^T A^e_t \Delta z_e
\end{split}\\
\leq& \sum_{i=1}^N \left\{ f_t^i(x^i) + \frac{\lambda_1^i}{2}\|x^i - x^i_{t-1}\|_2^2 \right\} + \beta \sum_{e (=(i,j)) \in \mathcal{E}_t} \|z_e - x^i \|_{(A^e_t)^T A^e_t}^2 + \|z_e - x^j \|_{(A^e_t)^T A^e_t}^2 + 2\beta l \sum_{e \in \mathcal{E}_t} \Delta z_e ^2\\
\leq& \mathbbm{F}_t(\mathbf{x},\mathbf{z}) + \frac{4 \beta l}{2} \|\Delta \mathbf{z}\|_2^2
\end{align}
proving $4\beta l$-smoothness in $\mathbf{z}$.
\end{proof}

\begin{proposition}\label{thm: alt_min_conv_l2}
Alternating Minimization applied to $\mathbbm{F}_t(\mathbf{x},\mathbf{z})$ with $\mathbf{x}$ being one block and $\mathbf{z}$ being the other allows for exponentially fast convergence to $\mathbf{\Tilde{x}}_t, \mathbf{z}_* = \argmin_{\mathbf{x},\mathbf{z}} \mathbbm{F}_t(x,z)$,
$$
\mathbbm{F}_t\left(\left(\mathbf{x}_t\right)_{k},\mathbf{z}_{k} \right) - \mathbbm{F}_t\left(\mathbf{\Tilde{x}}_t, \mathbf{z}_*\right) \leq \left(1-\frac{\sigma}{4\beta l} \right)^{k-1} \left(\mathbbm{F}_t\left(\left(\mathbf{x}_t\right)_{0},\mathbf{z}_{0} \right) - \mathbbm{F}_t\left(\mathbf{\Tilde{x}}_t, \mathbf{z}_*\right)\right)
$$
\end{proposition}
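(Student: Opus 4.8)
The plan is to run the standard two-block alternating-minimization convergence argument (``sufficient decrease'' plus ``gradient domination''), arranged so that smoothness is invoked \emph{only} for the $\mathbf{z}$-block, which is what leaves the hitting costs $f^i_t$ free to be non-smooth. Write $\sigma$ for the strong-convexity parameter of $\mathbbm{F}_t$ guaranteed by Lemma~\ref{lemma: strong-convexity}, put $\rho := \sigma/(4\beta l)$ and $\mathbbm{F}_t^{\star} := \mathbbm{F}_t(\tilde{\mathbf{x}}_t,\mathbf{z}_*)$, and fix an iteration index $k\ge 1$. The crucial observation after the $\mathbf{x}$-update is that $(\mathbf{x}_t)_k$ is an \emph{exact} minimizer of $\mathbf{x}\mapsto \mathbbm{F}_t(\mathbf{x},\mathbf{z}_{k-1})$, so $\mathbf{0}\in\partial_{\mathbf{x}}\mathbbm{F}_t\big((\mathbf{x}_t)_k,\mathbf{z}_{k-1}\big)$; consequently the vector $s_k := \big(\mathbf{0},\ \nabla_{\mathbf{z}}\mathbbm{F}_t((\mathbf{x}_t)_k,\mathbf{z}_{k-1})\big)$ lies in the subdifferential of $\mathbbm{F}_t$ at the point $v_k := ((\mathbf{x}_t)_k,\mathbf{z}_{k-1})$, with $\|s_k\| = \big\|\nabla_{\mathbf{z}}\mathbbm{F}_t(v_k)\big\|$.

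I would first record two one-step estimates. (i) \emph{Gradient domination}: since $\mathbbm{F}_t$ is $\sigma$-strongly convex, for every point $v$ and every subgradient $s\in\partial\mathbbm{F}_t(v)$ one has $\mathbbm{F}_t(v)-\mathbbm{F}_t^{\star} \le \tfrac{1}{2\sigma}\|s\|^2$ (write the strong-convexity lower bound at $v$, evaluate it at the minimizer, and minimize the resulting quadratic in the displacement). Applied at $v_k$ with $s=s_k$ this gives $\big\|\nabla_{\mathbf{z}}\mathbbm{F}_t(v_k)\big\|^2 \ge 2\sigma\big(\mathbbm{F}_t(v_k)-\mathbbm{F}_t^{\star}\big)$. (ii) \emph{Sufficient decrease of the $\mathbf{z}$-step}: since $\mathbf{z}\mapsto\mathbbm{F}_t((\mathbf{x}_t)_k,\mathbf{z})$ is $4\beta l$-smooth by Lemma~\ref{lemma: smoothness}, exact minimization over $\mathbf{z}$ decreases the objective by at least as much as one gradient step with stepsize $1/(4\beta l)$, i.e. $\mathbbm{F}_t((\mathbf{x}_t)_k,\mathbf{z}_k) \le \mathbbm{F}_t(v_k) - \tfrac{1}{8\beta l}\big\|\nabla_{\mathbf{z}}\mathbbm{F}_t(v_k)\big\|^2$.

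Combining (i) and (ii) yields the per-iteration contraction $\mathbbm{F}_t((\mathbf{x}_t)_k,\mathbf{z}_k)-\mathbbm{F}_t^{\star} \le (1-\rho)\big(\mathbbm{F}_t(v_k)-\mathbbm{F}_t^{\star}\big)$. I would then invoke monotonicity of the $\mathbf{x}$-step, $\mathbbm{F}_t(v_k)=\mathbbm{F}_t((\mathbf{x}_t)_k,\mathbf{z}_{k-1}) \le \mathbbm{F}_t((\mathbf{x}_t)_{k-1},\mathbf{z}_{k-1})$, and chain: with $\eta_k := \mathbbm{F}_t((\mathbf{x}_t)_k,\mathbf{z}_k)-\mathbbm{F}_t^{\star}$ we get $\eta_k \le (1-\rho)\,\eta_{k-1}$ for $k\ge 2$, while for $k=1$ bounding $\mathbbm{F}_t(v_1)$ by $\mathbbm{F}_t((\mathbf{x}_t)_0,\mathbf{z}_0)$ gives $\eta_1 \le (1-\rho)\big(\mathbbm{F}_t((\mathbf{x}_t)_0,\mathbf{z}_0)-\mathbbm{F}_t^{\star}\big)$. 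Iterating the recursion gives $\eta_k \le (1-\rho)^{k}\big(\mathbbm{F}_t((\mathbf{x}_t)_0,\mathbf{z}_0)-\mathbbm{F}_t^{\star}\big) \le (1-\rho)^{k-1}\big(\mathbbm{F}_t((\mathbf{x}_t)_0,\mathbf{z}_0)-\mathbbm{F}_t^{\star}\big)$, which after substituting $\rho=\sigma/(4\beta l)$ is exactly the claimed inequality; a one-line remark using Lemma~\ref{lemma: strong-convexity} and $m\le l$ records $0<\sigma\le 4\beta m\le 4\beta l$, so $1-\rho\in[0,1)$ and the decay is genuinely geometric.

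The main point requiring care — and essentially the only non-routine one — is keeping the argument free of any differentiation in $\mathbf{x}$: the gradient-domination inequality must be applied with the \emph{specific} subgradient $s_k=(\mathbf{0},\nabla_{\mathbf{z}}\mathbbm{F}_t(v_k))$, legitimate precisely because the $\mathbf{x}$-block is minimized exactly (so $\mathbf{0}\in\partial_{\mathbf{x}}\mathbbm{F}_t(v_k)$), and the sufficient-decrease estimate must be read purely as a statement about the $4\beta l$-smooth block-map $\mathbf{z}\mapsto\mathbbm{F}_t((\mathbf{x}_t)_k,\mathbf{z})$. A secondary point is to justify $\sigma$-strong convexity of $\mathbbm{F}_t$ in the subgradient sense (rather than via the Hessian computation used in Lemma~\ref{lemma: strong-convexity}, which presumes the $f^i_t$ smooth); this is immediate from $\mu_i$-strong convexity of each $f^i_t$ together with the fixed positive curvature supplied by the $\tfrac{\lambda_1^i}{2}\|x^i-x^i_{t-1}\|_2^2$ and dissimilarity terms.
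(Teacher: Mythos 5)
Your proposal is correct and follows essentially the same route as the paper's proof: gradient domination from $\sigma$-strong convexity at the half-step point $((\mathbf{x}_t)_k,\mathbf{z}_{k-1})$, sufficient decrease of the $\mathbf{z}$-step from the $4\beta l$-smoothness in $\mathbf{z}$, vanishing of the $\mathbf{x}$-component of the (sub)gradient after the exact $\mathbf{x}$-minimization, and chaining via monotonicity. Your explicit use of the subgradient $s_k=(\mathbf{0},\nabla_{\mathbf{z}}\mathbbm{F}_t(v_k))$ is in fact a slightly more careful rendering of the paper's argument (which writes $\nabla\mathbbm{F}_t(\mathbf{w}_{k+\frac{1}{2}})$ even though the $f^i_t$ may be non-smooth), and your exponent $(1-\rho)^{k}$ is marginally sharper than the stated $(1-\rho)^{k-1}$.
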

\begin{proof}
The proof is along the lines of the more general version in \cite{BeckTetruashvili13}. We do it specifically for the case where $\mathbbm{F}_t(x,z)$ is smooth only in $z$ and not in $x$. The algorithm goes as follows (where we drop the $t$ subscript for better readability)
\begin{align}
    \mathbf{x}_{k+1} &= \argmin_{\mathbf{x} \in \R^{Nd}} \mathbbm{F}_t(\mathbf{x},\mathbf{z}_{k})\\
    \mathbf{z}_{k+1} &= \argmin_{\mathbf{z} \in \R^{|\mathcal{E}_t|d}} \mathbbm{F}_t(\mathbf{x}_{k+1}, \mathbf{z}).
\end{align}
We define $\mathbf{w}_{k} := (\mathbf{x}_{k},\mathbf{z}_{k})$ and $\mathbf{w}_{k+\frac{1}{2}} := (\mathbf{x}_{k+1},\mathbf{z}_{k})$. It is evident from the algorithm structure that the following holds,
\begin{align}
    \mathbbm{F}_t(\mathbf{w}_0) \geq \mathbbm{F}_t\left(\mathbf{w}_{\frac{1}{2}}\right) \geq \mathbbm{F}_t(\mathbf{w}_1) \geq \mathbbm{F}_t\left( \mathbf{w}_{\frac{3}{2}} \right) \geq \mathbbm{F}_t(\mathbf{w}_2) \geq \ldots\geq \mathbbm{F}_t(\mathbf{w}_*) 
\end{align}
with $\mathbf{w}_* = \left(\mathbf{\Tilde{x}}_t, \mathbf{z}_*\right)$.

From Lemma \eqref{lemma: strong-convexity}, we know that $\mathbbm{F}_t(x,z)$ is strongly convex in both $(x,z)$. Suppose the strong-convexity parameter is $\sigma$ (which we showed in strictly positive). Therefore, we have
\begin{align}
    \mathbbm{F}_t(\mathbf{w}) \geq \mathbbm{F}_t\left(\mathbf{w}_{k+\frac{1}{2}}\right) + \nabla \mathbbm{F}_t\left(\mathbf{w}_{k+\frac{1}{2}}\right)^T \left(\mathbf{w} - \mathbf{w}_{k+\frac{1}{2}}\right) + \frac{\sigma}{2}\left\| \mathbf{w} - \left(\mathbf{w}_{k+\frac{1}{2}}\right) \right\|_2^2
\end{align}
Now, minimum of LHS is larger than the minimum of RHS (a quadratic in $\mathbf{w}$), giving the following
\begin{align}\label{eqn6}
    \mathbbm{F}_t(\mathbf{w}_*) \geq \mathbbm{F}_t\left(\mathbf{w}_{k+\frac{1}{2}}\right) - \frac{1}{2\sigma} \left\|\nabla \mathbbm{F}_t\left(\mathbf{w}_{k+\frac{1}{2}}\right) \right\|_2^2.
\end{align}
Further, we have that
\begin{align}
    \mathbbm{F}_t\left(\mathbf{w}_{k+\frac{1}{2}}\right) - \mathbbm{F}_t(\mathbf{w}_{k+1}) &= \mathbbm{F}_t(\mathbf{x}_{k+1},\mathbf{z}_k) - \mathbbm{F}_t(\mathbf{x}_{k+1},\mathbf{z}_{k+1})\\
    &\geq \mathbbm{F}_t(\mathbf{x}_{k+1},\mathbf{z}_k) - \mathbbm{F}_t\left(\mathbf{x}_{k+1},\mathbf{z}_k - \frac{\nabla_{\mathbf{z}}\mathbbm{F}_t(\mathbf{x}_{k+1},\mathbf{z}_k)}{4\beta l} \right) \label{eqn11}\\
    &\geq \frac{\|\nabla_{\mathbf{z}}\mathbbm{F}_t(\mathbf{x}_{k+1},\mathbf{z}_k)\|_2^2}{2\cdot 4\beta l}
\end{align}
where the first inequality is a consequence of $\mathbf{z}_{k+1} = \argmin_\mathbf{z} \mathbbm{F}_t(\mathbf{x}_{k+1},\mathbf{z})$ and second inequality is from $2\beta$ smoothness of $\mathbbm{F}_t(\mathbf{x},\mathbf{z})$ in $\mathbf{z}$. Now, observe that $\nabla_{\mathbf{x}} \mathbbm{F}_t(\mathbf{x}_{k+1},\mathbf{z}_k) = \mathbf{0}$ as $\mathbf{x}_{k+1} = \argmin_\mathbf{x} \mathbbm{F}_t(\mathbf{x},\mathbf{z}_k)$. Therefore,
\begin{align}
    \mathbbm{F}_t\left(\mathbf{w}_{k+\frac{1}{2}}\right) - \mathbbm{F}_t(\mathbf{w}_{k+1}) \geq \frac{\left\|\nabla \mathbbm{F}_t\left(\mathbf{w}_{k+\frac{1}{2}}\right) \right\|_2^2}{2\cdot 4\beta l}
\end{align} and we already know that $\mathbbm{F}_t(\mathbf{w}_{k+1}) \geq \mathbbm{F}_t\left(\mathbf{w}_{k+\frac{3}{2}}\right)$, giving us
\begin{align}\label{eqn7}
    \mathbbm{F}_t\left(\mathbf{w}_{k+\frac{1}{2}}\right) - \mathbbm{F}_t\left(\mathbf{w}_{k+\frac{3}{2}}\right) \geq \frac{\left\|\nabla \mathbbm{F}_t\left(\mathbf{w}_{k+\frac{1}{2}}\right) \right\|_2^2}{2\cdot 4\beta l}.
\end{align}
Now we combine \eqref{eqn6} and \eqref{eqn7} to give
\begin{align}
    \mathbbm{F}_t\left(\mathbf{w}_{k+\frac{1}{2}}\right) - \mathbbm{F}_t(\mathbf{w}_*) &\leq \frac{1}{2\sigma} \left\|\nabla \mathbbm{F}_t\left(\mathbf{w}_{k+\frac{1}{2}}\right) \right\|_2^2\\
    &\leq \frac{4\beta l}{\sigma} \left( \mathbbm{F}_t\left(\mathbf{w}_{k+\frac{1}{2}}\right) - \mathbbm{F}_t\left(\mathbf{w}_{k+\frac{3}{2}}\right) \right)\\
    &= \frac{4\beta l}{\sigma} \left( \left(\mathbbm{F}_t\left(\mathbf{w}_{k+\frac{1}{2}}\right) - \mathbbm{F}_t(\mathbf{w}_*)\right) - \left(\mathbbm{F}_t\left(\mathbf{w}_{k+\frac{3}{2}}\right) - \mathbbm{F}_t(\mathbf{w}_*)\right) \right)
\end{align}
which gives
\begin{align}
    \left(\mathbbm{F}_t\left(\mathbf{w}_{k+\frac{3}{2}}\right) - \mathbbm{F}_t(\mathbf{w}_*)\right) \leq \left(1-\frac{\sigma}{4\beta l} \right) \left(\mathbbm{F}_t\left(\mathbf{w}_{k+\frac{1}{2}}\right) - \mathbbm{F}_t(\mathbf{w}_*)\right).
\end{align}
Therefore, for $k \geq 1$,
\begin{align}
    \mathbbm{F}_t(\mathbf{w}_k) - \mathbbm{F}_t(\mathbf{w}_*) &\leq  \mathbbm{F}_t\left(\mathbf{w}_{k-\frac{1}{2}}\right) - \mathbbm{F}_t(\mathbf{w}_*)\\
    &\leq \left(1-\frac{\sigma}{4\beta l} \right)^{k-1} \left(\mathbbm{F}_t\left(\mathbf{w}_{\frac{1}{2}}\right) - \mathbbm{F}_t(\mathbf{w}_*)\right)\\
    &\leq \left(1-\frac{\sigma}{4\beta l} \right)^{k-1} \left(\mathbbm{F}_t(\mathbf{w}_0) - \mathbbm{F}_t(\mathbf{w}_*)\right) \label{eqn: exponential_convergence}
\end{align}
\end{proof}

\section{Approximation Framework for SOCO}\label{appendix_sec: approx_robd}
\subsection{Proof of Theorem \ref{main_thm: approx_robd_cr}}

$H_t$ is \alg{}'s hitting cost, $M_t$ is \alg{}'s switching cost, $H^*_t$ is \opt{}'s hitting cost and $M^*_t$ is \opt{}'s switching cost at round $t$. Now, we know from the proof of \robd{}'s competitive ratio (Theorem 8 \cite{GoelLinWierman19}) that
\begin{align}
    \Tilde{H}_t + \lambda_1 \Tilde{M}_t + (\phi(\Tilde{x}_t,x^*_t) - \phi(x_{t-1},x^*_{t-1})) &\leq H^*_t + \lambda_1\left(1 + \frac{\lambda_1}{\mu } \right) M^*_t
\end{align}
where $\Tilde{H}_t + \Tilde{M}_t = \min_{x \in \R^d} f_t(x) + \frac{\lambda_1}{2}\|x - x_{t-1}\|_2^2$. The approximate solution is such that $(H_t + \lambda_1 m) - (\Tilde{H}_t + \lambda_1 \Tilde{M}_t) \leq \epsilon_1$, meaning
\begin{align}
    H_t + \lambda_1 M_t + (\phi(\Tilde{x}_t,x^*_t) - \phi(x_{t-1},x^*_{t-1})) &\leq H^*_t + \lambda_1\left(1 + \frac{\lambda_1}{\mu} \right) M^*_t + \epsilon_1
\end{align}
and summing this up we have the LHS as
\begin{align}
    H_t + \lambda_1 M_t  + \sum_{t=1}^T \phi(\Tilde{x}_t,x^*_t) - \phi(x_t,x^*_t).
\end{align}
Let's look at the second summation more closely,
\begin{align}
    \phi(\Tilde{x}_t,x^*_t) - \phi(x_t,x^*_t) &= \left(\frac{\mu+\lambda_1}{2}\right)(\|\Tilde{x}_t - x^*_t\|_2^2 - \|x_t - x^*_t\|_2^2)\\
    &= \left(\frac{\mu+\lambda_1 }{2}\right) (\Tilde{x}_t - x_t)^T(\Tilde{x}_t + x_t - 2x^*_t)\\
    &= \left(\mu+\lambda_1 \right) (\Tilde{x}_t - x_t)^T(x_t - x^*_t) + \left(\frac{\mu+\lambda_1 }{2}\right) \|\Tilde{x}_t - x_t\|_2^2\\
    &\geq -\left(\mu+\lambda_1 \right) \|\Tilde{x}_t - x_t\|_2\cdot\|x_t - x^*_t\|_2 \label{eqn29}\\
    &\geq -\epsilon_2\sqrt{\mu + \lambda_1}  \cdot \left( \frac{1 + (\mu+\lambda_1)\|x_t - x^*_t\|_2^2}{2} \right)\\
    &\geq -(\epsilon_2/2) \sqrt{\mu + \lambda_1} -\epsilon_2\sqrt{\mu + \lambda_1}  \cdot \left( \frac{(\mu+\lambda_1)\|x_t - \Tilde{x}_t + \Tilde{x}_t - x^*_t\|_2^2}{2} \right)\\
    &\geq -(\epsilon_2/2) \sqrt{\mu + \lambda_1} - 2\epsilon_2 \sqrt{\mu + \lambda_1} (H_t + \lambda_1 M_t + H^*_t + \lambda_1 M^*_t)
\end{align}
where we assume the approximation is also measurable as $\|\Tilde{x}_t - x_t\|_2 = \epsilon_2$. Plugging this in, we have
\begin{align}
\begin{split}
    \sum_{t=1}^T H_t + \lambda_1 M_t \leq \sum_{t=1}^T &\left(H^*_t + \lambda_1\left(1 + \frac{\lambda_1}{\mu} \right) M^*_t \right) + 2\epsilon_2 \sqrt{\mu + \lambda_1} \left(\sum_{t=1}^T H_t + \lambda_1 M_t + \sum_{t=1}^T H^*_t + \lambda_1 M^*_t \right)\\
    &+ (\epsilon_2 T/2) \sqrt{\mu + \lambda_1} + \epsilon_1 T
\end{split}
\end{align}
Rearranging, we get
\begin{align}
     (1-2\epsilon_2 \sqrt{\mu + \lambda_1}) \sum_{t=1}^T H_t + \lambda_1 M_t \leq (1+2\epsilon_2 \sqrt{\mu + \lambda_1})\sum_{t=1}^T \left(H^*_t + \lambda_1\left(1 + \frac{\lambda_1}{\mu} \right) M^*_t \right) + (\epsilon_2 T/2) \sqrt{\mu + \lambda_1} + \epsilon_1 T
\end{align}
and with $\lambda_1 \in (0,1]$
\begin{align}
    \left(1 - 2\epsilon_2 \frac{\sqrt{\mu + \lambda_1}}{\lambda_1} \right)\left(\sum_{t=1}^T H_t + M_t \right) \leq \left( CR_{\robd{}} + 2\epsilon_2 \frac{\sqrt{\mu + \lambda_1}}{\lambda_1} \right) \left(\sum_{t=1}^T H^*_t + M^*_t\right) + (\epsilon_2 T/2) \frac{\sqrt{\mu + \lambda_1}}{\lambda_1} + \epsilon_1 \frac{T}{\lambda_1}
\end{align}
where, $CR_{\robd{}} = \min\left\{\frac{1}{\lambda_1},1+\frac{\lambda_1}{\mu}\right\}$. The final result is
\begin{align}\label{eqn30}
    \sum_{t=1}^T H_t + M_t \leq \left( \frac{CR_{\robd{}} + 2\epsilon_2 \frac{\sqrt{\mu + \lambda_1}}{\lambda_1}}{1 - 2\epsilon_2 \frac{\sqrt{\mu + \lambda_1}}{\lambda_1}} \right) \left(\sum_{t=1}^T H^*_t + M^*_t\right) + \left(\frac{\frac{\epsilon_2 \sqrt{\mu + \lambda_1}}{2\lambda_1} + \frac{\epsilon_1}{\lambda_1}}{1 - 2\epsilon_2 \frac{\sqrt{\mu + \lambda_1}}{\lambda_1}} \right)T
\end{align}
\[
\pushQED{\qed} 
\qedhere
\popQED
\]  

\subsection{Proof of Theorem \ref{main_thm:CR}}\label{appendix_ssec: ACORD_CR}
Now for the multi-agent setting, $H_t:= \sum_{i=1}^N f_t^i(x_t^i) + \frac{\beta}{2}\|\mathbf{x}_t\|_{G_t}^2$, $H_t^*:= \sum_{i=1}^N f_t^i(x_t^{i,*}) + \frac{\beta}{2}\|\mathbf{x}_t^*\|_{G_t}^2$, $M_t^i := \frac{1}{2}\|x^i_t - x^i_{t-1}\|_2^2$ and $M_t^{i,*} := \frac{1}{2}\|x^{i,*}_t - x^{i,*}_{t-1}\|_2^2$.

To apply the approximate \robd{} framework, one needs to verify that multi-agent potential functions behave in the same way as \robd{}. Using the multi-agent potential function defined in \eqref{eqn: potential_fn},
\begin{align}
    \phi(\Tilde{\mathbf{x}}_t,\mathbf{x}^*_t) - \phi(\mathbf{x}_t,\mathbf{x}^*_t) &= \sum_{i=1}^N \left(\frac{\mu_i + \lambda_1^i}{2} \right)\left\| x^{*,i}_t - \Tilde{x}^{i}_t \right\|_2^2 - \sum_{i=1}^N \left(\frac{\mu_i + \lambda_1^i}{2} \right)\left\| x^{*,i}_t - x^{i}_t \right\|_2^2\\
    &= \sum_{i=1}^N \left(\frac{\mu_i + \lambda_1^i}{2} \right)\left\| x^{*,i}_t - \Tilde{x}^{i}_t \right\|_2^2 - \left(\frac{\mu_i + \lambda_1^i}{2} \right)\left\| x^{*,i}_t - x^{i}_t \right\|_2^2\\
    &\geq \sum_{i=1}^N -\left(\mu_i+\lambda_1^i \right) \|\Tilde{x}^i_t - x^i_t\|_2\cdot\|x^i_t - x^{*,i}_t\|_2
\end{align}
from \eqref{eqn29}.
Now, we apply Cauchy-Schwartz in the following way,
\begin{align}
    \phi(\Tilde{\mathbf{x}}_t,\mathbf{x}^*_t) - \phi(\mathbf{x}_t,\mathbf{x}^*_t) &\geq \sum_{i=1}^N -\left(\mu_i+\lambda_1^i \right) \|\Tilde{x}^i_t - x^i_t\|_2\cdot\|x^i_t - x^{*,i}_t\|_2\\
    &= -\sum_{i=1}^N \sqrt{\mu_i+\lambda_1^i} \|\Tilde{x}^i_t - x^i_t\|_2\cdot \sqrt{\mu_i+\lambda_1^i}\|x^i_t - x^{*,i}_t\|_2\\
    &\geq - \sqrt{\sum_{i=1}^N \left(\mu_i+\lambda_1^i \right)\|\Tilde{x}^i_t - x^i_t\|_2^2} \sqrt{\sum_{i=1}^N \left(\mu_i+\lambda_1^i \right)\|x^i_t - x^{*,i}_t\|_2^2}\\
    &\geq - \sqrt{\left(\max_i \mu_i+\lambda_1^i \right) \|\Tilde{\mathbf{x}}_t - \mathbf{x}_t\|_2^2} \sqrt{\sum_{i=1}^N \left(\mu_i+\lambda_1^i \right)\|x^i_t - x^{*,i}_t\|_2^2}\\
    &\geq - \sqrt{\left(\max_i \mu_i+\lambda_1^i \right)}\|\Tilde{\mathbf{x}}_t - \mathbf{x}_t\|_2 \cdot \left(\frac{1}{2} + \sum_{i=1}^N \frac{\mu_i+\lambda_1^i}{2}\|x^i_t - x^{*,i}_t\|_2^2 \right)\\
    &\geq - \sqrt{\max_i \mu_i+\lambda_1^i} \epsilon_2/2 -2\epsilon_2 \sqrt{\max_i \mu_i+\lambda_1^i} \left(  H_t + \sum_{i=1}^N \lambda_1^i M^i_t +  H^{*}_t + \sum_{i=1}^N \lambda_1^i M^{i,*}_t\right)
\end{align}
Therefore, \eqref{eqn30} with $\lambda_1^i = \frac{2}{1+\sqrt{1+\frac{4}{\mu_i}}}$ will translate to
\begin{align}\label{eqn31}
    \sum_{t=1}^T \left(H_t + \sum_{i=1}^N M^{i}_t \right) \leq \frac{CR_{*} + 2\epsilon_2 \frac{\sqrt{\max_i \mu_i + \lambda_1^i}}{\min_i \lambda_1^i}}{1-2\epsilon_2 \frac{\sqrt{\max_i \mu_i + \lambda_1^i}}{\min_i \lambda_1^i}} \sum_{t=1}^T \left(H^*_t + \sum_{i=1}^N M^{i,*}_t \right)  + \left( \frac{\frac{\epsilon_2\sqrt{\max_i \mu_i + \lambda_1^i}}{2\min_i \lambda_1^i} + \frac{\epsilon_1}{\min_i \lambda_1^i}}{1-2\epsilon_2 \frac{\sqrt{\max_i \mu_i + \lambda_1^i}}{\min_i \lambda_1^i}} \right)T
\end{align}
Now, we set $\epsilon_2$ and $\epsilon_1$. Recall from Theorem \ref{thm: alt_min_conv_l2} the exponentially fast convergence of Alternating Minimization for 
\begin{align*}
    \mathbbm{F}_t(\mathbf{x},\mathbf{z}) = \sum_{i=1}^N \left\{ f_t^i(x^i) + \frac{\lambda_1^i}{2}\|x^i - x^i_{t-1}\|_2^2 + \beta \sum\limits_{\mathcal{E}_t \ni e \ni i}\|A_t^{e}(x) - A_t^{e}\left(z_e\right)\|_{2}^2 \right\}
\end{align*}
that is,
\begin{align}
    \mathbbm{F}_t(\mathbf{x}_k,\mathbf{z}_k) - \mathbbm{F}_t(\mathbf{\Tilde{x}}_t,\mathbf{z}_*) \leq \left(1-\frac{\sigma_t}{4\beta l} \right)^{k-1} (\mathbbm{F}_t(\mathbf{x}_0,\mathbf{z}_0) - \mathbbm{F}_t(\mathbf{x}^R_t,\mathbf{z}_*)).
\end{align}
Now, use the strong convexity and smoothness (in $\mathbf{z}$) of $F(\mathbf{x},\mathbf{z})$ to have
\begin{align}
    \epsilon_2 = \|\mathbf{x}_{k+1} - \mathbf{\Tilde{x}}_t\|_2 &\leq \left\|(\mathbf{x}_{k+1},\mathbf{z}_{k}) - (\mathbf{\Tilde{x}}_t,\mathbf{z}_*)\right\|_2\\
    &= \left\|\mathbf{w}_{k+\frac{1}{2}} - \mathbf{w}_*\right\|_2\\
    &\leq \frac{\left\|\nabla \mathbbm{F}_t\left(\mathbf{w}_{k+\frac{1}{2}}\right) \right\|_2}{\sigma_t}\\
    &\leq \frac{\sqrt{8\beta l\left(\mathbbm{F}_t\left(\mathbf{w}_{k+\frac{1}{2}}\right) - \mathbbm{F}_t\left(\mathbf{w}_{k+\frac{3}{2}}\right) \right) }}{\sigma_t}\\
    &\leq \frac{\sqrt{8\beta l \left(\mathbbm{F}_t\left(\mathbf{w}_{k+\frac{1}{2}}\right) - \mathbbm{F}_t\left(\mathbf{w}_{*}\right) \right) }}{\sigma_t}\\
    &\leq \frac{\sqrt{8\beta l \left(\mathbbm{F}_t\left(\mathbf{w}_{k}\right) - \mathbbm{F}_t\left(\mathbf{w}_{*}\right) \right) }}{\sigma_t}\\
    &\leq \frac{\sqrt{8\beta l \left(1-\frac{\sigma_t}{4\beta l} \right)^{k-1} \left(\mathbbm{F}_t(\mathbf{w}_0) - \mathbbm{F}_t(\mathbf{w}_*)\right) }}{\sigma_t} = \left(\frac{\min_i \lambda_1^i}{2 \sqrt{\max_i \mu_i + \lambda_1^i}}\right)\frac{1}{2T^2}\label{eqn12}
\end{align}

Now, we control $\epsilon_1$. Observe that $(\Tilde{z}_e)_k = \frac{(x^i_t)_k +  (x^j_t)_k}{2} \mathbbm{1}_{e = (i,j)}$. This, along with $(\mathbf{x}_t)_k$, in \eqref{eqn12}
We see that many iterations will handle $\epsilon_1$ too. We start with
\begin{align}
    F_t((\mathbf{x}_t)_k) =& \sum_{i=1}^N \left\{ f_t^i((x^i_t)_k) + \frac{\lambda_1^i}{2}\|(x^i_t)_k - x^i_{t-1}\|_2^2\right\} + \frac{\beta}{2} \sum_{(i,j) \in \mathcal{E}} \|A^{(i,j)}_t \left((x^i_t)_k\right) - A^{(i,j)}_t \left((x^j_t)_k\right)\|_2^2\\
    \begin{split}
        =& \sum_{i=1}^N \left\{ f_t^i((x^i_t)_k) + \frac{\lambda_1^i}{2}\|(x^i_t)_k - x^i_{t-1}\|_2^2\right\}\\
        &+ \frac{\beta}{2} \sum_{e=(i,j) \in \mathcal{E}} \|A^{(i,j)}_t \left((x^i_t)_k\right) -A^{(i,j)}_t\left((z_{e})_k\right) + A^{(i,j)}_t\left((z_{e})_k\right) - A^{(i,j)}_t \left((x^j_t)_k\right)\|_2^2
    \end{split}\\
    \begin{split}
        \leq& \sum_{i=1}^N \left\{ f_t^i((x^i_t)_k) + \frac{\lambda_1^i}{2}\|(x^i_t)_k - x^i_{t-1}\|_2^2\right\}\\
        &+ \frac{\beta}{2} \sum_{e=(i,j) \in \mathcal{E}} 2\|A^{(i,j)}_t \left((x^i_t)_k\right) - A^{(i,j)}_t\left((z_{e})_k\right)\|_2^2 + 2\|A^{(i,j)}_t\left((z_{e})_k\right) - A^{(i,j)}_t \left((x^j_t)_k\right)\|_2^2
    \end{split}\\
    \begin{split}
        =& \sum_{i=1}^N \left\{ f_t^i((x^i_t)_k) + \frac{\lambda_1^i}{2}\|(x^i_t)_k - x^i_{t-1}\|_2^2 + \beta \sum_{\mathcal{E} \ni e \ni i} \|A^{e}_t \left((x^i_t)_k\right) - A^{e}_t\left((z_{e})_k\right)\|_2^2\right\}
    \end{split}\\
    =& \mathbbm{F}_t(\left(\mathbf{x}_t\right)_k,\mathbf{z}_k).
\end{align}

\begin{align}
    \epsilon_1 = \left(H_t + \sum_{i=1}^N \lambda_1^i M_t^i \right) - \left(\Tilde{H}_t + \sum_{i=1}^N \frac{\lambda_1^i}{2}\|\Tilde{x}_t^i - x_{t-1}^i\|_2^2\right) &= F_t((\mathbf{x}_t)_k) - F_t(\Tilde{\mathbf{x}_t})\\
    &= F_t((\mathbf{x}_t)_k) - \mathbbm{F}_t(\mathbf{\Tilde{x}}_t,\mathbf{z}_*) \\
    &\leq \mathbbm{F}_t(\left(\mathbf{x}_t\right)_k,\mathbf{z}_k) - \mathbbm{F}_t(\mathbf{\Tilde{x}}_t,\mathbf{z}_*) \\
    &\leq \left(1-\frac{\sigma_t}{4\beta l} \right)^{k-1} (\mathbbm{F}_t(\mathbf{x}_t)_0,\mathbf{z}_0) - \mathbbm{F}_t(\mathbf{\Tilde{x}}_t,\mathbf{z}_*))\\
    &\leq \frac{\sigma_t^2}{8 \beta l}\left(\frac{\min_i \lambda_1^i}{2 \sqrt{\max_i \mu_i + \lambda_1^i}}\right)^2 \frac{1}{4T^4}\\
    &\leq 2\beta (m^2/l)\left(\frac{\min_i \lambda_1^i}{2 \sqrt{\max_i \mu_i + \lambda_1^i}}\right)^2\frac{1}{4T^4}    
\end{align}
where we use the fact that $\sigma_t \leq 4 \beta m$ from \eqref{eqn32}. Putting $\epsilon_1$ and $\epsilon_2$ back into \eqref{eqn31},
\begin{align}
    \sum_{t=1}^T \left(H_t + \sum_{i=1}^N M^{i}_t \right) &\leq \left(\frac{CR_{*} + 1/2T^2}{1-1/2T^2} \right) \sum_{t=1}^T \left(H^*_t + \sum_{i=1}^N M^{i,*}_t \right) + \left( \frac{\frac{1}{8T^2} + \left(\frac{\min_i \lambda_1^i}{\max_i \mu_i + \lambda_1^i}\right)\frac{\beta (m^2/l)}{8T^4}}{1-1/2T^2} \right)T\\
    &\leq \left(\frac{CR_{*} + 1/2T^2}{1-1/2T^2} \right) \sum_{t=1}^T \left(H^*_t + \sum_{i=1}^N M^{i,*}_t \right) + \left( \frac{1}{4} + \left(\frac{\min_i \lambda_1^i}{\max_i \mu_i + \lambda_1^i}\right)\frac{\beta (m^2/l)}{4T^2} \right)\frac{T}{2T^2-1}\\
    &\leq \left(\frac{CR_{*} + 1/2T^2}{1-1/2T^2} \right) \sum_{t=1}^T \left(H^*_t + \sum_{i=1}^N M^{i,*}_t \right) + \frac{1+\beta (m^2/l)}{4T}
\end{align}
\[
\pushQED{\qed} 
\qedhere
\popQED
\]  

\section{Dependence of $\sigma_t$ on graph: Proof of Theorem \ref{main_thm:graph_dependence}}\label{appendix_sec: graph_dependence}
Recall that strong-convexity parameter $\sigma$ (we drop the $t$ subscript as it is understood here) of $\mathbbm{F}_t(\mathbf{x},\mathbf{z})$ is the smallest eigenvalue of
\begin{align*}
        \nabla^2 F(x,z) \succeq 
        \underbrace{\begin{bmatrix}
            \mathbf{D}^{f}_{Nd\times Nd} & \mathbf{0}_{Nd \times |\mathcal{E}_t|d}\\
            \mathbf{0}_{|\mathcal{E}_t|d \times Nd} & \mathbf{0}_{|\mathcal{E}_t|d \times |\mathcal{E}_t|d}
        \end{bmatrix}}_A + 2\beta m 
        \underbrace{\begin{bmatrix}
            \mathbf{D}^G_{Nd \times Nd} & \mathbf{G}_z\\
            \mathbf{G}_z^T & 2\mathbf{I}_{|\mathcal{E}_t|d}
        \end{bmatrix}}_B.
\end{align*}
Let $M = A + 2\beta m B$. 
\begin{align}\label{eqn18}
    M = \begin{bmatrix}
            \mathbf{D}^{f}_{Nd\times Nd} + 2\beta m \mathbf{D}^G_{Nd \times Nd} & 2\beta m \mathbf{G}_z\\
            2\beta m \mathbf{G}_z^T & 4\beta m \mathbf{I}_{|\mathcal{E}_t|d}
        \end{bmatrix} \succeq \begin{bmatrix}
            \min_i \{ \mu_i + \lambda_1^i + 2\beta m \mathcal{D}_i \} \mathbf{I}_{Nd \times Nd} & 2\beta m \mathbf{G}_z\\
            2\beta m \mathbf{G}_z^T & 4\beta m \mathbf{I}_{|\mathcal{E}_t|d}
        \end{bmatrix} = M_1
\end{align}
Let the SVD of $\mathbf{G}_z = U \Sigma V^T$ with singular values $\sigma_1 \geq \sigma_2 \geq \ldots \sigma_r$ . The next steps are for assuming $d=1$ but the analysis can be extended giving the same result for $d>1$ . The left and right eigenvectors be $\{u_1,\ldots,u_r\} \subset \R^{N}$ and $\{v_1,\ldots,v_r\} \subset \R^{|\mathcal{E}_t|}$ respectively. Further, let the remaining eigenvectors in each space be $\{u_{r+1},\ldots,u_N\}$ and $\{v_{r+1},\ldots,v_{|\mathcal{E}_t|}\}$ resp. The following are the eigenvectors of $M_1$,
\begin{align}
    \underbrace{\begin{bmatrix}
        u_{r+1} \\ \mathbf{0}
    \end{bmatrix}, \ldots, \begin{bmatrix}
        u_{N} \\ \mathbf{0}
    \end{bmatrix}}_{(1)}, \underbrace{\begin{bmatrix}
        \mathbf{0} \\ v_{r+1}
    \end{bmatrix}, \ldots,
    \begin{bmatrix}
        \mathbf{0} \\ v_{|\mathcal{E}_t|},
    \end{bmatrix}}_{(2)}, \underbrace{\begin{bmatrix}
        b^1_1 u_1 \\
        b^1_2 v_1
    \end{bmatrix},\ldots,
    \begin{bmatrix}
        b^r_1 u_r \\
        b^r_2 v_r
    \end{bmatrix}}_{(3)},\underbrace{\begin{bmatrix}
        c^1_1 (-u_1) \\
        c^1_2 v_1
    \end{bmatrix},\ldots,\begin{bmatrix}
        c^r_1 (-u_r) \\
        c^r_2 v_r
    \end{bmatrix}}_{(4)}.
\end{align}
For $(1)$ and $(2)$, it is easy to see from the fact that $\mathbf{G}_z^T u_i = 0$ for $i>r$ and $\mathbf{G}_z v_i = 0$ for $i>r$. The corresponding eigenvalues are $\min_i \{ \mu_i + \lambda_1^i + 2\beta m d_i \}$ and $4\beta m$ resp. Denote $a = \min_i \{ \mu_i + \lambda_1^i + 2\beta m d_i \}$. For $(3)$, we need to find the appropriate $(b^k_1,b^k_2)$ 
\begin{align}
    \begin{bmatrix}
            a \mathbf{I}_{N} & 2\beta m \mathbf{G}_z\\
            2\beta m \mathbf{G}_z^T & 4\beta m \mathbf{I}_{|\mathcal{E}_t|}
        \end{bmatrix}\begin{bmatrix}
        b^k_1 u_k \\
        b^k_2 v_k
    \end{bmatrix} = \begin{bmatrix}
        a b^k_1 u_k + 2\beta m \sigma_k b^k_2 u_k\\
        2\beta m \sigma_k b^k_1 v_k + 4 \beta m b^k_2 v_k
    \end{bmatrix}
\end{align}
Therefore, $(b^k_1,b^k_2)$ need to satisfy
\begin{align}
\frac{a b^k_1 + 2\beta m \sigma_k b^k_2}{2\beta m \sigma_k b^k_1 + 4 \beta m b^k_2}
 = \frac{b^k_1}{b^k_2}
\end{align}
and using same method, $(c^k_1,c^k_2)$ need to satisfy
\begin{align}
    \frac{a c^k_1 - 2\beta m \sigma_k c^k_2}{-2\beta m \sigma_k c^k_1 + 4 \beta m c^k_2}
 = \frac{c^k_1}{c^k_2}
\end{align}
Let's solve for $b^k_2/b^k_1 > 0$ as the negative version will be handled by $(c^k_1,c^k_2)$,
\begin{align}
    \frac{a + 2\beta m \sigma_k x}{2\beta m \sigma_k (1/x) + 4\beta m} = 1
\end{align}
or
\begin{align}
    \frac{a}{2\beta m \sigma_k} + x = \frac{1}{x} + \frac{4\beta m}{2\beta m \sigma_k}
\end{align}
\begin{align}
    x^2 - x\left(\frac{4\beta m - a}{2\beta m \sigma_k}\right) - 1 = 0
\end{align}
\begin{align}
    \frac{b^k_2}{b^k_1} = \frac{1}{2}\left( \frac{4\beta m - a}{2\beta m \sigma_k} + \sqrt{\left(\frac{4\beta m - a}{2\beta m \sigma_k}\right)^2  +4} \right)
\end{align}
and the eigenvalue is
\begin{align}
    \frac{a b^k_1 + 2\beta m \sigma_k b^k_2}{b^k_1} &= a + 2\beta m \sigma_k x\\
    &= a + \frac{1}{2}\left( 4\beta m - a + \sqrt{(4\beta m - a)^2 + 4(2\beta m \sigma_k)^2} \right)\\
    &= \frac{1}{2}\left( 4\beta m + a + \sqrt{(4\beta m - a)^2 + 4(2\beta m \sigma_k)^2} \right)
\end{align}
Now, we solve for $c^k_2/c^k_1 > 0$,
\begin{align}
    \frac{a - 2\beta m \sigma_k x}{-2\beta m \sigma_k (1/x) + 4\beta m} = 1
\end{align}
or
\begin{align}
    \frac{a}{2\beta m \sigma_k} -x = -\frac{1}{x} + \frac{4\beta m}{2\beta m \sigma_k}
\end{align}
\begin{align}
    x^2 + x\left(\frac{4\beta m - a}{2\beta m\sigma_k} \right) -1 = 0
\end{align}
giving
\begin{align}
    \frac{c^k_2}{c^k_1} = \frac{1}{2} \left(-\left(\frac{4\beta m - a}{2\beta m\sigma_k} \right) + \sqrt{\left(\frac{4\beta m - a}{2\beta m\sigma_k} \right)^2 + 4} \right)
\end{align}
and the eigenvalue being
\begin{align}
    \frac{a c^k_1 - 2\beta m \sigma_k c^k_2}{c^k_1} &= a - 2\beta m \sigma_k x\\
    &= a - \frac{1}{2}\left( a-4\beta m +\sqrt{(4\beta m-a)^2 + 4(2\beta m\sigma_k)^2} \right)\\
    &= \frac{1}{2}\left( a + 4\beta m - \sqrt{(4\beta m-a)^2 + 4(2\beta m\sigma_k)^2} \right).
\end{align}
Therefore, $\sigma$ being the smallest eigenvalue being satisfies
\begin{align}
    \sigma \geq \frac{1}{2}\left( a + 4\beta m - \sqrt{(4\beta m-a)^2 + 4(2\beta m\sigma^{\mathbf{G}_z}_{\max})^2} \right)
\end{align}
For $\mathcal{D}$-regular graphs, we can evaluate further. First,
\begin{align}\label{eqn15}
    a = \min_i \{2\beta m \mathcal{D}_i + \mu_i + \lambda_1^i\} = 2\beta m (\mathcal{D} + \min_i \kappa_i),
\end{align}
where $\kappa_i = \frac{\mu_i + \lambda_1^i}{2\beta m}$ represents the relative influence of local costs v/s dissimilarity costs. Further,
\begin{align}\label{eqn16}
    \sigma \geq \frac{1}{2} \left(\frac{16 a \beta m - 16 (\beta m)^2 (\sigma^{\mathbf{G}_z}_{\max})^2}{a + 4\beta m + \sqrt{(4\beta m-a)^2 + 4(2\beta m\sigma^{\mathbf{G}_z}_{\max})^2}} \right).
\end{align}
For a $\mathcal{D}$-regular graph,
\begin{align}
    \mathbf{G}_z \mathbf{G}_z^T = \mathcal{D}\cdot I + \mathbf{A}^G
\end{align}
implying
\begin{align}
    \sigma^{\mathbf{G}_z \mathbf{G}_z^T}_i = \mathcal{D}+  \sigma^{\mathbf{A}^G}_i
\end{align}
and the largest eigenvalue of the adjacency matrix for a $d$-regular graph is $d$ and smallest is larger than $-d$ (by Perron-Frobeneus Theorem). Therefore,
\begin{align}\label{eqn17}
    (\sigma^{\mathbf{G}_z}_{\max})^2 = \sigma^{\mathbf{G}_z \mathbf{G}_z^T}_{\max} = 2\mathcal{D}.
\end{align}
Plugging \eqref{eqn15} and \eqref{eqn17} into \eqref{eqn16},
\begin{align}
    \sigma &\geq \frac{1}{2} \left(\frac{32 (\beta m)^2 (\mathcal{D} + \min_i \kappa_i) - 32 (\beta m)^2 \mathcal{D}}{a + 4\beta m + \sqrt{(4\beta m-a)^2 + 32 (\beta m)^2 \mathcal{D}}} \right)\label{eqn19}\\
    &= \frac{(16 \beta m)^2 \min_i \kappa_i}{a + 4\beta m + \sqrt{(4\beta m-a)^2 + 32 (\beta m)^2 \mathcal{D}}}\\
    &= 4\beta m \left( \frac{\min_i \kappa_i}{1 + \frac{a}{4\beta m} + \sqrt{\left(1-\frac{a}{4\beta m}\right)^2 + 2\mathcal{D}}} \right)\\
    &= 4\beta m \left( \frac{\min_i \kappa_i}{1 + \frac{\mathcal{D}+ \min_i \kappa_i}{2} + \sqrt{\left(1-\frac{\mathcal{D}+ \min_i \kappa_i}{2}\right)^2 + 2\mathcal{D}}} \right)\label{eqn32}\\
\end{align}
and this is tight. We can verify it by seeing that \eqref{eqn18} (and consequently \eqref{eqn19}) is an equality for $\mu_i = \mu$ $\forall$ $i$. We can further upper bound it for $\mu_i = \mu$ as
\begin{align}
    \sigma &= 4\beta m \left( \frac{\min_i \kappa_i}{1 + \frac{\mathcal{D}+ \min_i \kappa_i}{2} + \sqrt{\left(1-\frac{\mathcal{D}+ \min_i \kappa_i}{2}\right)^2 + 2\mathcal{D}}} \right)\\
    &\leq \frac{2(\mu + \lambda_1)}{\mathcal{D} +\kappa}
\end{align}
with $\kappa = \frac{\mu+\lambda_1}{2\beta m}$, where $\lambda_1 = \frac{2}{1+\sqrt{1+\frac{4}{\mu}}}$. Also for $\mathcal{D}\geq 2$ we have,
\begin{align}
    \sigma &= \frac{\mu + \lambda_1}{1 + \frac{\mathcal{D}+ \mu + \lambda_1}{2} + \sqrt{\left(1-\frac{\mathcal{D}+ \kappa}{2}\right)^2 + 2\mathcal{D}}}\\
    &\geq 4\beta m  \frac{\mu + \lambda_1}{1 + \frac{\mathcal{D}+ \mu + \lambda_1}{2} + \sqrt{\left(1-\frac{\mathcal{D}+ \kappa}{2}\right)^2} + \sqrt{2\mathcal{D}}}  \\
    &\geq 4\beta m \left( \frac{\mu + \lambda_1}{2\mathcal{D} + \kappa}  \right)\\
    &= \frac{2(\mu+\lambda_1)}{2\mathcal{D} + \kappa}
\end{align}

We have therefore shown that for $\mu_i = \mu$ and $\mathcal{D}$-regular graph $\mathcal{G}_t$ for $\mathcal{D}\geq 2$,
\begin{align}
    \frac{2(\mu+\lambda_1)}{2\mathcal{D} + \kappa} \leq \sigma_t \leq \frac{2(\mu + \lambda_1)}{\mathcal{D} +\kappa}
\end{align}

The effect on $K_t$ is as follows
\begin{align}
    K_t &= \frac{\log \left(\frac{T^4 \cdot 32\beta l N(M_f + M_s^2/2)(\max_i \mu_i + \lambda_1^i)}{(\sigma_t\min_i \lambda_1^i)^2} \right)}{\log\left( \frac{4\beta l}{4\beta l - \sigma_t} \right)}\\
    &= \frac{\log (N/\sigma_t^2) + \log \left(\frac{T^4 \cdot 32\beta l (M_f + M_s^2/2)(\max_i \mu_i + \lambda_1^i)}{(\min_i \lambda_1^i)^2} \right)}{\log \left(\frac{4\beta l}{4\beta l - \sigma_t}\right)}\\
    &= \frac{-\log (N/\sigma_t^2) - \log \left(\frac{T^4 \cdot 32\beta l (M_f + M_s^2/2)(\max_i \mu_i + \lambda_1^i)}{(\min_i \lambda_1^i)^2} \right)}{\log \left(1-\frac{\sigma_t}{4\beta l}\right)}
\end{align}
For large enough $\mathcal{D}$ and small $\mu$, $\frac{\sigma_t}{4\beta l}$ is small. $\frac{-1}{\log \left(1-\frac{1}{x}\right)} \approx b\cdot x$ for large $x$, meaning
\begin{align}
    K_t = \frac{4b\beta l}{\sigma_t} \left( \log (N/\sigma_t^2) + \underbrace{\log \left(\frac{T^4 \cdot 32\beta l (M_f + M_s^2/2)(\max_i \mu_i + \lambda_1^i)}{(\min_i \lambda_1^i)^2} \right)}_{c} \right)
\end{align}
and hence,
\begin{align}
    \left(\frac{\mathcal{D} +\kappa}{2(\mu + \lambda_1)}\right) \left(\log \left(N \cdot \left(\frac{\mathcal{D} +\kappa}{2(\mu + \lambda_1)}\right)^2 \right) +c\right) \leq \frac{K_t}{4 b\beta l} \leq \left(\frac{2\mathcal{D} +\kappa}{2(\mu + \lambda_1)}\right) \left(\log \left(N \cdot \left(\frac{2\mathcal{D} +\kappa}{2(\mu + \lambda_1)}\right)^2 \right) +c\right)
\end{align}
proving that $K_t = \Theta(\mathcal{D} \log N \mathcal{D}^2)$.
\[
\pushQED{\qed} 
\qedhere
\popQED
\]

\section{Lower Bound}\label{appendix_sec: lower_bounds}

\begin{theorem}
    In the decentralized setting, consider the agents' hitting costs $\{f^i_t\}_{i,t}$ to be $\mu$-strongly convex and the switching costs to be squared $\ell_2$-norm. Further, consider any dissimilarity that is degenerate along the $x^1 = \ldots = x^N$ line. For weight $\beta > 0$, the competitive ratio is lower bounded as 
    \begin{align}
        CR_{ALG} \geq \frac{1}{2} + \frac{1}{2}\sqrt{1+\frac{4}{\mu}}
    \end{align}
    with no helping effect from the dissimilarity cost.
\end{theorem}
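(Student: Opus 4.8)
The plan is to reduce the $N$-agent instance to a single-agent worst case by symmetrizing across agents, and then invoke the single-agent competitive lower bound of \cite{GoelLinWierman19}. I would fix an arbitrary decentralized algorithm \alg{} and feed every agent the same hitting cost in every round, $f_t^i=g_t$ for all $i\in[N]$, where $\{g_t\}_{t=1}^T$ is taken from the single-agent adversarial family of \cite{GoelLinWierman19} at strong-convexity parameter $\mu$; the graph $\mathcal{G}_t$ can be anything (e.g.\ fixed and connected) and $\beta>0$ is arbitrary.

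First I would bound \opt{}. Let $\{x_t^*\}$ be the offline optimum of the single-agent instance $\{g_t\}$, of cost $\text{Cost}^{1}_{\opt}[1,T]$. The action profile $x_t^i\equiv x_t^*$ is feasible; its total hitting-plus-switching cost equals $N\cdot\text{Cost}^{1}_{\opt}[1,T]$, and every dissimilarity term $s_t^{(i,j)}(x_t^*,x_t^*)$ is zero because $s$ is degenerate along $x^1=\dots=x^N$ — regardless of $\beta$. Hence $\text{Cost}_{\opt}[1,T]\le N\cdot\text{Cost}^{1}_{\opt}[1,T]$, which is the only direction I need.

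Next I would lower-bound \alg{}. On this symmetric instance every agent observes identical data and runs the same local rule, so all agents produce a common trajectory $\{\tilde x_t\}$; if \alg{} is not already symmetric, I would first replace it by its average over relabelings of the agents, which cannot increase the worst-case ratio (by convexity of the hitting and switching costs and nonnegativity of the dissimilarity cost), so this is without loss of generality. The trajectory $\{\tilde x_t\}$ is a legitimate causal single-agent online algorithm played against the \cite{GoelLinWierman19} construction, so along that family $\sum_t\big(g_t(\tilde x_t)+\tfrac12\|\tilde x_t-\tilde x_{t-1}\|_2^2\big)\ge \text{CR}_{*}\cdot\text{Cost}^{1}_{\opt}[1,T]-o(1)$, with $\text{CR}_{*}=\tfrac12+\tfrac12\sqrt{1+4/\mu}$. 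Discarding the nonnegative dissimilarity terms,
\[
\text{Cost}_{\alg}[1,T]\ \ge\ N\sum_t\Big(g_t(\tilde x_t)+\tfrac12\|\tilde x_t-\tilde x_{t-1}\|_2^2\Big)\ \ge\ \text{CR}_{*}\,N\,\text{Cost}^{1}_{\opt}[1,T]-o(1)\ \ge\ \text{CR}_{*}\,\text{Cost}_{\opt}[1,T]-o(1),
\]
which is the claimed bound ($N$ is a fixed constant, so $N\cdot o(1)=o(1)$).

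The step I expect to be the main obstacle is the claim that every agent behaves identically: a fully general decentralized algorithm could try to exploit agent identities to break symmetry, so one must argue this cannot help in the worst case. I would handle it via the averaging argument above, or — as a cleaner and fully rigorous fallback — by also allowing the adversary to choose the empty graph $\mathcal{E}_t=\emptyset$, in which case the objective splits into $N$ independent single-agent problems, \alg{} splits into $N$ non-communicating online algorithms, and summing per-agent \cite{GoelLinWierman19} lower bounds yields $\text{CR}_{\alg}\ge\text{CR}_{*}$ outright; the homogeneous construction above then additionally shows the bound survives for nonempty (even complete, fixed) graphs and any $\beta>0$, which is the substantive content. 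A minor secondary point is to align the $o(1)$ slack in the competitive-ratio definition \eqref{eqn: CR_defn} with the form of the single-agent lower bound (a family of instances with $\text{Cost}^{1}_{\opt}[1,T]\to\infty$ and ratio $\to\text{CR}_{*}$), which is precisely what precludes any multiplicative constant below $\text{CR}_{*}$ with $o(1)$ additive slack.
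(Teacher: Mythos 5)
Your proposal is correct and follows essentially the same route as the paper's proof: feed all agents an identical single-agent adversarial sequence from \cite{GoelLinWierman19}, use degeneracy of the dissimilarity cost along the diagonal so that both the benchmark and the (symmetrized) algorithm incur zero dissimilarity cost, and reduce to the single-agent lower bound. Your explicit Jensen-type symmetrization of the algorithm is a slightly more careful justification of the step the paper handles by asserting that, by symmetry and convexity, the algorithm's optimal final-round move lies on the line $x^1=\ldots=x^N$; otherwise the two arguments coincide.
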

\begin{proof}
    We consider $N$ agents in the action space $\R$. The dissimilarity cost class considered is the most general. The degeneracy along $x^1 = \ldots = x^N$ is necessary to ensure the fact that all actions being the same translates to zero dissimilarity cost.

    The hitting costs for all agents is as follows,
    \begin{align}
        f^i_t(x) = \begin{cases}
            \frac{\mu}{2}(x^i)^2 & t \in \{1,\ldots,n\}\\
            \frac{\mu'}{2}(x^i -1)^2 & t = n+1
        \end{cases} \text{ } \forall \text{ } i \in \{1,\ldots,N\}.
    \end{align}
    This means the total hitting cost for the system can be written as
    \begin{align}
        f_t(\mathbf{x}) = \begin{cases}
            \frac{\mu}{2}\|\mathbf{x}\|_2^2 & t \in \{1,\ldots,T\}\\
            \frac{\mu'}{2}\|\mathbf{x} - \mathbbm{1}\|_2^2 & t = T+1
        \end{cases}
    \end{align}
    where $\begin{bmatrix}
        x^1 & \ldots & x^N
    \end{bmatrix}^T = \mathbf{x} \in \R^{N}$ and $\mathbbm{1} = \begin{bmatrix}
        1 & 1 &\ldots & 1
    \end{bmatrix}^T$. Now, any ALG has two options, either to move at $t = T+1$ or before. If it moves before, the environment stops at that moment and the competitive ratio  = $\frac{>0}{0} = \infty$ as the adversary will not be moving. So, the only option a competitive algorithm has is to move at $t = T+1$ and stay at $\mathbf{0}_N$ before that.

    At $t=T+1$, ALG will move to the point $\mathbf{x} \in \R^N$ such that its total hitting plus switching cost is minimum, that is, 
\begin{align}
    cost(ALG) \geq \min_{x^1,\ldots,x^N} \frac{\mu'}{2}\|\mathbf{x} - \mathbbm{1}\|_2^2 +\frac{1}{2}\|\mathbf{x}^1\|_2^2 + \frac{\beta}{2}s(\mathbf{x})
\end{align}
where $s(\cdot)$ is the general dissimilarity cost class. Based on the symmetry of the optimization problem above with respect to the hitting and switching costs and the fact that $s(\mathbf{x}) = 0$ when all components are same, we conclude that the minimum above occurs at $\mathbf{x} = x\cdot \mathbbm{1}$. Therefore,
\begin{align}
    cost(ALG) \geq N\cdot \min_{x} \frac{\mu'}{2}(x - 1)^2 +\frac{1}{2}x^2 = \frac{N}{2\left(1+\frac{1}{\mu'}\right)}
\end{align}
Instead of the hindsight optimal $OPT$, an algorithm $ADV$ that has hindsight knowledge is used, meaning $cost(ADV) \geq cost(OPT)$. It is clear that for $m' \to \infty$, $ADV$ has to be at $\mathbbm{1}$. We consider ADV to be a sequence of actions of the following form
\begin{align}
    \mathbf{0} = x^*_0 \cdot \mathbbm{1} \to x^*_1 \cdot \mathbbm{1} \to \ldots \to x^*_T \cdot \mathbbm{1} \to x^*_{T+1} \cdot \mathbbm{1} = \mathbbm{1}
\end{align}
which will have a cost of 
\begin{align}
    cost(ADV) = \frac{N \cdot a_T}{2} =  N \cdot \left(\min_{(x^{*}_0,\ldots,x^{*}_{T+1}) \in \mathcal{K}(T,1)} \left\{ \sum_{t=1}^T \frac{\mu}{2} \left(x^{*}_t \right)^2 + \sum_{t=1}^{T+1}\frac{1}{2} \left(x^{*}_t - x^{*}_{t-1} \right)^2 \right\} \right)
\end{align}
where $\mathcal{K}(T,1) = \{(x^0,\ldots,x^{T+1}) \in \R^{T+2}: x_i \leq x_{i+1}, x_0 = 0, x_{T+1} = 1\}$ and $s(x\cdot\mathbbm{1}) = 0$. From Lemma 7 of \cite{GoelLinWierman19}, we know
\begin{align}
    \lim_{T \to \infty} a_T = \frac{-\mu + \sqrt{\mu^2 + 4\mu}}{2}
\end{align}
which gives $cost(ADV) = \frac{N}{2}\left(\frac{-\mu + \sqrt{\mu^2 + 4\mu}}{2}\right)$. The competitive ratio lower bound is
\begin{align}
    CR_{ALG} \geq \lim_{T\to \infty, \mu' \to \infty} \frac{cost(ALG)}{cost(ADV)} \geq \frac{1}{2} + \frac{1}{2}\sqrt{1+\frac{4}{\mu}} 
\end{align}
\end{proof}

\begin{theorem}
For the class of strongly convex hitting costs of the form $f_t(\mathbf{x}) = \sum_{i=1}^N f_t^i \left({x^i} \right) + \beta g(\mathbf{x})$ where $f_t^i(\cdot)$ is $\mu_i$-strongly convex and $g(\cdot)$ is convex (and not strongly convex), the competitive ratio is lower bounded
\begin{align*}
    CR \geq \frac{1}{2} + \frac{1}{2}\sqrt{1 + \frac{4}{\min_i \mu_i}}
\end{align*}
\end{theorem}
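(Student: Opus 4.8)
The plan is to reduce the $N$-agent instance to a single ``hard'' agent and then invoke the single-agent lower bound of \cite{GoelLinWierman19}. First I would relabel the agents so that $\mu_1 = \min_i \mu_i =: \mu$, and choose the dissimilarity function $g \equiv 0$, which is convex and not strongly convex (hence admissible) and switches off the spatial cost entirely, so that the cumulative cost decouples into $N$ independent single-agent SOCO costs. I would then make agents $2,\dots,N$ inert by giving each a fixed quadratic hitting cost $f^j_t(x)=\tfrac{\mu_j}{2}\norm{x}^2$ in every round: their minimizer is always $\mathbf{0}$, the offline optimum holds them at $\mathbf{0}$ at zero cost, and any algorithm that moves them can only pay more. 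For agent $1$ I would install the classical construction: $f^1_t(x)=\tfrac{\mu}{2}\norm{x}^2$ for $t\le T$ and $f^1_{T+1}(x)=\tfrac{\mu'}{2}\norm{x-e}^2$ with $e$ a fixed unit vector and $\mu'\to\infty$.

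The two quantitative pieces then mirror the $\beta>0$ lower bound proved just above. For the online cost, the usual adaptive-halting argument applies: if an algorithm moves any agent off $\mathbf{0}$ at some round $t_0\le T$, then replacing $f^1_{T+1}$ by $\tfrac{\mu}{2}\norm{x}^2$ yields an instance agreeing with ours through round $T$ on which $\opt{}$ pays nothing while the algorithm pays a strictly positive amount, so the ratio is unbounded; hence a competitive algorithm sits at $\mathbf{0}$ for all agents through round $T$, and at round $T+1$ its cost on agent $1$ is at least $\min_x \tfrac{\mu'}{2}\norm{x-e}^2+\tfrac12\norm{x}^2=\tfrac{\mu'}{2(\mu'+1)}$, the other agents contributing nonnegatively. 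For the offline cost I would upper bound $\text{Cost}_{\opt{}}$ by the feasible competitor that holds agents $2,\dots,N$ at $\mathbf{0}$ and drives agent $1$ along the optimal monotone path from $\mathbf{0}$ to $e$; its cost equals $\tfrac{a_T}{2}$, where by Lemma~7 of \cite{GoelLinWierman19} one has $a_T \to \tfrac{-\mu+\sqrt{\mu^2+4\mu}}{2}$. Dividing and letting $\mu'\to\infty$ then $T\to\infty$ gives $\text{CR}\ge \tfrac{2}{-\mu+\sqrt{\mu^2+4\mu}} = \tfrac12 + \tfrac12\sqrt{1+4/\mu}$, with the dissimilarity cost providing no help.

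I expect the only genuinely delicate point to be the handling of $g$. One cannot simply reduce to the previously proved $\beta>0$ bound, because that argument needs $g$ degenerate along $x^1=\dots=x^N$, which a general convex but not strongly convex $g$ need not be; and the heterogeneous parameters $\{\mu_i\}$ force us to isolate the single minimal-$\mu$ agent rather than exploit diagonal symmetry. Hence the cleanest route is the direct construction above, using that for a lower bound the adversary may pick $g$, and $g\equiv 0$ is a legitimate member of the class (consistent with the heterogeneity remark preceding the theorem). The remaining bookkeeping is routine: one just verifies that the inert agents really can be held at zero cost in both the online and offline sequences, so that the $N$-agent ratio collapses exactly onto the single-agent ratio from \cite{GoelLinWierman19}.
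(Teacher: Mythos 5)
Your proposal is correct and follows essentially the same route as the paper's proof: kill the coupling term (you via $g\equiv 0$, the paper via $\beta=0$), pin agents $2,\dots,N$ with stationary quadratics centered at the origin, run the classical single-agent adversarial construction on the minimal-$\mu$ agent with the adaptive-halting argument, and invoke Lemma~7 of \cite{GoelLinWierman19} for the offline cost. The bookkeeping you flag (inert agents costing zero for both the algorithm and the hindsight competitor) is exactly how the paper closes the argument.
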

\begin{proof}
The proof is along the same lines as that in \cite{GoelLinWierman19} but in multi-dimension with change in function form (across time) only along the direction of the minimum strong-convexity parameter. We consider the following instance of the problem with $N$ agents in $\R$ with strong-convexity parameters $\mu_1 \leq \mu_2 \leq \ldots \leq \mu_N$:
\begin{align*}
    f^1_t(x^1_t) = \begin{cases}
        \frac{\mu_1}{2}(x^1_t)^2 & t \in \{1,\ldots,T\}\\
        \frac{m'}{2}(x^1_t - 1)^2 & t = T+1
    \end{cases}
\end{align*}
and for all other agents $i\geq 2$, $f^i_t(x^i_t) = \frac{\mu_i}{2}(x^i_t)^2$ $\forall$ $ t \in \{1,\ldots,T+1\}$. We consider $\beta = 0$ throughout this instance. Now, any ALG has two options, either to move at $t = T+1$ or before. If it moves before, the environment stops at that moment and the competitive ratio  = $\frac{>0}{0} = \infty$ as the adversary will not be moving. So, the only option a competitive algorithm has is to move at $t = T+1$ and stay at $\mathbf{0}_N$ before that.

At $t=T+1$, ALG will move to the point $\mathbf{x} \in \R^N$ such that its total hitting plus switching cost is minimum, that is
\begin{align}
    cost(ALG) \geq \min_{x^1,\ldots,x^N} \frac{m'}{2}(x^1 - 1)^2 +\frac{(x^1)^2}{2} + \sum_{i=2}^N \frac{\mu_i}{2}(x^i)^2 + \frac{(x^i)^2}{2} 
\end{align}
For $x^i$, $i\neq 1$, the optimal point is still zero and so is their hitting plus switching cost, meaning
\begin{align}
    cost(ALG) \geq \min_{x^1} \frac{m'}{2}(x^1 - 1)^2 +\frac{(x^1)^2}{2} = \frac{1}{2\left(1+\frac{1}{m'}\right)}
\end{align}
Instead of the hindsight optimal $OPT$, an algorithm $ADV$ that has hindsight knowledge is used, meaning $cost(ADV) \geq cost(OPT)$. It is clear that for $m' \to \infty$, $ADV$ has to be at $1$ along the first axis. The stationary nature of hitting costs along all other axes $i \geq 2$ make it moot for moving from zero along those directions. Meaning, $ADV$ takes a sequence of points $$(x^{1*}_0 \to \mathbf{0}_{N-1}) \to (x^{1*}_1,\mathbf{0}_{N-1})\to \ldots \to  (x^{1*}_T,\mathbf{0}_{N-1}).$$
Since, the contribution to the cost is zero from other agents, cost is a function of $(x^{1*}_0,\ldots,x^{1*}_{T+1}) \in \R^{T+2}$ with the initial point as $0$ and the final point constrained to be $1$ (to accommodate for $m' \to \infty$). This is exactly same as Lemma 7 in \cite{GoelLinWierman19}, where the following optimization problem is solved to find $cost(ADV)$,
\begin{align}
   cost(ADV) = \frac{a_T}{2} =  \min_{(x^{1*}_0,\ldots,x^{1*}_{T+1}) \in \mathcal{K}(T,1)} \left( \sum_{t=1}^T \frac{\mu_1}{2} \left(x^{1*}_t \right)^2 + \sum_{t=1}^{T+1}\frac{1}{2} \left(x^{1*}_t - x^{1*}_{t-1} \right)^2 \right)
\end{align}
where $\mathcal{K}(T,1) = \{(x^0,\ldots,x^{T+1}) \in \R^{T+2}: x_i \leq x_{i+1}, x_0 = 0, x_{T+1} = 1\}$. The lemma states that $ \lim_{n\to \infty} a_n = \frac{-\mu_1+\sqrt{\mu_1^2 + 4\mu_1}}{2}$.

The values of $cost(ALG)$ and $cost(ADV)$ are exactly the same as in Lemma 7 of \cite{GoelLinWierman19}, leading to the conclusion that
\begin{align}
    CR_{ALG} &= \frac{cost(ALG)}{cost(OPT}\\
    &\geq \frac{cost(ALG)}{cost(ADV)} = \lim_{n,m' \to \infty} \frac{\frac{1}{2\left(1+\frac{1}{m'}\right)}}{\frac{a_n}{2}} = \frac{1}{2} + \frac{1}{2}\sqrt{1+\frac{4}{\mu_1}} = \frac{1}{2} + \frac{1}{2}\sqrt{1+\frac{4}{\min_i \mu_i}}
\end{align}
\end{proof}

\section{\lpccaps{} comparison and Numerical Experiments Set-up}
\subsection{Proof of Corollary \ref{corr: resource_utlization}}\label{proof: resource_comp}
We consider the most basic hitting costs to deal with, under our strongly convex assumptions: quadratic costs and, a static $\mathcal{D}$-regular graph. Then, \lpc{} requires each agent to invert a matrix of size $|\mathcal{N}_i^r| \cdot k \cdot d \times |\mathcal{N}_i^r| \cdot k \cdot d$. On the contrary, \acord{} needs to each agent to invert only a $d \times d$ matrix at most $\Theta\left(\mathcal{D}\log N\mathcal{D}^2 T^4\right)$ times. The former involves a complexity of $\Omega(d^3 |\mathcal{N}_i^r|^3 k^3)$ while the latter exhibits $\mathcal{O}(d^3 \mathcal{D}\log (N\mathcal{D}^2 T^4)) \approx \Tilde{\mathcal{O}}(d^3 \mathcal{D})$, as $T \ll d$ in the large scale applications we consider, like decentralized management of data centers and power grids. Even without future predictions ($k=1$),
\begin{align}
    \mathcal{D} < \mathcal{D}^3 \ll |\mathcal{N}_i^r|^3 = |\mathcal{N}^r(\mathcal{D})|^3
\end{align}
for large $\mathcal{D}$-regular networks. 

\subsection{Set-up for Numerical Experiments}\label{appendix_sec: num_exp_descp}
As previously stated, we considered one-dimensional action space and quadratic hitting costs $\alpha_t^i (x-v^i_t)^2$ for each agent $i \in \{1,\ldots,N\}$. For each agent $i$, the sequence of $\{\alpha_t^i\}_t$ is independently generated in the following manner:
\begin{align}
    \alpha_t^i = \begin{cases}
        U[0,1] & \text{ with prob. } p\\
        U[0,1] + 2^t & \text{ otherwise}
    \end{cases}
\end{align}
where we set $p = 0.7$. This creates a sequence of quadratics where the hitting costs are occasionally very steep, throwing off the online agent if it is looking for patterns. Further, the the minimizer sequence $\{v_t^i\}_t$ is also generated independently for each agent in the following manner:
\begin{align}
    v_t^i = \begin{cases}
        U[-10,10] & \text{ with prob. } $q$\\
        U[-10,10] + \epsilon \cdot 1.1^t & \text{ otherwise}
    \end{cases}
\end{align}
where we set $q = 0.9$ and $\epsilon$ is an independent Radamacher random variable ($-1$ or $1$ with equal probability). This again creates random spikes in the positions of the hitting cost in the action space, making it difficult to track for the online agent. 

The different network topologies considered across various plots in Section \ref{sec:num_exp} have been illustrated in Figure \ref{fig:graph_ill}.
\begin{figure}[]
    \centering
     \subfigure[$\mathcal{D}=2$]{%
        \includegraphics[width=0.3\linewidth]{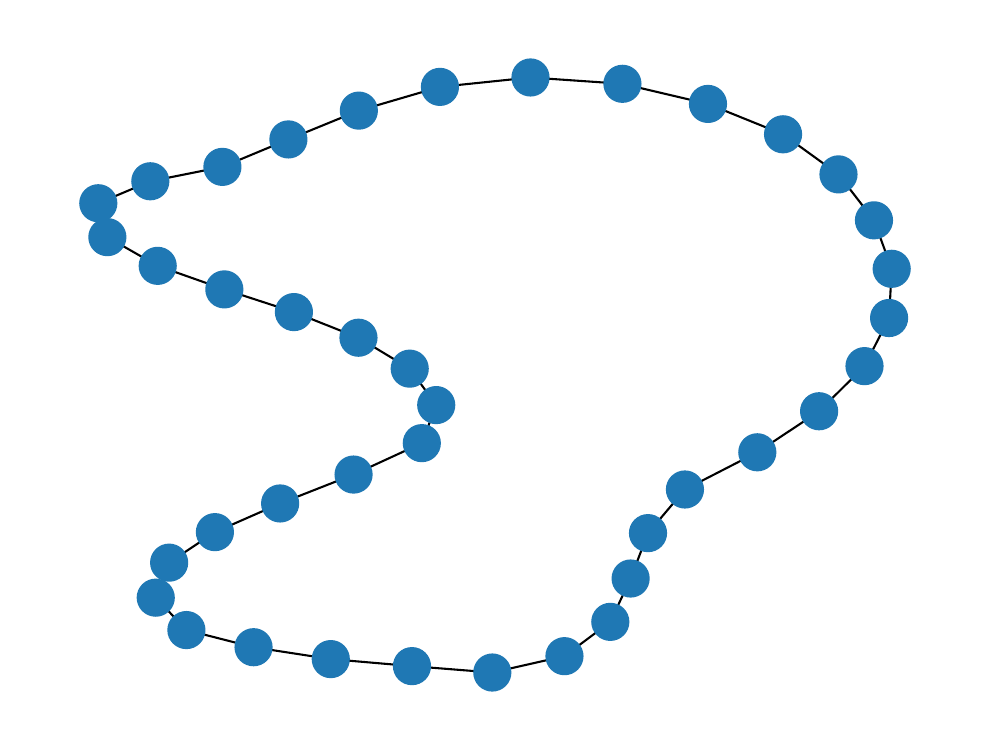}
    \label{fig:graph_ill_1}}
     \subfigure[$\mathcal{D}=10$]{
        \includegraphics[width=0.3\linewidth]{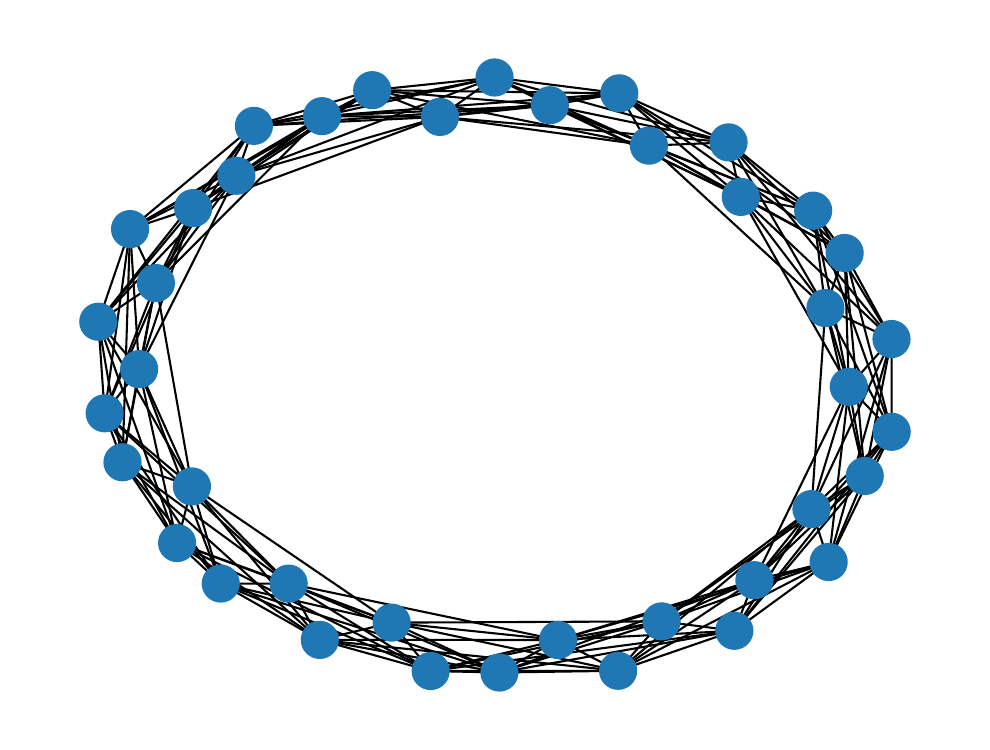}
        \label{fig:graph_ill_2}}
    \subfigure[$\mathcal{D}=20$]{
        \includegraphics[width=0.3\linewidth]{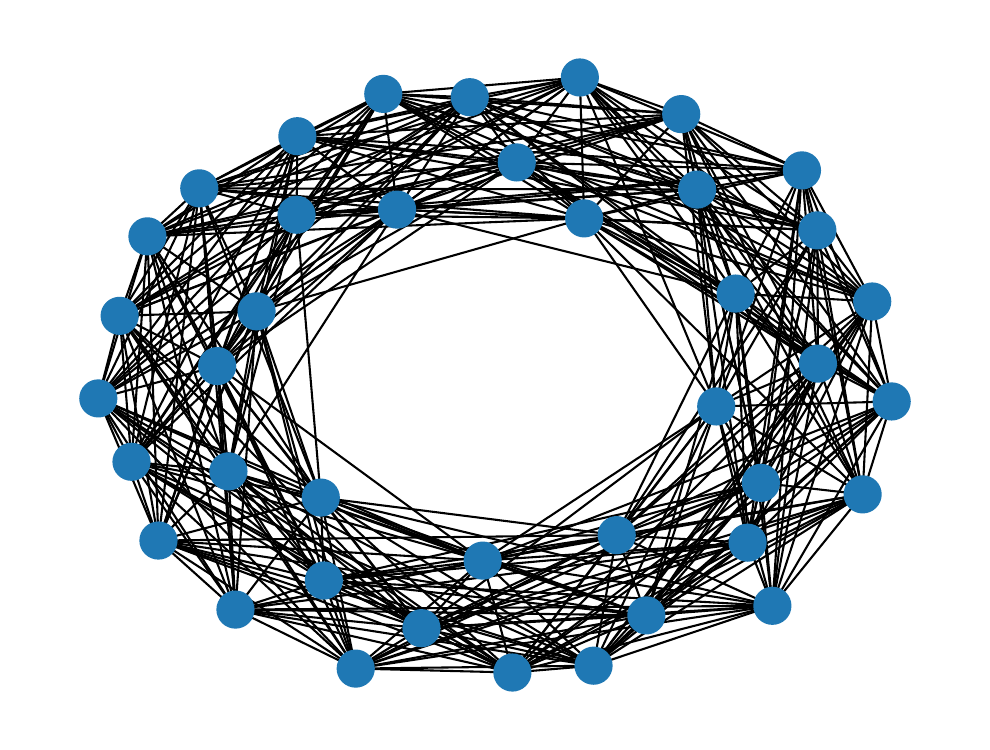}
        \label{fig:graph_ill_3}}
    \subfigure[$\mathcal{D}=30$]{
        \includegraphics[width=0.3\linewidth]{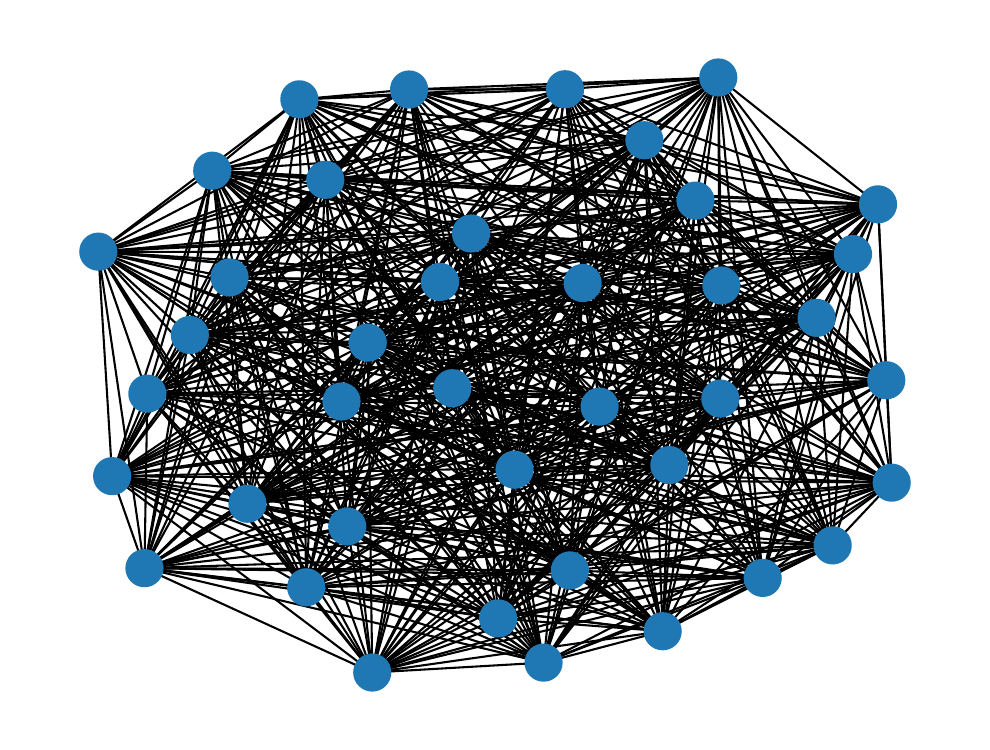}
        \label{fig:graph_ill_4}}
    \subfigure[$\mathcal{D}=39$]{
        \includegraphics[width=0.3\linewidth]{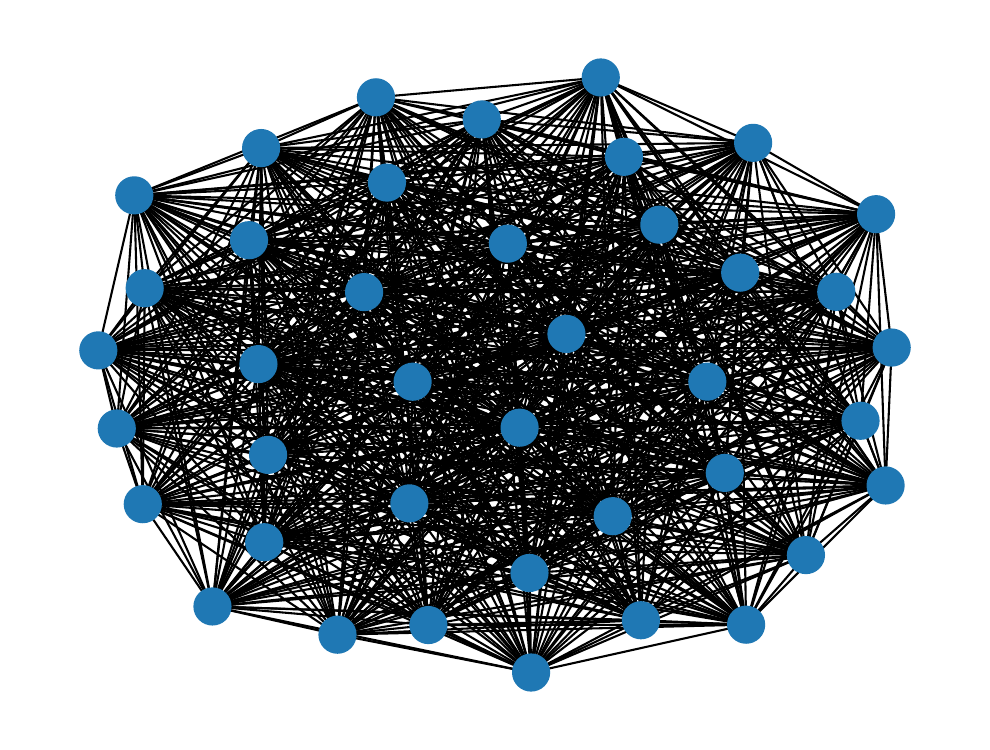}
        \label{fig:graph_ill_5}}
    \caption{Graph illustrations for $N=40$}
    \label{fig:graph_ill}
\end{figure}

Implementation of \acord{} is straightforward, especially, for quadratic hitting costs. This is because the local optimization step's optimality criteria for each agent is linear and can be vectorized together. Iteratively doing this with edge-averaging (step 2 in Algorithm \ref{alg:ACORD}'s inner loop) does the job. For \lpc{$(r)$} one needs to first calculate each agent's $r$-hop neighborhood by employing \textit{breadth-first-search}. Then for each agent, the neighborhood optimization problem involves a matrix inversion of size $|\mathcal{N}_i^r|\times |\mathcal{N}_i^r|$, which leads to the slower performance than \acord{} as $r$ increases.

The timing analysis is performed by averaging the clock-time difference between function calls to the \acord{} and \lpc{$(r)$} algorithms. Since \acord{}'s implementation is vectorized and involves inversion of an $N\times N$ diagonal matrix, the per-agent runt-time can be found by normalizing by $N$. \lpc{$(r)$}'s implementation involves a for-loop over the agents' action calculation, so the same normalization applies here too.

\section{Naive approaches to Dissimilarity Cost}\label{section: naive}
\begin{corollary}
    Suppose each agent follows a localized version of \robd{} with no regard of the dissimilarity cost. There exists uncountable instances where the competitive ratio is $\Omega(\beta)$.
\end{corollary}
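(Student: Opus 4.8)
The plan is to exhibit an explicit one-parameter family of instances, parametrised by a separation scale $r\in(0,\infty)$ and therefore uncountable, on which localized \robd{} provably pays a dissimilarity cost that is linear in both $\beta$ and $T$, whereas the offline optimum pays only $\Theta(T)$ with no dependence on $\beta$. Taking the ratio then forces a competitive ratio of at least $\beta/(2\mu)=\Omega(\beta)$ on each member of the family.

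Concretely, I would take $N=2$, $d=1$, a single static edge $(1,2)$ with $A^{(1,2)}_t\equiv 1$ (so $m=l=1$), a common strong-convexity parameter $\mu$, initial actions $x^i_0=0$, and time-invariant hitting costs $f^1_t(x)=\tfrac{\mu}{2}(x-r)^2$ and $f^2_t(x)=\tfrac{\mu}{2}(x+r)^2$. By definition, localized \robd{} ignores the dissimilarity term and has agent $i$ play $x^i_t=\argmin_x f^i_t(x)+\tfrac{\lambda_1}{2}(x-x^i_{t-1})^2$ with $\lambda_1=2/(1+\sqrt{1+4/\mu})$. For quadratic $f^i_t$ this is the linear recursion $x^i_t-v^i=\rho\,(x^i_{t-1}-v^i)$ with $v^1=r$, $v^2=-r$ and $\rho=\lambda_1/(\mu+\lambda_1)\in(0,1)$, so $x^i_t=v^i(1-\rho^t)$ and $|x^1_t-x^2_t|=2r(1-\rho^t)$. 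Choosing $t_0=t_0(\mu)$ with $\rho^{t_0}\le\tfrac12$ — a quantity independent of $\beta$, $r$ and $T$ — we get $|x^1_t-x^2_t|\ge r$ for every $t\ge t_0$.

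The two cost estimates are then elementary. Keeping only the dissimilarity term of the cost functional \eqref{eqn: cumulative_obj}, $\mathrm{Cost}_{\alg{}}[1,T]\ge\sum_{t=t_0}^{T}\tfrac{\beta}{2}(x^1_t-x^2_t)^2\ge\tfrac{\beta r^2}{2}(T-t_0+1)$. For the benchmark, the static feasible sequence $x^1_t=x^2_t=0$ for all $t$ incurs zero switching and zero dissimilarity cost, so $\mathrm{Cost}_{\opt{}}[1,T]\le\sum_{t=1}^{T}\big(f^1_t(0)+f^2_t(0)\big)=\mu r^2 T$. Plugging these into the definition of the asymptotic competitive ratio, any valid $\mathrm{CR}_{\alg{}}$ obeys $\mathrm{CR}_{\alg{}}\,\mu r^2 T+o(1)\ge\tfrac{\beta r^2}{2}(T-t_0+1)$; letting $T\to\infty$ gives $\mathrm{CR}_{\alg{}}\ge\beta/(2\mu)=\Omega(\beta)$. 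Since each $r\in(0,\infty)$ yields a distinct instance and every one of them forces this bound, the set of witnessing instances is uncountable.

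The only nontrivial point — hence the main obstacle — is pinning down the precise meaning of a localized version of \robd{} and verifying that its per-agent update is exactly the regularized descent step above, so that the closed-form trajectory $x^i_t=v^i(1-\rho^t)$ is legitimate; everything after that is routine algebra. A minor care-point is the additive $o(1)$ slack in the competitive-ratio definition, which is harmless here because both $\mathrm{Cost}_{\alg{}}$ and $\mathrm{Cost}_{\opt{}}$ grow linearly in $T$. If one wants the uncountable family to reflect genuine diversity rather than a rescaling, one may instead perturb the minimizers independently, e.g. taking $v^1\in(0,1)$ arbitrary and $v^2=-v^1$; the $\Omega(\beta)$ conclusion is unaffected.
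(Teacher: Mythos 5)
Your proof is correct, and it follows the same basic blueprint as the paper's: two agents on a single edge in $\R$, symmetric quadratic hitting costs whose minimizers are pulled apart, so that localized \robd{} (which ignores the coupling) tracks the separated minimizers and pays a per-round dissimilarity cost proportional to $\beta$, while a benchmark that keeps the two actions together pays a cost independent of $\beta$. The differences are in execution, and they favor your version. The paper uses drifting minimizers $v^1_t=t$, $v^2_t=-t$ and closes the argument somewhat informally by letting $\beta\to\infty$ and comparing \robd{}'s cost to that of \acord{} (as a proxy for \opt{}); it also never explicitly exhibits the uncountable family promised in the statement. You instead use a static instance with minimizers at $\pm r$, bound $\mathrm{Cost}_{\opt{}}$ from above by the explicit feasible sequence that sits at the origin, and obtain the clean quantitative bound $\mathrm{CR}\ge \beta/(2\mu)$ for every fixed $\beta$, with the parameter $r$ (or the perturbed variant you mention) supplying the uncountable family. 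You also correctly handle the $o(1)$ slack in the asymptotic competitive-ratio definition, which the paper's proof does not discuss. The one point you flag as a risk---the meaning of ``localized \robd{}''---is resolved exactly as you guessed: the paper's proof uses the per-agent update $x^i_t=\arg\min_x f^i_t(x)+\tfrac{\lambda_1}{2}(x-x^i_{t-1})^2$ with $\lambda_1=2/(1+\sqrt{1+4/\mu})$, so your closed-form trajectory is legitimate.
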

\begin{proof}
    Take two agents in $\R$ which are connected by one edge and $\beta = \beta$ $\forall$ $t$. Therefore, the dissimilarity cost is $\frac{\beta}{2}(x^1_t - x^2_t)^2$. Now, consider an instance with $f^1_t(x) = \frac{(x - v^1_t)^2}{2}$ and $f^2_t(x) = \frac{(x - v^2_t)^2}{2}$. If the two agents follow \robd{} individually and locally,
    \begin{align}
        x^1_t = \argmin_x \frac{(x - v^1_t)^2}{2} + \left(\frac{2}{1 + \sqrt{5}}\right) \frac{(x - x^1_{t-1})^2}{2}\\
        x^2_t = \argmin_x \frac{(x - v^2_t)^2}{2} + \left(\frac{2}{1 + \sqrt{5}}\right) \frac{(x - x^2_{t-1})^2}{2}
    \end{align}
    meaning,
    \begin{align}
        x^1_t = \frac{2 x^1_{t-1} + (1+\sqrt{5})v^1_t}{3+\sqrt{5}}\\
        x^2_t = \frac{2 x^2_{t-1} + (1+\sqrt{5})v^2_t}{3+\sqrt{5}}
    \end{align}
    and the dissimilarity cost is $$\frac{\beta}{2(3+\sqrt{5})^2}\left((1+\sqrt{5})(v^1_t - v^2_t) + 2 (x^1_{t-1} - x^2_{t-1}) \right)^2.$$
    Now consider the instance where $\{v^1_t\}_t = \N$ and $\{v^1_t\}_t = -\N$. This means $$x^1_t  > 0 > x^2_t \text{ } \forall \text{ } t$$ if start point is zero. This leads to the dissimilarity cost being at least
    \begin{align}
        \frac{2\beta (1+\sqrt{5})^2}{(3+\sqrt{5})^2}t^2 \propto \beta
    \end{align}
    at round $t$. Now, suppose we have $\beta \to \infty$. Then \\acord{} is forced to have actions across all agents same, leading to a cost independent of $\beta$ . Vanilla \robd{} doesn't take this into account and will have cost still linearly dependent on $\beta$, having a competitive ratio
    \begin{align}
        CR_{\robd{}} = \frac{cost(\robd{})}{cost(\opt{}))} \geq \frac{cost(\robd{})}{cost(\acord{})} &= \Omega(\beta).
    \end{align}
\end{proof}

\begin{corollary}
    Suppose $x^1_t = x^2_t = \ldots = x^N_t = x_t$ $\forall$ $t$, then there exist instances such that the competitive ratio blows up to $\infty$.
\end{corollary}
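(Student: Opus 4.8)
The plan is to exhibit a simple stationary two-agent instance on which \emph{any} algorithm that enforces $x^1_t = x^2_t = \cdots = x^N_t$ incurs cost $\Omega(T)$, while the unconstrained hindsight optimum incurs only $O(1)$, so that the ratio diverges with the horizon. Concretely, I would take $d=1$ and $N=2$ agents joined by a single edge with $A^{(1,2)}_t \equiv 1$, the time-invariant $\mu$-strongly convex hitting costs $f^1_t(x) = \tfrac{\mu}{2}(x-1)^2$ and $f^2_t(x) = \tfrac{\mu}{2}(x+1)^2$ for all $t \in [T]$, the standard convention $x^i_0 = \mathbf{0}$, and $\beta = 0$ (rescaling the two minimizers to $\pm c$ for any $c>0$ yields uncountably many such instances). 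Note $\beta = 0$ is permitted since $\beta \ge 0$ throughout.

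For the lower bound on the algorithm: once $x^1_t = x^2_t = x_t$ the dissimilarity term is irrelevant, but the combined hitting cost at round $t$ is $\tfrac{\mu}{2}(x_t-1)^2 + \tfrac{\mu}{2}(x_t+1)^2 = \mu x_t^2 + \mu \ge \mu$, so $\text{Cost}_{\alg{}}[1,T] \ge \mu T$ for every choice of $\{x_t\}_{t=1}^T$ (indeed for every choice of $x_0$ as well). For the upper bound on $\opt{}$: let agent $1$ move to $1$ and agent $2$ move to $-1$ at round $1$ and then hold position; this gives zero hitting cost every round, a one-time switching cost $\tfrac12 + \tfrac12 = 1$, and (since $\beta=0$) no dissimilarity cost, so $\text{Cost}_{\opt{}}[1,T] \le 1$. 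Hence $\text{Cost}_{\alg{}}[1,T]/\text{Cost}_{\opt{}}[1,T] \ge \mu T \to \infty$, and since the additive term in \eqref{eqn: CR_defn} is a fixed $o(1)$ quantity, no finite $\text{CR}_{\alg{}}$ can satisfy \eqref{eqn: CR_defn} for all $T$; i.e., the competitive ratio is $\infty$.

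There is essentially no technical obstacle; the only point requiring care is the role of $\beta$. If $\beta>0$ is fixed, a spread-out $\opt{}$ configuration must itself pay $\Omega(\beta T)$ in dissimilarity cost, and a short computation (optimizing the stationary $\opt{}$ positions against $\tfrac{\mu}{2}(x^1-1)^2 + \tfrac{\mu}{2}(x^2+1)^2 + \tfrac{\beta}{2}(x^1-x^2)^2$) shows this family only forces a \emph{finite} ratio of order $1 + \mu/\beta$. The genuine blow-up therefore comes from taking $\beta=0$ (as above), or, if one prefers to keep $\beta>0$, from letting $\beta \downarrow 0$ over the family of instances, exactly in the spirit of the preceding $\Omega(\beta)$ corollary. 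In either reading the stated conclusion --- that consensus-constrained play admits instances with unbounded competitive ratio --- follows immediately.
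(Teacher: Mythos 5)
Your proof is correct, but it takes a genuinely different route from the paper's. You fix the strong-convexity parameter $\mu$, set $\beta=0$, and let the horizon $T$ grow: the consensus-constrained player pays at least $\mu$ per round against the antipodal minimizers $\pm 1$, while the unconstrained optimum pays a one-time switching cost of $1$ and nothing else, so the ratio grows like $\mu T$. The paper instead fixes the horizon, keeps $\beta$ bounded but possibly positive, and sends the strong-convexity parameter to $\infty$ while keeping the agents' minimizers separated; it then compares the consensus player's per-round hitting cost (which scales with $\mu$) against the finite cost of simply following the local minimizers, so the ratio diverges as $\mu \to \infty$. The two constructions buy different things. Yours is fully explicit and quantitative, with a clean closed-form bound on $\text{Cost}_{\opt{}}$, and it correctly engages with the definition of asymptotic competitive ratio in \eqref{eqn: CR_defn} by noting that the additive $o(1)$ slack cannot absorb a $\Theta(T)$ gap. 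The paper's construction is more robust with respect to $\beta$: as you yourself observe, your family only forces a ratio of order $1+\mu/\beta$ when $\beta>0$ is held fixed, whereas taking $\mu\to\infty$ blows up the ratio for any bounded $\beta$. Your explicit computation of that $1+\mu/(2\beta)$ ceiling for the stationary spread-out configuration is a correct and worthwhile caveat that the paper does not spell out; it makes clear exactly which limit ($\beta\downarrow 0$, $T\to\infty$, or $\mu\to\infty$) is doing the work in each argument. Both arguments establish the stated conclusion.
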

\begin{proof}
Consider $N$ agents to be in $\R$. Take any hitting cost sequence $\left(\{\left\{ f_1^i(\cdot) \right\}_{i=1}^N, \{\left\{ f_2^i(\cdot) \right\}_{i=1}^N, \ldots, \{\left\{ f_T^i(\cdot) \right\}_{i=1}^N \right)$ with $\beta$ capped at some value. Now, the adversary can increase the strong-convexity parameter to $\infty$ for all hitting costs while keeping their minimizers $v^i_t$ far from each other. In such a scenario, having $x^1_t = x^2_t = \ldots = x^N_t$ leads to infinite hitting costs as the minimizers of each are away from each other. While a simple algorithm like following the (local) minimizer $v^i_t$ will have finite cost, making the competitive ratio $\infty$ for an algorithm enforcing zero dissimilarity cost.
\end{proof}

\section{Related Works}\label{appendix_sec: lit_review}
\subsection{Single-agent SOCO}
The existing research on smoothed online optimization primarily focuses on single-agent scenarios. Initial results, such as those by \cite{LinWiermanAndrew11, Bansal15}, addressed the problem in the one-dimensional setting. Subsequent studies, like \cite{ChenWierman18}, demonstrated that achieving meaningful results necessitates assumptions beyond convexity, such as $\alpha$-polyhedrality or $m$-strong convexity. In particular, \cite{ZhangYang21} showed that for $\alpha$-polyhedral hitting costs and $\ell_1$-norm switching costs, following the minimizer can achieve an impressive competitive ratio of $\left(\max\left\{ 1, \frac{2}{\alpha} \right\}\right)$. Additionally, \cite{GoelWierman19} introduced an order-optimal algorithm, \robd{}, for scenarios involving $m$-strongly convex hitting costs and $\ell_2$-norm switching costs, achieving a competitive ratio of $1 + \mathcal{O}\big(\frac{1}{\sqrt{m}}\big)$. The analytical methods used in these studies, such as potential function analysis and receding horizon control, are tailored to optimizing single-agent performance and do not leverage the additional structure in hitting costs that a multi-agent framework could offer.

\subsection{Multi-agent SOCO}
In the decentralized competitive algorithms domain, there are very few algorithms in the literature. The only ones in our setting of spatio-temporal costs are \cite{lin2022decentralized} and \cite{LiShaoleiWierman23}, with the latter one being a meta-process assuming the existence of a competitive decentralized algorithm. The Localized Predictive Control (LPC) algorithm presented in \cite{lin2022decentralized} lacks efficiency across various fronts. First, the Model Predictive Control (MPC) framework used to design the algorithm renders it dependent on perfect predictions of future hitting costs. Second, the algorithm requires agents to exchange infinite dimensional hitting costs with amongst each other over an $r$-hop neighborhood. Lastly, each agent is required to solve a high-dimensional optimization sub-problem each round that scales with the size of the neighborhood. All these issues prevent LPC from being scalable despite being designed as a decentralized algorithm.

As observed above, a notable limitation in the existing literature is the reliance on agents having access to partial information about the time-varying global loss functions, necessitating the exchange of local cost functions among neighboring agents.  To the best of our knowledge, this paper is the first to establish decentralized competitive guarantees without requiring function exchange between agents.

\subsection{Comparison to ``distributed'' online convex optimization}
Our study contributes to the expanding body of research on distributed online convex optimization (OCO) with time-varying cost functions across multi-agent networks. Recent progress in this field includes work on distributed OCO with delayed feedback \cite{XuanyuTamer21}, coordination of behaviors among agents \cite{XiuxianLihua21, XuanyuTamer21}, and approaches that address distributed OCO with a time-varying communication graph \cite{HosseiniChapman16, AkbariLinder15, YuanProutiere21, LiXie20, YiJohansson20}. However, the above works do not penalize agents for changing actions (switching costs) or for the lack of coordination (dissimilarity costs) and, hence, do not deal with the non-trivial spatio-temporal coupling that comes with it. Further, the above guarantees either use static regret or dynamic regret as the performance metric.

\subsection{Comparison to ``decentralized'' online learning}\label{appendix_sec: DOCO}
Contrary to Decentralized SOCO, there has been significant advancements in traditional decentralized online learning. However, the set-up considered in this line work is different than SOCO. The action space considered is bounded, that is only policies involving $\|x_t\| \leq M$ for some $M>0$ are considered. The gradients of the hitting costs are restricted to be bounded, that is, $\|\nabla f_t^i\|\leq G$. Further, these works do not study switching costs and its relationship with spatial dissimilarity costs. Finally, the benchmark usually considered is the static offline optimal instead of the dynamic hindsight optimal.

The latest in this line of work is \cite{wang2023distributed}, where the authors consider $G$-Lipshchitz hitting costs for each agent $i$ in a bounded action space $\mathcal{K}$ over a finite horizon $T$, where agents can communicate over a \textit{static} undirected graph $\mathcal{G}$. As is the case in traditional OCO, \textit{no switching costs} are considered and the performance metric is the \textit{static} regret. It is worthwhile to note that \cite{wang2023distributed} does not consider any form of spatial costs between neighboring agents.

Their proposed approach: Distributed Online Smooth Projection-Free
Algorithm (\dospa{}) combines the principles of Follow the Perturbed Leader (\ftpl{}) and gradient accumulation plus averaging over the neighborhood, to update the action sparingly. The idea is that one can divided the horizon into $\mathcal{O}(T^{1/2})$ partitions, playing the same "good" action through the partition and only updating when going into a new partition. \dospa{} achieves a static regret of $\mathcal{O}(T^{2/3})$ while communicating $O(T^{1/2})$ times through the horizon $T$. This guarantee is possible by exploiting three aspects of the environment: (i) Bounded space (ii) Lipschitz functions (iii) absence of spatial coupling costs.

Bounded space and Lipschitzness of the hitting costs allow the player to take a sub-optimal action throughout a partition and handle the accumulating error in one-shot. The error handling is further aided by the absence of spatial costs.

In contrast, our setting involves a much broader class of hitting costs, as we assume only strong-convexity, avoiding both zero-order (function level) and first-order (gradient level) Lipschitzness assumption. The presence of spatial costs in our setting requires decoupling the objective, a major hurdle that is not present in the above set-up. Lastly, our benchmark is the optimal sequence of actions in hindsight, a stronger yard-stick than static regret.

\end{appendices}

\end{document}